\documentclass{birkjour}

\usepackage{amsmath}
\usepackage{amssymb}
\usepackage{verbatim}   
\usepackage{color}    
\usepackage{mathrsfs,epsfig}  
\usepackage{fontenc}
\usepackage{textcomp}
\usepackage{fix-cm}
\usepackage{array}
\usepackage{enumerate}
\usepackage{latexsym, amsfonts, amssymb}
\usepackage{color}
\usepackage{comment}
\usepackage{subfigure}
\usepackage{accents}

\numberwithin{equation}{section}

\newtheorem{theorem}[equation]{Theorem}

\newtheorem{lemma}[equation]{Lemma}
\newtheorem{corollary}[equation]{Corollary}

\theoremstyle{definition}
\newtheorem{definition}[equation]{Definition}
\newtheorem{remark}[equation]{Remark}

\setlength\parindent{0pt}

\begin{document}
\fontsize{9}{11}\selectfont

\title[Generalized Inviscid Proudman-Johnson Equation]{The Role of Initial Curvature in Solutions to the Generalized Inviscid Proudman-Johnson Equation}

\author{Alejandro Sarria}
\address{%
Department of Mathematics\\
University of New Orleans\\
New Orleans, LA, 70148, USA}
\email{asarria1@uno.edu}

\author{Ralph Saxton}
\address{%
Department of Mathematics\\
University of New Orleans\\
New Orleans, LA, 70148, USA
}
\email{rsaxton@uno.edu}

\subjclass{35B44, 35B10, 35B65, 35Q35}

\keywords{Proudman-Johnson equation, blow-up.}

\begin{abstract}
In \cite{Sarria1}, we derived representation formulae for spatially periodic solutions to the generalized, inviscid Proudman-Johnson equation and studied their regularity for several classes of initial data. The purpose of this paper is to extend these results to larger classes of functions including those having arbitrary local curvature near particular points in the domain. 
\end{abstract}

\maketitle

\section{Introduction}\hfill
\label{sec:intro}

In this article, we extend the analysis initiated in \cite{Sarria1} concerning blow-up, and blow-up properties, in solutions to the initial boundary value problem for the generalized, inviscid Proudman-Johnson equation (\cite{Proudman1}, \cite{Childress}, \cite{Okamoto1})
\begin{equation}
\label{eq:nonhomo}
\begin{cases}
u_{xt}+uu_{xx}-\lambda u_x^2=I(t),\,\,\,\,t>0,
\\
u(x,0)=u_0(x),\,\,\,\,\,\,\,\,\,\,\,\,\,\,\,\,\,\,\,\,\,\,\,\,\,\,\,x\in[0,1],
\\
I(t)=-(\lambda+1)\int_0^1{u_x^2\,dx},
\end{cases}
\end{equation}
where $\lambda\in\mathbb{R}$ and solutions are subject to periodic boundary conditions
\begin{equation}
\label{eq:pbc}
\begin{split}
u(0,t)=u(1,t),\,\,\,\,\,\,u_x(0,t)=u_x(1,t).
\end{split}
\end{equation} 

We note that the equation arises in several important applications, in the presence or absence of the nonlocal term $I(t)$. For $\lambda=-1,$ (\ref{eq:nonhomo})i), iii) reduces to the inviscid Burgers' equation of gas dynamics differentiated once in space. If $\lambda=-1/2,$ the Hunter Saxton equation (HS) describes the orientation of waves in a massive director field of a nematic liquid crystal (\cite{Hunter1}, \cite{Bressan1}, \cite{Dafermos1}, \cite{Yin1}). For periodic functions, the HS equation also has a deep geometric meaning as it describes geodesics on a group of orientation preserving diffeomorphisms on the unit circle modulo rigid rotations with respect to a right-invariant metric (\cite{Khesin1}, \cite{Bressan1}, \cite{Tiglay1}, \cite{Lenells1}). If $\lambda=\frac{1}{n-1},\,n\geq2,$ (\ref{eq:nonhomo}) i), iii) can be obtained directly from the $n-$dimensional incompressible Euler equations 

$$\boldsymbol{u}_t+(\boldsymbol{u}\cdot\nabla)\boldsymbol{u}=-\nabla p,\,\,\,\,\,\,\,\,\nabla\cdot \boldsymbol{u}=0$$ 

using stagnation point-form velocities $\boldsymbol{u}(x,\boldsymbol{x}^\prime,t)=(u(x,t),-\lambda\boldsymbol{x}^\prime u_x(x,t))$,\, $\boldsymbol{x}^\prime=\{x_2,...,x_n\},$ or through the cylindrical coordinate representation  $u^r=-\lambda ru_x(x,t),$ $u^{\theta}=0$ and $u^x=u(x,t)$, where $r=\left|\boldsymbol{x}^\prime\right|,$ (\cite{Childress}, \cite{Weyl1}, \cite{Saxton1}, \cite{Okamoto1}, \cite{Escher1}). Finally, in the local case $I(t)=0$, the equation appears as a special case of Calogero's equation 
$$u_{xt}+uu_{xx}-\Phi(u_x)=0$$ 
for arbitrary functions $\Phi(\cdot)$ (\cite{Calogero1}). 

In \cite{Sarria1} we derived representation formulae for periodic solutions to (\ref{eq:nonhomo})-(\ref{eq:pbc}) and, for several classes of mean-zero initial data, examined their $L^p$ regularity for $p\in[1,+\infty]$. For convenience of the reader, the main results established in \cite{Sarria1} are summarized in Theorems \ref{thm:sarria1}-\ref{thm:sarria2} below.

\begin{theorem}
\label{thm:sarria1} 
Consider the initial boundary value problem (\ref{eq:nonhomo})-(\ref{eq:pbc}). There exist smooth, mean zero initial data such that: 
\begin{enumerate}
\item\label{it:1} For $\lambda\in(-\infty,-2]\cup(1,+\infty)$, there is a finite $t_*>0$ such that $\lim_{t\uparrow t_*}\left|u_x(x,t)\right|=+\infty$ for every $x\in[0,1]$. Additionally, the blow-up is two-sided (two-sided, everywhere blow-up). 
\item\label{it:2} For $\lambda\in(-2,0),$ there is a finite time $t_*>0$ and a finite number of $\underline x_j\in[0,1]$, $j\in\mathbb{N}$, such that $\lim_{t\uparrow t_*}u_x(\underline x_j,t)=-\infty$ (one-sided, discrete blow-up). 
\item\label{it:3} For $\lambda\in[0,1],$ solutions persist globally in time. More particularly, these vanish as $t\uparrow t_*=+\infty$ for $\lambda\in(0,1)$ but converge to a nontrivial steady state for $\lambda=1.$
\end{enumerate}
\end{theorem}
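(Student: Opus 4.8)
The plan is to recast (\ref{eq:nonhomo})--(\ref{eq:pbc}) as a one-parameter family of scalar problems along characteristics, collapse these into a single closed representation formula for $u_x$, and then read off all three statements from the asymptotics of one integral functional of $u_0'$ as a scalar ``time-like'' variable runs to the edge of its range.

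First I would pass to Lagrangian coordinates: let $\gamma(\cdot,t)$ be the flow of $u$ ($\dot\gamma=u(\gamma,t)$, $\gamma(x,0)=x$). Differentiating (\ref{eq:nonhomo})i) in $x$ and composing with $\gamma$ shows that $v:=u_x\circ\gamma$ satisfies the Riccati equation $\dot v=\lambda v^2+I(t)$, $v(x,0)=u_0'(x)$, while $\dot\gamma_x=v\,\gamma_x$ gives $\gamma_x(x,t)=\exp\!\big(\int_0^t v\,ds\big)>0$; the periodic conditions force $\gamma(0,t)=0,\ \gamma(1,t)=1$, hence the constraint $\int_0^1\gamma_x\,dx\equiv1$. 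For $\lambda\neq0$ the substitution $\gamma_x=\phi^{-1/\lambda}$ linearises the Riccati equation to $\ddot\phi=-\lambda I(t)\phi$ with $\phi(x,0)=1$, $\dot\phi(x,0)=-\lambda u_0'(x)$, so $\phi(x,t)=a(t)+b(t)u_0'(x)$ is affine in $u_0'$; since the Wronskian $a\dot b-\dot a b\equiv-\lambda$ is constant, $r:=b/a$ obeys $\dot r=-\lambda/a^2$, $r(0)=0$. Setting $\bar K(r):=\int_0^1(1+r\,u_0'(x))^{-1/\lambda}\,dx$, the constraint $\int\gamma_x=1$ yields $a=\bar K(r)^{\lambda}$, hence the closed scalar problem $\dot r=-\lambda\,\bar K(r)^{-2\lambda}$, $r(0)=0$, together with the representation formulae
\[
\gamma_x(x,t)=\frac{(1+r\,u_0'(x))^{-1/\lambda}}{\bar K(r)},
\]
\[
u_x(\gamma(x,t),t)=\bar K(r)^{-2\lambda}\left[\frac{u_0'(x)}{1+r\,u_0'(x)}-\frac{1}{\bar K(r)}\int_0^1\frac{u_0'(y)\,dy}{(1+r\,u_0'(y))^{\frac1\lambda+1}}\right],
\]
with $\lambda=0$ treated analogously via $\gamma_x=e^{\phi}$, $\ddot\phi=-I(t)$. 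Thus the whole problem is reduced to a scalar ODE driven by $\bar K$.

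The dichotomy then comes from tracking $r(t)$: with $m=\min u_0'<0<M=\max u_0'$ one has $1+ru_0'>0$ exactly for $r\in(-1/M,-1/m)$, and $r$ moves monotonically toward $r_*=-1/M$ if $\lambda>0$ and toward $r_*=-1/m$ if $\lambda<0$. Choosing $u_0'$ whose extrema vanish to a prescribed even order $n$, near an extremal point $1+ru_0'$ is comparable to a constant times $(x-\bar x)^n+\sigma^2$ with $\sigma^2\sim|r_*-r|$, so each local contribution to $\bar K$, to $\bar K'$, and to the bracketed integral reduces to an explicit $\int(z^n+1)^{-p}\,dz$. One then checks: for $\lambda\in(-\infty,-2]\cup(1,\infty)$ suitable data make $t_*=\int_0^{r_*}dr/(-\lambda\bar K(r)^{-2\lambda})$ finite while, as $t\uparrow t_*$, the local term $u_0'/(1+ru_0')$ drives $u_x\circ\gamma\to+\infty$ at the extremizers and the divergence of $\bar K'/\bar K$ drives it to $-\infty$ elsewhere (two-sided blow-up), which transfers to every Eulerian $x$ once one checks that the ``wrong-sign'' boundary layer has vanishing image under the still-surjective map $\gamma(\cdot,t_*)$; for $\lambda\in(-2,0)$ the exponent $-1/\lambda>0$ keeps $\bar K(r_*)$, $\bar K'(r_*)$ and $t_*$ all finite (quadratic data suffice, since $n(1-1/|\lambda|)<1$), so only the local term survives and blow-up is one-sided, confined to the finitely many minimizers $\underline x_j$ of $u_0'$; for $\lambda\in[0,1)$ the exponent $-1/\lambda\le-1$ makes $(1+ru_0')^{-1/\lambda}$ non-integrable as $r\to r_*$, forcing $\bar K(r)\to+\infty$ for \emph{every} non-constant $u_0'$ and $t_*=+\infty$, and matching $\bar K^{-2\lambda}$ against the bracket gives $u_x\sim(\mathrm{const})\,|r_*-r|^{\,1-\lambda}\to0$; at $\lambda=1$ these rates cancel exactly, producing convergence to a nontrivial steady state.

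The hard part will be Step~3's asymptotics: one must pin down the exact blow-up rates of the competing functionals $\bar K(r)$ and $\int u_0'(1+ru_0')^{-1/\lambda-1}\,dy$ (and of $\bar K'$) as $r\to r_*$ and show that their leading coefficients combine with the correct sign — this is where the elementary comparison $\int(z^n+1)^{-1/\lambda-1}<\int(z^n+1)^{-1/\lambda}$ forces the extremal-point limit to be $+\infty$ rather than $-\infty$ — and then, for the ``everywhere'' assertions, upgrade the Lagrangian blow-up to an Eulerian one by controlling the degeneration of $\gamma(\cdot,t)$. Isolating exactly which admissible $u_0'$ (which order $n$, which extremal configuration) realizes each regime, especially at the borderline exponents $\lambda=1,2$, is the delicate technical point — and is precisely the issue that the refinement in the present paper addresses.
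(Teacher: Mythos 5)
Your proposal is correct and follows essentially the same route as the paper (which defers the detailed proof to \cite{Sarria1} but reproduces the method in \S\ref{sec:preliminaries}--\S\ref{sec:blowup}): your substitution $\gamma_x=\phi^{-1/\lambda}$ with $\phi$ affine in $u_0'$ is exactly the paper's $\omega=\gamma_\alpha^{-\lambda}$ and (\ref{eq:compat}), your variable $r$ is $-\lambda\eta(t)$ with the same closed scalar ODE (\ref{eq:etaivp}), your representation formula is (\ref{eq:finalsolu}), and your asymptotics of $\bar K$, $\bar K'$ near $r_*$ via the local expansion $(z^n+1)^{-p}$ at the extremizers is the $q=2$ (order-one zero of $u_0''$) case of Lemma \ref{lem:general}, with the sign comparison you isolate being the paper's identity $C_4/C_3=1-\lambda/q\in(0,1)$. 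The parameter thresholds you read off ($\lambda=1$ from finiteness of $t_*=\int\bar K^{2\lambda}\,d\eta$, $\lambda=-2$ from integrability of $(1+ru_0')^{-1-1/\lambda}$ at a quadratic minimum) match the paper's, so the sketch is sound.
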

For $t_*>0$ as in Theorem \ref{thm:sarria1} above, Theorem \ref{thm:lpintro} below examines $L^p(0,1)$ regularity of $u_x$ for $t\in[0,t_*)$ and $p\in[1,+\infty)$.

\begin{theorem}
\label{thm:lpintro}
Let $u$ in Theorem \ref{thm:sarria1} be a solution to the initial boundary value problem (\ref{eq:nonhomo})-(\ref{eq:pbc}) defined for $t\in[0,t_*)$. Then
\begin{enumerate}
\item\label{it:lp2} For $p\geq1$ and $\frac{2}{1-2p}<\lambda\leq1,\, \lim_{t\uparrow t_*}\left\|u_x\right\|_p<+\infty.$
\item\label{it:lp1} For $p\in(1,+\infty)$ and $\lambda\in(-\infty,-2/p]\cup(1,+\infty)$,\, $\lim_{t\uparrow t_*}\left\|u_x\right\|_p=+\infty$. 
\item\label{it:ener} The energy $E(t)=\left\|u_x\right\|_2^2$ diverges if $\lambda\in\mathbb{R}\backslash(-2/3,1]$ as $t\uparrow t_*$ but remains finite for $t\in[0,t_*]$ otherwise. Moreover, $\dot E(t)$ blows up to $+\infty$ as $t\uparrow t_*$ when $\lambda\in\mathbb{R}\backslash[-1/2,1]$ and $\dot E(t)\equiv0$ for $\lambda=-1/2$; whereas, $\lim_{t\uparrow t_*}\dot E(t)=-\infty$ if $\lambda\in(-1/2,-2/5]$ but remains bounded when $\lambda\in(-2/5,1]$ for all $t\in[0,t_*]$. 
\end{enumerate} 
\end{theorem}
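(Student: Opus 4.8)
The plan is to work from the representation formulae of \cite{Sarria1}. Let $\gamma=\gamma(x,t)$ be the flow of $u$, i.e. $\dot\gamma=u(\gamma,t)$, $\gamma(x,0)=x$, and set $\eta=\gamma_x>0$. From (\ref{eq:nonhomo})i) one has $\tfrac{d}{dt}\bigl(u_x\circ\gamma\bigr)=\lambda(u_x\circ\gamma)^2+I(t)$ and $\dot\eta=(u_x\circ\gamma)\,\eta$, so the function $\phi:=\eta^{-\lambda}$ solves the \emph{linear} equation $\ddot\phi=-\lambda I(t)\phi$ with $\phi(x,0)=1,\ \dot\phi(x,0)=-\lambda u_0'(x)$. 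Hence $\phi(x,t)=\phi_1(t)-\lambda u_0'(x)\phi_2(t)$ is affine in $u_0'(x)$ with explicit time coefficients, $u_x\circ\gamma=-\tfrac1\lambda\,\dot\phi/\phi$, $\eta=\phi^{-1/\lambda}$, and the nonlocal term (equivalently $\phi_1,\phi_2$) is pinned down by the normalization $\int_0^1\eta\,dx\equiv1$. This is the form of the solution obtained in \cite{Sarria1}, and $t_*$ of Theorem \ref{thm:sarria1} is precisely the first time $\phi(\cdot,t)$ vanishes somewhere: everywhere on $[0,1]$ when $\lambda\le-2$ or $\lambda>1$, and only at the finitely many $\underline x_j$ when $-2<\lambda<0$; for $0\le\lambda\le1$ there is no blow-up and the boundedness in (i) follows at once from the decay/convergence of $u_x$ in Theorem \ref{thm:sarria1}(3), so from here we assume blow-up occurs.

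Changing variables $x\mapsto\gamma(x,t)$ and using periodicity,
\[
\|u_x\|_p^p=\int_0^1|u_x\circ\gamma|^p\,\gamma_x\,dx=\frac{1}{|\lambda|^p}\int_0^1|\dot\phi(x,t)|^{p}\,\phi(x,t)^{-(p+1/\lambda)}\,dx ,
\]
so the problem reduces to the asymptotics, as $t\uparrow t_*$, of a singular integral whose denominator $\phi(x,t)$ degenerates at the blow-up location(s). Near a zero $\underline x$ of $\phi(\cdot,t_*)$, use the nondegenerate extremum of $u_0'$ there to write $u_0'(x)=u_0'(\underline x)+\tfrac12u_0''(\underline x)(x-\underline x)^2+O((x-\underline x)^3)$, whence $\phi(x,t)=c_1(t)+c_2(t)(x-\underline x)^2+\cdots$ with $c_1(t)=\phi(\underline x,t)\to0^+$ and $\dot\phi$ bounded away from $0$ near $(\underline x,t_*)$. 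Up to a bounded remainder, $\|u_x\|_p^p$ is then comparable to $\int_{|x-\underline x|<\delta}\bigl(c_1(t)+c_2(t)(x-\underline x)^2\bigr)^{-(p+1/\lambda)}\,dx$; rescaling $x-\underline x=\sqrt{c_1(t)}\,s$ gives a constant multiple of $c_1(t)^{\,1/2-(p+1/\lambda)}\int_{\mathbb R}(1+s^2)^{-(p+1/\lambda)}\,ds$ when $p+1/\lambda>1/2$ (the integral then converging), so $\|u_x\|_p\to+\infty$; while for $p+1/\lambda<1/2$ the integral converges to the finite value $\int_{|x-\underline x|<\delta}\bigl(c_2(t_*)(x-\underline x)^2\bigr)^{-(p+1/\lambda)}\,dx$, so $\|u_x\|_p$ stays bounded (with a logarithm at $p+1/\lambda=1/2$). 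Since $\lambda<0$ on the pertinent range, $p+1/\lambda<1/2\iff\lambda>\tfrac2{1-2p}$, which together with $0\le\lambda\le1$ yields (i); and $\lambda\le-2/p$ (hence $p+1/\lambda\ge p/2>1/2$ for $p>1$) or $\lambda>1$ forces divergence, giving (ii). The cleaner threshold $\lambda=\tfrac2{1-2p}$ does not appear in (ii) because, on the intermediate window $\tfrac2{1-2p}<\lambda\le-2/p$, divergence is sensitive to the precise behaviour of $\dot\phi$ at $\underline x$ and to the exact vanishing rate of $c_1(t)$ — exactly the information the curvature analysis of the later sections supplies.

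For (iii), first derive $\dot E(t)=(2\lambda+1)\int_0^1u_x^3\,dx$ by multiplying (\ref{eq:nonhomo})i) by $2u_x$, integrating over $[0,1]$, and using $\int_0^1 u\,u_xu_{xx}\,dx=-\tfrac12\int_0^1u_x^3\,dx$ together with $\int_0^1u_x\,dx=0$. The divergence statement for $E=\|u_x\|_2^2$ is the $p=2$ instance of the previous paragraph, with threshold $\lambda=\tfrac2{1-4}=-\tfrac23$, so $E\to+\infty$ iff $\lambda\notin(-2/3,1]$. For $\dot E$, the same model integral with exponent $3+1/\lambda$ shows $\int_0^1u_x^3\,dx$ diverges (in absolute value) iff $3+1/\lambda>1/2$, i.e. $\lambda\notin(-2/5,1]$, with a definite sign near $t_*$ equal to that of $u_x\circ\gamma$ at the blow-up point — negative in the one-sided case and, in the two-sided cases, negative for $\lambda\le-2$ and positive for $\lambda>1$. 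Multiplying by the prefactor $2\lambda+1$, which vanishes at $\lambda=-1/2$ and is negative for $\lambda<-1/2$, then gives $\dot E\to+\infty$ for $\lambda\notin[-1/2,1]$, $\dot E\equiv0$ at $\lambda=-1/2$, $\dot E\to-\infty$ for $\lambda\in(-1/2,-2/5]$, and $\dot E$ bounded on $(-2/5,1]$.

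The main obstacle is the sharp description of $\phi_1,\phi_2$ (equivalently of $c_1(t)$, $c_2(t)$ and $\dot\phi$) as $t\uparrow t_*$: one must combine the normalization $\int_0^1\eta\,dx\equiv1$ with the asymptotics of the solutions of $\ddot\phi=-\lambda I\phi$ to show that $c_1(t)\to0$ at the expected (essentially linear) rate and that $\dot\phi(\underline x,t)$ stays bounded away from $0$. This is delicate precisely on the ranges $\lambda\le-2/3$ or $\lambda>1$, where $I(t)\to-\infty$ and the linear ODE has unbounded coefficients, so the blow-up of $u_x$ is driven as much by the growth of the time coefficients as by the vanishing of $c_1$. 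A secondary difficulty, present even on the well-behaved ranges, is to verify that the part of the integral away from the blow-up point(s) is genuinely lower order — and, in the everywhere-blow-up cases, that the local expansions are uniform in $x$ — together with the separate treatment of the borderline exponents $p+1/\lambda=1/2$ and $3+1/\lambda=1/2$.
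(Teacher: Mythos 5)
Your proposal follows essentially the same route as the paper's (the paper itself only cites \cite{Sarria1} for this theorem, but the machinery it reproduces in \S\ref{sec:preliminaries}--\S\ref{sec:blowup} specialised to $q=2$, i.e.\ Lemma \ref{lem:general} with a nondegenerate quadratic maximum/minimum of $u_0'$, the bounds (\ref{eq:upper})--(\ref{eq:lower}), and the identities (\ref{eq:energy})--(\ref{eq:derenergy}), is exactly your argument): the linear equation for $\gamma_\alpha^{-\lambda}$, the Lagrangian change of variables, the local expansion $u_0'(\alpha)\sim m_0+\tfrac12 u_0''(\underline\alpha)(\alpha-\underline\alpha)^2$, the model integral $\int(c_1+c_2 s^2)^{-b}\,ds$ with threshold $b=1/2$, and $\dot E=(1+2\lambda)\int u_x^3$. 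All your thresholds ($\lambda=\tfrac{2}{1-2p}$, $-2/3$, $-2/5$, $-1/2$) are the correct specialisations.

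Two remarks. First, the ``main obstacle'' you identify --- sharp asymptotics of $\phi_1,\phi_2$ from the ODE $\ddot\phi=-\lambda I\phi$ with unbounded coefficient --- dissolves under the paper's normalisation: writing $\omega=\phi_1(t)\bigl(1-\lambda\eta(t)u_0'(\alpha)\bigr)$ with $\eta=\int_0^t\phi_1^{-2}$, the factor $\phi_1$ cancels identically in $\gamma_\alpha=\mathcal K_0/\bar{\mathcal K}_0$ and hence in (\ref{eq:mainsolu}) and in $\|u_x\|_p$, so the only small quantity is $\mathcal J(\underline\alpha,t)=1-\lambda\eta m_0$, which vanishes \emph{linearly in $\eta$ by definition}; the ODE analysis is replaced by the explicit data integrals $\bar{\mathcal K}_i$ and the relation $dt=\bar{\mathcal K}_0^{-2\lambda}d\eta$, the latter needed only to check $t_*<\infty$. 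Second, your step ``$\lambda>1$ forces divergence'' in (ii) is incomplete as written: for $\lambda\in(1,2)$ one has $\bar{\mathcal K}_0\sim C\,\mathcal J(\overline\alpha,t)^{\frac12-\frac1\lambda}\to+\infty$, so the prefactor $\bar{\mathcal K}_0^{-(1+2\lambda p)}$ in $\|u_x\|_p^p$ tends to zero and competes with the diverging local integral; the net exponent is negative only for $p$ sufficiently close to $1$, and one must then invoke $L^s\hookrightarrow L^p$ on $[0,1]$ (as the paper does via (\ref{eq:embedding})) to propagate divergence to all $p>1$. The same device is what closes the two-sided cases $\lambda\le-2$. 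With those two points supplied, your outline matches the paper's proof.
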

See \S\ref{sec:preliminaries} for details on the class of initial data used to establish Theorems \ref{thm:sarria1} and \ref{thm:lpintro}. Lastly, let $PC_{\mathbb{R}}(0,1)$ denote the family of piecewise constant functions with zero mean in $[0,1]$. Then, in \cite{Sarria1} we proved the following:

\begin{theorem}
\label{thm:sarria2}
For the initial boundary value problem (\ref{eq:nonhomo})-(\ref{eq:pbc}),
\begin{enumerate}

\item\label{it:4} Suppose $u_0''(x)\in PC_{\mathbb{R}}(0,1)$ and $\lambda>1/2$. Then, there exist solutions and a finite $t_*>0$ for which $u_x$ undergoes a two-sided, everywhere blow-up as $t\uparrow t_*$. If $\lambda<0,$ a one-sided discrete blow-up may occur instead. In contrast, for $\lambda\in[0,1/2]$, solutions may persist globally in time. More particularly, these either vanish as $t\uparrow t_*=+\infty$ if $\lambda\in(0,1/2)$, or converge to a nontrivial steady-state for $\lambda=1/2$.
	
\item\label{it:5} Suppose $u_0'(x)\in PC_{\mathbb{R}}(0,1)$ and assume solutions are defined for all $t\in[0,T],\, T>0.$ Then no  $W_{\mathbb{R}}^{1,\infty}(0,1)$ solution may exist for $T\geq t_*$, where $0<t_*<+\infty$ if $\lambda<0$, and $t_*=+\infty$ for $\lambda\geq0$. Further, $\lim_{t\uparrow t_*}\left\|u_x\right\|_1=+\infty$ when $\lambda<-1$, while 
\begin{equation*}
\lim_{t\uparrow t_*}\left\|u_x\right\|_p=
\begin{cases}
C,\,\,\,\,\,\,\,\,&-\frac{1}{p}\leq\lambda<0,\,\,\,\,\,\,\,\,\,p\geq1,
\\
+\infty,\,\,\,\,\,\,\,\,&-1\leq\lambda<-\frac{1}{p},\,\,\,\,p>1,
\end{cases}
\end{equation*}
where the constants $C\in\mathbb{R}^+$ depend on the choice of $\lambda$ and $p.$
\end{enumerate}
\end{theorem}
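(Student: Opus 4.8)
The plan is to pass to Lagrangian coordinates and reduce (\ref{eq:nonhomo})-(\ref{eq:pbc}) to a scalar, autonomous ODE, following \cite{Sarria1}. Let $\eta(\alpha,t)$ be the flow of $u$, $\eta_t=u(\eta,t)$, $\eta(\alpha,0)=\alpha$; by (\ref{eq:pbc}) each $\eta(\cdot,t)$ is an increasing bijection of $[0,1]$ with $\int_0^1\eta_\alpha\,d\alpha\equiv1$, and $\omega(\alpha,t):=u_x(\eta(\alpha,t),t)$ satisfies $\dot\omega=\lambda\omega^2+I(t)$ together with $\partial_t\eta_\alpha=\omega\,\eta_\alpha$ along characteristics. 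The substitution $\phi:=\eta_\alpha^{-\lambda}$ (and $\phi:=\ln\eta_\alpha$ if $\lambda=0$) linearizes this to $\ddot\phi=-\lambda I(t)\phi$; since $I$ does not depend on $\alpha$, $\phi$ is affine in $u_0'$, and imposing $\int\eta_\alpha\,d\alpha=1$ together with $I=-(\lambda+1)\int\omega^2\eta_\alpha\,d\alpha$ closes the system and yields the representation formulae of \cite{Sarria1},
\[
\eta_\alpha(\alpha,t)=\frac{\bigl(1-\lambda u_0'(\alpha)\,\mathcal J(t)\bigr)^{-1/\lambda}}{\bar K(\mathcal J(t))},\qquad
\bar K(j):=\int_0^1\bigl(1-\lambda u_0'(\beta)\,j\bigr)^{-1/\lambda}\,d\beta,
\]
\[
\omega(\alpha,t)=\frac{u_0'(\alpha)\,\dot{\mathcal J}(t)}{1-\lambda u_0'(\alpha)\,\mathcal J(t)}-\dot{\mathcal J}(t)\,\frac{\bar K'(\mathcal J(t))}{\bar K(\mathcal J(t))},\qquad
\dot{\mathcal J}=\bar K(\mathcal J)^{-2\lambda},\ \ \mathcal J(0)=0,
\]
with $\mathcal J$ increasing. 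Thus everything is governed by $\mathcal J$ and by the first vanishing of $1-\lambda u_0'(\alpha)j$ as $j$ grows: this occurs at $j_*:=(\lambda\sup u_0')^{-1}>0$ if $\lambda>0$, at $j_*:=(\lambda\inf u_0')^{-1}>0$ if $\lambda<0$, and breakdown happens, if at all, at time $t_*=\int_0^{j_*}\bar K(j)^{2\lambda}\,dj\in(0,+\infty]$.

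Consider assertion (\ref{it:5}), $u_0'\in PC_{\mathbb{R}}(0,1)$: now $u_0'$ is a mean-zero step function, so $\bar K(j)=\sum_i|J_i|\,(1-\lambda a_i\,j)^{-1/\lambda}$ is a finite sum, where $u_0'\equiv a_i$ on $J_i$. If $\lambda\ge0$ one finds $\bar K(j)\to+\infty$ like $(j_*-j)^{-1/\lambda}$ as $j\uparrow j_*$, whence $t_*\asymp\int^{j_*}(j_*-j)^{-2}\,dj=+\infty$; so $\mathcal J(t)\uparrow j_*$ only as $t\to\infty$, $\omega$ stays bounded on every $[0,T]$, and no $W^{1,\infty}$ breakdown occurs (the case $\lambda=0$ uses the $\ln\eta_\alpha$ version). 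If $\lambda<0$ then $\bar K$ is bounded above and below by positive constants on $[0,j_*]$, so $t_*<\infty$; on the union $S$ of the intervals where $u_0'=\inf u_0'=:m$ one has $g(t):=1-\lambda m\,\mathcal J(t)\downarrow0$ as $t\uparrow t_*$ and $\omega=\tfrac{m\dot{\mathcal J}}{g}-\dot{\mathcal J}\tfrac{\bar K'}{\bar K}\to-\infty$, so no $W^{1,\infty}$ solution survives to $t_*$. For the $L^p$ claim, changing variables gives $\|u_x(\cdot,t)\|_p^p=\int_0^1|\omega|^p\eta_\alpha\,d\alpha$; the non-active pieces contribute a bounded amount, while on $S$, where $\eta_\alpha\asymp g(t)^{-1/\lambda}=g(t)^{1/|\lambda|}$ and $|\omega|\asymp g(t)^{-1}$, the contribution is $\asymp|S|\,g(t)^{\,1/|\lambda|-p}$. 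As $g(t)\downarrow0$ this is unbounded iff $1/|\lambda|-p<0$, i.e. iff $\lambda<-1/p$, and bounded iff $-1/p\le\lambda<0$; taking $p=1$ yields $\|u_x\|_1\to+\infty$ iff $\lambda<-1$, and the finite constants $C$ are the limiting contributions off $S$, plus $|S|\,|m\,\dot{\mathcal J}(t_*)|^p$ in the boundary case $\lambda=-1/p$.

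For assertion (\ref{it:4}), $u_0''\in PC_{\mathbb{R}}(0,1)$: now $u_0'$ is continuous, mean-zero and piecewise linear, so near the active extreme point $\bar\alpha$ one has $1-\lambda u_0'(\alpha)j_*\asymp|\alpha-\bar\alpha|$ and $\bar K(j)\asymp(j_*-j)^{1-1/\lambda}$ as $j\uparrow j_*$. Then $t_*=\int_0^{j_*}\bar K^{2\lambda}\,dj\asymp\int^{j_*}(j_*-j)^{2\lambda-2}\,dj$ diverges for $0\le\lambda\le1/2$ and converges for $\lambda>1/2$, while $\dot{\mathcal J}\,\bar K'/\bar K\asymp(j_*-j)^{1-2\lambda}$ near $j_*$, which tends to $0$ for $\lambda\in(0,1/2)$, to a positive constant for $\lambda=1/2$, and to $+\infty$ for $\lambda>1/2$. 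Hence for $\lambda\in[0,1/2]$ the solution is global ($t_*=+\infty$, $\mathcal J(t)\uparrow j_*$), and the representation gives $u_x\to0$ when $\lambda\in(0,1/2)$ and $u_x$ converging to a nonconstant steady state when $\lambda=1/2$. For $\lambda>1/2$ one has $t_*<\infty$, and since $\bar K'/\bar K\to+\infty$ the common term $-\dot{\mathcal J}\,\bar K'/\bar K$ of $\omega$ sends $\omega\to-\infty$ for every $\alpha\ne\bar\alpha$, while at $\bar\alpha$ the two terms of $\omega$ combine to $+\infty$; thus $|u_x|\to\infty$ everywhere and with both signs attained: two-sided, everywhere blow-up. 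If $\lambda<0$, the active extremum is a single corner of $u_0'$, so $\bar K(j_*)<\infty$, $t_*<\infty$, only $1-\lambda u_0'(\bar\alpha)\mathcal J$ reaches $0$, and there $\omega\to-\infty$: a one-sided, discrete blow-up.

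The main obstacle is the sharp analysis of the scalar problem $\dot{\mathcal J}=\bar K(\mathcal J)^{-2\lambda}$ — deciding, for each $\lambda$, whether $\mathcal J$ reaches $j_*$ in finite time (so $t_*<\infty$ and breakdown occurs) or only asymptotically ($t_*=+\infty$, global existence, plus the long-time behavior for $\lambda\in(0,1/2)$ versus $\lambda=1/2$) — together with the precise blow-up rates of $\eta_\alpha$, $\omega$ and $\bar K'/\bar K$ near $j_*$. Once the leading-order behavior of $\bar K(j)$ as $j\uparrow j_*$ is read off from the $PC_{\mathbb{R}}$ structure of $u_0'$ or $u_0''$ (elementary in each case), these rates pin down the thresholds $\lambda=1/2$, $\lambda=0$ and the $L^p$ exponents $-1/p$, $-1$, and the rest is bookkeeping, carried out in full in \cite{Sarria1}.
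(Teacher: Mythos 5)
Your proposal is correct and follows essentially the same route as the paper (and as \cite{Sarria1}, where this theorem is actually proved --- the present paper only quotes it): characteristics, the substitution $\gamma_\alpha^{-\lambda}$ to linearize, the representation formulae (\ref{eq:sum})--(\ref{eq:mainsolu}), and then the asymptotics of $\bar{\mathcal{K}}_0$, $\bar{\mathcal{K}}_1$ as $\eta\uparrow\eta_*$, with $u_0''\in PC$ corresponding to the paper's $q=1$ case and $u_0'\in PC$ to the $q\to+\infty$ limit. One small imprecision: your rate $\bar K(j)\asymp(j_*-j)^{1-1/\lambda}$ in part (1) holds only for $\lambda\in(0,1)$ --- for $\lambda>1$ the integral tends to a positive constant and for $\lambda=1$ it is logarithmic, as in (\ref{eq:phiest1}) and (\ref{eq:l1}) --- but all of your conclusions survive these sub-cases unchanged.
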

The reader may refer to \cite{Sarria1} for details, and the works \cite{Okamoto2}, \cite{Aconstantin1}, \cite{Hunter2}, \cite{Wunsch2}, \cite{Wunsch3}, \cite{Wunsch1} for additional background. The purpose of this work is to extend the above results to initial data which belongs to classes of functions with varying concavity profile near certain points in the domain. More particularly, we suppose throughout that $u_0'(x)$ is bounded and, at least, $C^0(0,1)$ $a.e.$ Then, for $\lambda>0$, we will assume there are constants $q, M_0\in\mathbb{R}^+$ and $C_1\in\mathbb{R}^-$, and a finite number of points $\overline\alpha_i\in[0,1]$ such that, near  $\overline\alpha_i$,
\begin{equation}
\label{eq:expnew02}
u_0^\prime(\alpha)\sim M_0+C_1\left|\alpha-\overline\alpha_i\right|^q.
\end{equation}
Analogously, for $\lambda<0$, we suppose there are constants $C_2\in\mathbb{R}^+$, $m_0\in\mathbb{R}^-$, and a finite number of locations $\underline\alpha_j\neq\overline\alpha_i$ in $[0,1]$ such that, in a neighbourhood of $\underline\alpha_j$,
\begin{equation}
\label{eq:expnew002}
u_0^\prime(\alpha)\sim m_0+C_2\left|\alpha-\underline\alpha_j\right|^q.
\end{equation}
We refer to \S\ref{subsec:dataclass} for specifics of the above. It is worth mentioning that, for $q\in(0,1)$, the above local estimates may lead to cusps in the graph of $u_0'$, therefore possible jump discontinuities in $u_0''$ of \textsl{infinite} magnitude across $\overline\alpha_i$ and/or $\underline\alpha_j$. In contrast, a jump discontinuity of \textsl{finite} magnitude in $u_0''$ may occur if $q=1$. As we will see in the coming sections, the  finite or infinite character in the size of this jump plays a decisive role, particularly in the formation of spontaneous singularities for the special case of stagnation point-form solutions to the three dimensional incompressible Euler equations.

The remaining of the paper is organized as follows. In \S\ref{sec:preliminaries}, we provide an outline for the derivation of the representation formulae established in \cite{Sarria1} and provide further details on the class of initial data to be considered in this article. Then, new blow-up results are stated and proved in \S\ref{sec:blowup}, while specific examples are to be found in \S\ref{sec:examples}.

\section{Preliminaries}
\label{sec:preliminaries}

\subsection{The General Solution}\hfill
\label{subsec:sol}

In \cite{Sarria1}, we used the method of characteristics to derive a representation formula for periodic solutions to (\ref{eq:nonhomo}). For convenience of the reader, below we give a brief outline of the derivation. 

Define the characteristics, $\gamma,$ as the solution to the initial value problem
\begin{equation}
\label{eq:cha}
\dot\gamma(\alpha,t)=u(\gamma(\alpha,t),t),\,\,\,\,\,\,\,\,\,\,\,\,\gamma(\alpha,0)=\alpha\in[0,1],
\end{equation}
so that
\begin{equation}
\label{eq:jacid}
\begin{split}
\dot\gamma_{\alpha}(\alpha,t)=u_x(\gamma(\alpha,t),t)\cdot\gamma_{\alpha}(\alpha,t).
\end{split}
\end{equation}
Then, using (\ref{eq:nonhomo})i), iii) and the above, we obtain 
\begin{equation}
\label{eq:chain}
\begin{split}
\ddot\gamma_\alpha&=(u_{xt}+uu_{xx})\circ\gamma\cdot\gamma_\alpha+(u_x\circ\gamma)\cdot\dot\gamma_\alpha
\\
&=(u_{xt}+uu_{xx})\circ\gamma\cdot\gamma_\alpha+u_x^2\circ\gamma\cdot\gamma_\alpha
\\
&=(\lambda+1)\left(u_x^2\circ\gamma-\int_0^1{u_x^2dx}\right)\cdot\gamma_\alpha
\\
&=(\lambda+1)\left((\gamma^{-1}_{\alpha}\cdot\dot\gamma_{\alpha})^2-\int_0^1{u_x^2dx}\right)\cdot\gamma_\alpha\,,
\end{split}
\end{equation}
which for $\lambda\neq0$, $I(t)=-(\lambda+1)\int_0^1{u_x^2dx}$, and $\omega(\alpha,t)=\gamma_{\alpha}(\alpha,t)^{-\lambda}$, can be written as
\begin{equation}
\label{eq:nonhomo2}
\begin{split}
\ddot\omega(\alpha,t)+\lambda I(t)\omega(\alpha,t)=0.
\end{split}
\end{equation}
Assume we have two linearly independent solutions $\phi_1(t)$ and $\phi_2(t)$ to (\ref{eq:nonhomo2}) satisfying $\phi_1(0)=\dot{\phi}_2(0)=1$ and $\dot{\phi}_1(0)=\phi_2(0)=0$. Then, since $\dot{\omega}=-\lambda\gamma_\alpha^{-(\lambda+1)}\dot{\gamma_\alpha}$ and $\gamma_\alpha(\alpha,0)=1$, we deduce that 
\begin{equation}
\label{eq:compat}
\begin{split}
\omega(\alpha,t)=\phi_1(t)\left(1-\lambda \eta(t)u_0'(\alpha)\right),\,\,\,\,\,\,\,\,\,\,\,\,\eta(t)=\int_0^t\frac{ds}{\phi_1^2(s)}.
\end{split}
\end{equation}
Now, uniqueness of solution to (\ref{eq:cha}) and periodicity implies that
\begin{equation}
\label{eq:chaperio}
\begin{split}
\gamma(\alpha+1,t)-\gamma(\alpha,t)=1
\end{split}
\end{equation} 
for as long as $u$ is defined. Consequently, simplifying and integrating (\ref{eq:compat})i) with respect to $\alpha$ gives
\begin{equation}
\label{eq:sum}
\gamma_\alpha={\mathcal K}_0/{\bar{\mathcal K}}_0
\end{equation}
where we define
\begin{equation}
\label{eq:def}
\begin{split}
\mathcal{K}_i(\alpha, t)=\frac{1}{\mathcal{J}(\alpha,t)^{i+{\frac{1}{\lambda}}}},\,\,\,\,\,\,\,\,\,\,\,\,\bar{\mathcal{K}}_i(t)=\int_0^1{\frac{d\alpha}{\mathcal{J}(\alpha,t)^{i+\frac{1}{\lambda}}}},
\end{split}
\end{equation}
for $i\in\mathbb{N}\cup\{0\}$, and
\begin{equation}
\label{eq:J}
\begin{split}
\mathcal{J}(\alpha,t)=1-\lambda\eta(t)u_0^\prime(\alpha),\,\,\,\,\,\,\,\,\,\,\,\,\mathcal{J}(\alpha,0)=1.
\end{split}
\end{equation}
As a result, (\ref{eq:jacid}) and (\ref{eq:J})i) yield, after further simplification,
\begin{equation}
\label{eq:mainsolu}
\begin{split}
u_x(\gamma(\alpha,t),t)=\frac{1}{\lambda\eta(t){\bar{\mathcal K}_0(t)}^{2\lambda}}
\left(\frac{1}{\mathcal{J}(\alpha, t)}-\frac{\bar{\mathcal{K}}_1(t)}{\bar{\mathcal K}_0(t)}
\right).
\end{split}
\end{equation} 
The strictly increasing function $\eta(t)$ satisfies the initial value problem
\begin{equation}
\label{eq:etaivp}
\begin{split}
\dot{\eta}(t)=\bar{\mathcal{K}}_0(t)^{^{-2\lambda}},\,\,\,\,\,\,\,\,\,\,\,\eta(0)=0,
\end{split}
\end{equation}   
from which the existence of an eventual finite blow-up time $t_*>0$ for (\ref{eq:mainsolu}) will depend, in turn, upon the existence of a finite, positive limit
\begin{equation}
\label{eq:assympt}
\begin{split}
t_*\equiv\lim_{\eta\uparrow\eta_*}\int_0^{\eta}{\left(\int_0^1{\frac{d\alpha}{(1-\lambda\mu u_0^\prime(\alpha))^{\frac{1}{\lambda}}}}\right)^{2\lambda}\,d\mu}
\end{split}
\end{equation}
for $\eta_*>0$ to be defined. Moreover, assuming sufficient smoothness, (\ref{eq:sum}) and (\ref{eq:mainsolu}) imply that
\begin{equation}
\label{eq:preserv1}
\begin{split}
u_{xx}(\gamma(\alpha,t),t)=\frac{u_0^{\prime\prime}(\alpha)}{\mathcal{J}(\alpha,t)^{2-\frac{1}{\lambda}}}\bar{\mathcal{K}}_0(t)^{1-2\lambda},
\end{split}
\end{equation}
so that, for as long as it exists, $u$ maintains its initial concavity profile.

\subsection{The Data Classes}\hfill
\label{subsec:dataclass}

Suppose solutions exist for $t\in[0,t_*)$,\, $0<t_*\leq+\infty$. Define
\begin{equation}
\label{eq:max}
\begin{split}
M(t)\equiv\sup_{\alpha\in[0,1]}\{u_x(\gamma(\alpha,t),t)\},\,\,\,\,\,\,\,\,\,\,\,\,\,\,\,\,\,M(0)=M_0
\end{split}
\end{equation}
and
\begin{equation}
\label{eq:min22}
\begin{split}
m(t)\equiv\inf_{\alpha\in[0,1]}\{u_x(\gamma(\alpha,t),t)\},\,\,\,\,\,\,\,\,\,\,\,\,\,\,\,\,\,m(0)=m_0,
\end{split}
\end{equation}
where $\overline\alpha_i$, $i=1,2,...,m$, and $\underline\alpha_j$, $j=1,2,...,n$, denote the finite\footnote[1]{One possibility for having an infinite number of these points will be considered later on via a limiting argument.} number of locations in $[0,1]$ where $u_0'(\alpha)$ attains its greatest and least values $M_0>0>m_0$, respectively. Then, it follows from (\ref{eq:mainsolu}) (\cite{Sarria1}) that 
\begin{equation}
\label{eq:maxmin}
\begin{split}
M(t)=u_x(\gamma(\overline\alpha_i,t),t),\,\,\,\,\,\,\,\,\,\,\,\,\,\,\,\,m(t)=u_x(\gamma(\underline\alpha_j,t),t)
\end{split}
\end{equation}
for $0\leq t<t_*$. Now, the results of Theorems \ref{thm:sarria1}-\ref{thm:sarria2} suggest that the curvature of $u_0'$ near $\overline\alpha_i$ and/or $\underline\alpha_j$ plays a decisive role in the regularity of solutions to (\ref{eq:nonhomo}). Therefore, in the following sections, we further examine this interaction by considering a large class of functions in which $u_0'(x)$ is assumed to be bounded, at least $C^0(0,1)\, a.e.$, and has arbitrary curvature near the location(s) in question. More particularly, for $\lambda>0$, we will assume there are constants $q\in\mathbb{R}^+$ and $C_1\in\mathbb{R}^-$ such that 
\begin{equation}
\label{eq:expnew0}
u_0^\prime(\alpha)\sim M_0+C_1\left|\alpha-\overline\alpha_i\right|^q
\end{equation}
for $0\leq\left|\alpha-\overline\alpha_i\right|\leq r$, and small enough $0<r\leq1$, $r\equiv\min_{1\leq i\leq m}\{r_i\}$. Similarly, for $\lambda<0$, we suppose there is $C_2\in\mathbb{R}^+$ such that
\begin{equation}
\label{eq:expnew00}
u_0^\prime(\alpha)\sim m_0+C_2\left|\alpha-\underline\alpha_j\right|^q
\end{equation}
for $0\leq\left|\alpha-\underline\alpha_j\right|\leq s$ and $0<s\leq1$, $s\equiv\min_{1\leq j\leq n}\{s_j\}$.
See Figure \ref{fig:data0} below. Now, for $r$ and $s$ as above, define 
$$\mathcal{D}_i\equiv[\overline\alpha_i-r,\overline\alpha_i+r],\,\,\,\,\,\,\,\,\,\,\,\,\,\mathcal{D}_j\equiv[\underline\alpha_j-s,\underline\alpha_j+s].$$
Then, below we list some of the data classes that admit the asymptotic behaviour (\ref{eq:expnew0}) and/or (\ref{eq:expnew00}) for particular values of $q>0$. 

\begin{itemize}

\item $u_0(x)\in C^{\infty}(0,1)$ for $q=2k$ and $k\in\mathbb{Z}^+$ (see definition \ref{def:order}).

\item If $q=1,$ $u_0''(x)\in PC(\mathcal{D}_i)$ for $\lambda>0$, or $u_0''(x)\in PC(\mathcal{D}_j)$ if $\lambda<0$.

\item In the limit as $q\to+\infty$, $u_0'(x)\in PC(\mathcal{D}_i)$ for $\lambda>0$, or $u_0'(x)\in PC(\mathcal{D}_j)$ if $\lambda<0$.

\item From (\ref{eq:expnew0}), we see that the quantity

\begin{equation}
\label{eq:holder2}
[u_0']_{_{q;\overline\alpha_i}}=\sup_{\alpha\in\mathcal{D}_i}\frac{\lvert u_0'(\alpha)-u_0'(\overline\alpha_i)\rvert}{\lvert\alpha-\overline\alpha_i\rvert^q}
\end{equation}

is finite. As a result, for $0<q\leq1$ and $\lambda>0$, $u_0'$ is H$\ddot{\text{o}}$lder continuous at $\overline\alpha_i$. Analogously for $\lambda<0$, since

\begin{equation}
\label{eq:holder3}
[u_0']_{_{q;\underline\alpha_j}}=\sup_{\alpha\in\mathcal{D}_j}\frac{\lvert u_0'(\alpha)-u_0'(\underline\alpha_j)\rvert}{\lvert\alpha-\underline\alpha_j\rvert^q}
\end{equation}

is defined by (\ref{eq:expnew00}).

\item For $\lambda>0$ and either $N<q<N+1$, $N\in\mathbb{N}$, or $q>0$ odd, $u_0'(\alpha)\in C^{^{N+1}}(\mathcal{D}_i)$. Similarly for $\lambda<0$.
\end{itemize}

\begin{center}
\begin{figure}[!ht]
\includegraphics[scale=0.25]{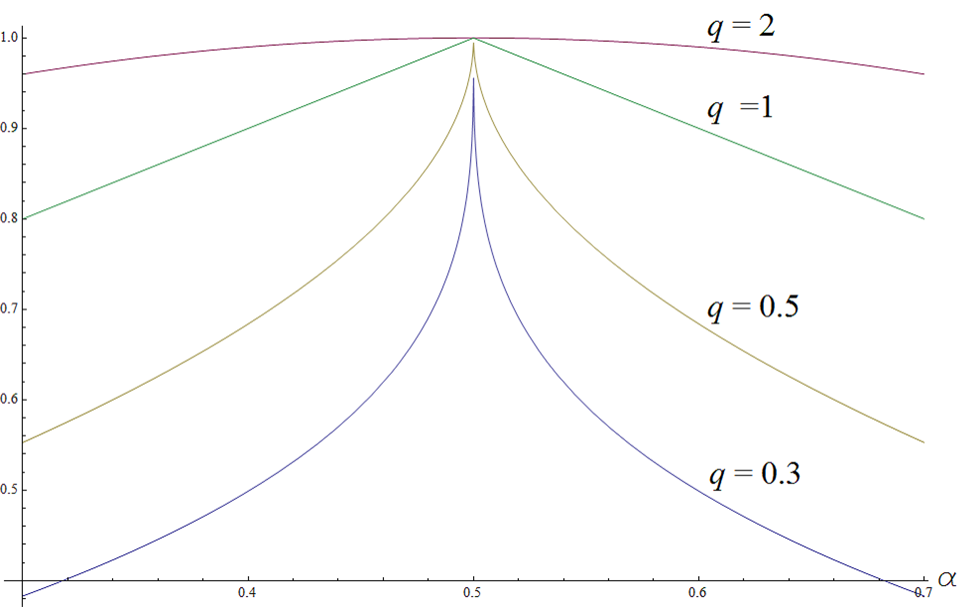} 
\caption{Local behaviour of $u_0'(\alpha)$ satisfying (\ref{eq:expnew0}) for several values of $q>0$, $\overline\alpha=1/2$, $M_0=1$ and $C_1=-1$.}
\label{fig:data0}
\end{figure}
\end{center}

\section{Blow-up}\hfill
\label{sec:blowup}

In this section, we study regularity properties in solutions to (\ref{eq:nonhomo})-(\ref{eq:pbc}) which, according to the sign of $\lambda$, arise from initial data satisfying (\ref{eq:expnew0}) and/or (\ref{eq:expnew00}). More particularly, finite-time blow-up and global existence in time are examined using $L^p(0,1)$ Banach spaces for $p\in[1,+\infty]$. Set
\begin{equation}
\label{eq:defeta*}
\begin{split}
\eta_*=
\begin{cases}
\frac{1}{\lambda M_0},\,\,\,\,\,\,\,&\lambda>0,
\\
\frac{1}{\lambda m_0},\,\,\,\,\,\,\,&\lambda<0.
\end{cases}
\end{split}
\end{equation}
Then, as $\eta\uparrow \eta_*,$ the space-dependent term in (\ref{eq:mainsolu}) will diverge for certain choices of $\alpha$ and not at all for others. Specifically, for $\lambda>0,$ $\mathcal{J}(\alpha,t)^{-1}$ blows up  earliest as $\eta\uparrow \eta_*$ at $\alpha=\overline\alpha_i,$ since
\begin{equation*}
\begin{split}
\mathcal{J}(\overline\alpha_i,t)^{-1}=(1-\lambda\eta(t)M_0)^{-1}\to+\infty\,\,\,\,\,\,\,\text{as}\,\,\,\,\,\,\,\eta\uparrow\eta_*=\frac{1}{\lambda M_0}.
\end{split}
\end{equation*}
Similarly for $\lambda<0,\, \mathcal{J}(\alpha,t)^{-1}$ diverges first at $\alpha=\underline\alpha_j$ and 
\begin{equation*}
\begin{split}
\mathcal{J}(\underline\alpha_j,t)^{-1}=(1-\lambda\eta(t)m_0)^{-1}\to+\infty\,\,\,\,\,\,\,\text{as}\,\,\,\,\,\,\,\eta\uparrow\eta_*=\frac{1}{\lambda m_0}.
\end{split}
\end{equation*}
However, blow-up of (\ref{eq:mainsolu}) does not necessarily follow from this; we will need to estimate the behaviour of the time-dependent integrals 
\begin{equation*}
\begin{split}
\bar{\mathcal{K}}_0(t)=
\int_0^1{\frac{d\alpha}{\mathcal{J}(\alpha,t)^{\frac{1}{\lambda}}}},\,\,\,\,\,\,\,\,\,\,\,\,\,\,\,\,\,\,\,\,\,\,\,\,\,\,\bar{\mathcal{K}}_1(t)=\int_0^1{\frac{d\alpha}{\mathcal{J}(\alpha,t)^{1+\frac{1}{\lambda}}}}
\end{split}
\end{equation*}
as $\eta\uparrow\eta_*.$ To this end, in some of the proofs we find convenient the use of the Gauss hypergeometric series (\cite{Barnes1}, \cite{Magnus1}, \cite{Gasper1})
\begin{equation}
\label{eq:2f1}
\begin{split}
{}_2F_1\left[a,b;c;z\right]\equiv\sum_{k=0}^{\infty}\frac{\left(a\right)_k(b)_k}{\left(c\right)_k\,k!}z^k,\,\,\,\,\,\,\,\,\,\,\,\lvert z\rvert< 1,
\end{split}
\end{equation} 
for $c\notin\mathbb{Z}^-\cup\{0\}$ and $(x)_k,\, k\in\mathbb{N}\cup\{0\}$, the Pochhammer symbol $(x)_0=1$, $(x)_k=x(x+1)...(x+k-1).$ Also, we will make use of the following results: 
\begin{lemma}
\label{lem:analcont}
Suppose $\lvert\text{arg}\left(-z\right)\rvert<\pi$ and $a,b,c,a-b\notin\mathbb{Z},$ then the analytic continuation for $\lvert z\rvert>1$ of the series (\ref{eq:2f1}) is given by 
\begin{equation}
\label{eq:analform}
\begin{split}
{}_2F_1[a,b;c;z]=&\frac{\Gamma(c)\Gamma(a-b)(-z)^{-b}{}_2F_1[b,1+b-c;1+b-a;z^{-1}]}{\Gamma(a)\Gamma(c-b)}
\\
&+\frac{\Gamma(c)\Gamma(b-a)(-z)^{-a}{}_2F_1[a,1+a-c;1+a-b;z^{-1}]}{\Gamma(b)\Gamma(c-a)}
\end{split}
\end{equation} 
where $\Gamma(\cdot)$ denotes the standard gamma function.
\end{lemma}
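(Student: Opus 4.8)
The plan is to derive the connection formula (\ref{eq:analform}) from the Mellin--Barnes integral representation of ${}_2F_1$ together with a contour shift and a residue computation; an alternative is to check directly that both sides solve the hypergeometric ODE $z(1-z)w''+[c-(a+b+1)z]w'-ab\,w=0$ and carry the Frobenius exponents $-a,-b$ at $z=\infty$, but the integral route makes the constants explicit. Recall Barnes' representation: for $\lvert\arg(-z)\rvert<\pi$,
\begin{equation*}
{}_2F_1[a,b;c;z]=\frac{\Gamma(c)}{\Gamma(a)\Gamma(b)}\cdot\frac{1}{2\pi i}\int_{\mathcal{C}}\frac{\Gamma(a+s)\Gamma(b+s)\Gamma(-s)}{\Gamma(c+s)}(-z)^s\,ds,
\end{equation*}
where the vertical contour $\mathcal{C}$ separates the poles of $\Gamma(-s)$ (at $s=0,1,2,\dots$) from those of $\Gamma(a+s)\Gamma(b+s)$ (at $s=-a-k$ and $s=-b-k$, $k\in\mathbb{N}\cup\{0\}$); such a $\mathcal{C}$ exists since $a,b\notin\mathbb{Z}$, and closing it to the right reproduces the defining series (\ref{eq:2f1}) when $\lvert z\rvert<1$. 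This representation (or its derivation) is taken as known.

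First I would fix $z$ with $\lvert z\rvert>1$ and $\lvert\arg(-z)\rvert<\pi$ and shift $\mathcal{C}$ leftward across the poles at $s=-a-k$ and $s=-b-k$. Because $a-b\notin\mathbb{Z}$, these two families are disjoint, so every pole crossed is simple; by the residue theorem, and granting that the integral over the far-left vertical line vanishes, ${}_2F_1[a,b;c;z]$ equals $\Gamma(c)/(\Gamma(a)\Gamma(b))$ times the sum of all these residues.

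Second, I would compute the residue at $s=-a-k$: there $\Gamma(a+s)$ contributes $(-1)^k/k!$ and the remaining factors become $\Gamma(b-a-k)\,\Gamma(a+k)\,(-z)^{-a-k}/\Gamma(c-a-k)$. The Pochhammer identity $\Gamma(x-k)=(-1)^k\Gamma(x)/(1-x)_k$, used with $x=b-a$ and $x=c-a$, together with $\Gamma(a+k)=(a)_k\Gamma(a)$ and the collapse of sign factors $(-1)^{3k}(-z)^{-k}=z^{-k}$, turns the $k$-sum into
\begin{equation*}
\frac{\Gamma(a)\Gamma(b-a)}{\Gamma(c-a)}(-z)^{-a}\sum_{k\geq0}\frac{(a)_k(1+a-c)_k}{(1+a-b)_k\,k!}z^{-k}=\frac{\Gamma(a)\Gamma(b-a)}{\Gamma(c-a)}(-z)^{-a}\,{}_2F_1[a,1+a-c;1+a-b;z^{-1}],
\end{equation*}
the series converging since $\lvert z^{-1}\rvert<1$, the bottom parameter $1+a-b$ admissible since $a-b\notin\mathbb{Z}$. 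Multiplying by $\Gamma(c)/(\Gamma(a)\Gamma(b))$ gives the second term of (\ref{eq:analform}); interchanging $a\leftrightarrow b$ (residues at $s=-b-k$) yields the first.

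\textbf{Main obstacle.} The one genuinely analytic point is justifying the contour shift, i.e. showing that the integral over the line $\operatorname{Re}s=-\sigma$ tends to $0$ as $\sigma\to+\infty$ through values keeping $\mathcal{C}$ away from the poles. This is exactly where both hypotheses on $z$ are used: writing $s=-\sigma+it$ gives $\lvert(-z)^s\rvert=\lvert z\rvert^{-\sigma}e^{-t\arg(-z)}$, while Stirling's asymptotics give $\bigl\lvert\Gamma(a+s)\Gamma(b+s)\Gamma(-s)/\Gamma(c+s)\bigr\rvert=O\bigl(e^{-\pi\lvert t\rvert}\lvert t\rvert^{N}\bigr)$ as $\lvert t\rvert\to\infty$ (three Gammas up, one down, for some $N$ depending on $\sigma$). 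Since $\lvert\arg(-z)\rvert<\pi$, the factor $e^{-\pi\lvert t\rvert}$ dominates $e^{-t\arg(-z)}$, so the $t$-integral converges uniformly, and the remaining $\sigma$-growth from the Gamma factors is beaten by $\lvert z\rvert^{-\sigma}\to0$ because $\lvert z\rvert>1$; choosing, e.g., half-integer-offset contours keeps $\lvert\Gamma\rvert$ bounded below during the shift, and absolute convergence of the residue series (immediate from $\lvert z^{-1}\rvert<1$) legitimizes the interchange of limit and summation. Once this routine estimate is in place, (\ref{eq:analform}) follows.
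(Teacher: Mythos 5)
Your proposal is correct, but it cannot be compared against an argument in the paper because the paper does not prove this lemma at all: Lemma \ref{lem:analcont} is a classical connection formula for the Gauss hypergeometric function, and the paper's ``proof'' consists of the citation ``See for instance \cite{Magnus1}, \cite{Gasper1}.'' What you have written out is precisely the standard Mellin--Barnes derivation found in those references (and in Barnes' original 1908 paper \cite{Barnes1}, which the authors also cite): represent ${}_2F_1$ by the Barnes integral, shift the contour to the left past the two disjoint families of simple poles at $s=-a-k$ and $s=-b-k$ (disjoint and simple exactly because $a-b\notin\mathbb{Z}$), and resum each family of residues via $\Gamma(x-k)=(-1)^k\Gamma(x)/(1-x)_k$ into the two series in $z^{-1}$. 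Your residue computation checks out, and you correctly isolate the only analytic content, namely the vanishing of the far-left vertical integral, which uses $\lvert\arg(-z)\rvert<\pi$ to beat the $e^{-\pi\lvert t\rvert}$ decay of the Gamma quotient against $e^{-t\arg(-z)}$ and $\lvert z\rvert>1$ to kill the at-most-polynomial growth in the shift parameter (the factorial growth of $\Gamma(-s)$ cancels against the factorial decay of $\Gamma(a+s)\Gamma(b+s)/\Gamma(c+s)$ via reflection). Two small housekeeping points: the contour separating the two pole families exists because $a,b\notin\mathbb{Z}$ rules out $a,b\in\{0,-1,-2,\dots\}$, and the admissibility of the lower parameters $1+a-b$, $1+b-a$ in the new series is again the hypothesis $a-b\notin\mathbb{Z}$ --- both of which you use. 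In short: a complete and standard proof of a result the paper deliberately imports from the literature.
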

\begin{proof}
See for instance \cite{Magnus1}, \cite{Gasper1}.
\end{proof}

\begin{lemma}
\label{lem:diff}
Suppose $b<2$, $0\leq\left|\beta-\beta_0\right|\leq1$ and $\epsilon\geq C_0$ for some $C_0>0.$ Then 
\begin{equation}
\label{eq:derseries}
\begin{split}
\frac{1}{\epsilon^b}\,\frac{d}{d\beta}\left((\beta-\beta_0)\,{}_2F_1\left[\frac{1}{q},b;1+\frac{1}{q};-\frac{C_0\left|\beta-\beta_0\right|^q}{\epsilon}\right]\right)=(\epsilon+C_0\left|\beta-\beta_0\right|^q)^{-b}
\end{split}
\end{equation} 
for all $q\in\mathbb{R}^+$ and $b\neq1/q.$
\end{lemma}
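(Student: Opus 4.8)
\emph{Proof proposal.} The plan is to reduce the identity (\ref{eq:derseries}) to the elementary Euler-type integral representation of the hypergeometric function on its left-hand side, after which the claimed derivative drops out of the fundamental theorem of calculus. Both sides of (\ref{eq:derseries}) are even about $\beta=\beta_0$, so it suffices to treat $\beta\ge\beta_0$; write $x:=\beta-\beta_0\in[0,1]$ and $y:=\frac{C_0 x^q}{\epsilon}$, and note that the hypotheses $0\le|\beta-\beta_0|\le1$ and $\epsilon\ge C_0>0$ force $0\le y\le1$, so the argument $z=-y$ of the ${}_2F_1$ lies in $[-1,0]$ and no analytic continuation (Lemma~\ref{lem:analcont}) is needed. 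Of the standing hypotheses on $q$ and $b$, the one that does real work below is $b<2$: it is exactly what guarantees convergence of the relevant series at the endpoint $z=-1$. The value $x=0$ (and hence $\beta<\beta_0$, by evenness) will be recovered at the end by continuity.

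The first step is to establish
\begin{equation*}
{}_2F_1\!\left[\tfrac1q,\,b;\,1+\tfrac1q;\,-y\right]=\frac1q\int_0^1 t^{\frac1q-1}\,(1+yt)^{-b}\,dt .
\end{equation*}
For $y<1$ this is immediate: expand $(1+yt)^{-b}=\sum_{k\ge0}\frac{(b)_k}{k!}(-yt)^k$, which converges uniformly for $t\in[0,1]$, integrate term by term, and invoke the Pochhammer identity
\begin{equation*}
\frac{(1/q)_k}{(1+1/q)_k}=\frac{1/q}{1/q+k}=\frac{1}{qk+1},
\end{equation*}
which is the one computational fact the whole lemma rests on. The case $y=1$ follows by continuity of both sides in $y$ up to $y=1$: the right-hand side is continuous by dominated convergence (its integrand stays bounded on $[0,1]$), and the left-hand side by Abel's theorem, the series at $z=-1$ converging precisely because $b<2$. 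Next, in this representation substitute $t=(s/x)^q$, which sends $\frac1q t^{\frac1q-1}\,dt$ to $x^{-1}\,ds$ and $1+yt$ to $\epsilon^{-1}(\epsilon+C_0 s^q)$, producing
\begin{equation*}
(\beta-\beta_0)\,{}_2F_1\!\left[\tfrac1q,\,b;\,1+\tfrac1q;\,-\frac{C_0|\beta-\beta_0|^q}{\epsilon}\right]=\epsilon^b\int_0^{\beta-\beta_0}\frac{ds}{(\epsilon+C_0|s|^q)^{b}} ,
\end{equation*}
where the sign of $\beta-\beta_0$ and the absolute value have been absorbed so that the formula holds for every $\beta$ with $|\beta-\beta_0|\le1$, both sides vanishing at $\beta=\beta_0$.

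Finally, differentiating the last display in $\beta$ by the fundamental theorem of calculus, which is legitimate at every such $\beta$ --- including $\beta=\beta_0$, since $s\mapsto(\epsilon+C_0|s|^q)^{-b}$ is continuous there --- yields $\epsilon^b(\epsilon+C_0|\beta-\beta_0|^q)^{-b}$, and division by $\epsilon^b$ is exactly (\ref{eq:derseries}). The only real obstacle is the modest one of justifying the term-by-term integration, equivalently the term-by-term differentiation of the original series, up to the boundary $|z|=1$; this is precisely where $b<2$ is used, and it is handled by the continuity/Abel argument above. Everything else --- the substitution, the sign bookkeeping for $\beta<\beta_0$, and the limit $\beta\to\beta_0$ --- is routine. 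Alternatively, one may run the argument entirely at the level of power series: multiply the series for the ${}_2F_1$ by $(\beta-\beta_0)$, differentiate termwise, use the Pochhammer identity to collapse the coefficient $(qk+1)\frac{(1/q)_k}{(1+1/q)_k}$ to $1$, and recognize what remains as the binomial series for $\epsilon^b(\epsilon+C_0|\beta-\beta_0|^q)^{-b}$.
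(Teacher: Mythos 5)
Your proposal is correct. Note that the paper itself supplies no argument for Lemma \ref{lem:diff}: it only remarks that the lemma generalizes Lemma 4.5 of \cite{Sarria1} and that ``its proof follows similar reasoning,'' so your write-up fills in what the paper leaves to a citation. Your main route --- the Euler integral representation ${}_2F_1[\frac1q,b;1+\frac1q;-y]=\frac1q\int_0^1 t^{1/q-1}(1+yt)^{-b}\,dt$ (valid here since $c-a=1$), followed by the substitution $t=(s/x)^q$ and the fundamental theorem of calculus --- is clean and handles the endpoint $y=1$ (forced by $\epsilon\geq C_0$ and $|\beta-\beta_0|\leq1$) via Abel's theorem, which is exactly where $b<2$ enters through $\Re(c-a-b)=1-b>-1$. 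The alternative you sketch at the end (termwise differentiation of $(\beta-\beta_0)\,{}_2F_1$ and the collapse $(qk+1)\frac{(1/q)_k}{(1+1/q)_k}=1$) is presumably the ``similar reasoning'' the authors have in mind, since it is the most direct verification for $|z|<1$; your integral representation buys the rigorous extension to $|z|=1$ that the bare series manipulation leaves implicit. Your observation that $b\neq1/q$ is not actually needed for the identity is also correct --- that hypothesis is inherited from the analytic-continuation formula of Lemma \ref{lem:analcont}, where it is used downstream, not from the differentiation itself.
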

Lemma \ref{lem:diff} above is a generalization of Lemma 4.5 in \cite{Sarria1}. Its proof follows similar reasoning. Finally, the next Lemma provides us with additional tools for estimating the behaviour, as $\eta\uparrow\eta_*$, of time-dependent integrals of the type $\bar{\mathcal{K}}_i(t)$. Its proof is deferred to \S\ref{subsec:generalcase}.
\begin{lemma}
\label{lem:general}
For some $q\in\mathbb{R}^+$, suppose $u_0'(\alpha)$ satisfies (\ref{eq:expnew0}) when $\lambda\in\mathbb{R}^+$, or (\ref{eq:expnew00}) if $\lambda\in\mathbb{R}^-$. It holds:

\vspace{0.05in}

1.\, If $\lambda\in\mathbb{R}^+$ and $b>\frac{1}{q}$,
\begin{equation}
\label{eq:generalestimate}
\begin{split}
\int_0^1{\frac{d\alpha}{\mathcal{J}(\alpha,t)^b}}\sim C\mathcal{J}(\overline\alpha_i,t)^{\frac{1}{q}-b}
\end{split}
\end{equation}
for $\eta_*-\eta>0$ small and positive constants $C$ given by
\begin{equation}
\label{eq:generalconstant}
\begin{split}
C=\frac{2m\Gamma\left(1+\frac{1}{q}\right)\Gamma\left(b-\frac{1}{q}\right)}{\Gamma\left(b\right)}\left(\frac{M_0}{\left|C_1\right|}\right)^{\frac{1}{q}}.
\end{split}
\end{equation}
Here, $m\in\mathbb{N}$ denotes the finite number of locations $\overline\alpha_i$ in $[0,1]$. 

\vspace{0.05in}

2.\, If $\lambda\in\mathbb{R}^-$ and $b>\frac{1}{q}$,
\begin{equation}
\label{eq:generalestimate2}
\begin{split}
\int_0^1{\frac{d\alpha}{\mathcal{J}(\alpha,t)^b}}\sim C\mathcal{J}(\underline\alpha_j,t)^{\frac{1}{q}-b}
\end{split}
\end{equation}
for $\eta_*-\eta>0$ small and positive constants $C$ determined by 
\begin{equation}
\label{eq:generalconstant2}
\begin{split}
C=\frac{2n\Gamma\left(1+\frac{1}{q}\right)\Gamma\left(b-\frac{1}{q}\right)}{\Gamma\left(b\right)}\left(\frac{\left|m_0\right|}{C_2}\right)^{\frac{1}{q}}.
\end{split}
\end{equation}
Above, $n\in\mathbb{N}$ represents the finite number of points $\underline\alpha_j$ in $[0,1]$.

\vspace{0.05in}

3.\, Suppose $q>1/2$ and $b\in(0,1/q)$, or $q\in(0,1/2)$ and $b\in(0,2)$, satisfy $\frac{1}{q}$, $b$, $b-\frac{1}{q}\notin\mathbb{Z}$. Then for $\lambda\neq0$ and $\eta_*$ as defined in (\ref{eq:defeta*}),
\begin{equation}
\label{eq:generalestimate3}
\begin{split}
\int_0^1{\frac{d\alpha}{\mathcal{J}(\alpha,t)^b}}\sim C
\end{split}
\end{equation}
for $\eta_*-\eta>0$ small and positive constants $C$ that depend on the choice of $\lambda$, $b$ and $q$. Similarly, the integral remains bounded, and positive, for all $\eta\in[0,\eta_*]$ and $\lambda\neq0$ when $b\leq0$ and $q\in\mathbb{R}^+$.
\end{lemma}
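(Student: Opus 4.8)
The plan is to reduce each of the three integrals to a form governed by the local expansions (\ref{eq:expnew0}), (\ref{eq:expnew00}), then to recognize the resulting one-variable integrals as incomplete Beta-type integrals and apply the hypergeometric machinery of Lemmas \ref{lem:analcont} and \ref{lem:diff}. First I would partition $[0,1]$ into the neighbourhoods $\mathcal{D}_i$ (resp.\ $\mathcal{D}_j$) and their complement. On the complement, $\mathcal{J}(\alpha,t)$ stays bounded away from $0$ as $\eta\uparrow\eta_*$ because $u_0'(\alpha)<M_0$ (resp.\ $>m_0$) there with a gap, so $\int_{[0,1]\setminus\bigcup\mathcal{D}_i}\mathcal{J}^{-b}\,d\alpha$ converges to a finite positive constant and is negligible relative to $\mathcal{J}(\overline\alpha_i,t)^{1/q-b}\to+\infty$ when $b>1/q$ (part 1), or simply contributes to the finite constant in part 3. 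So everything hinges on the local pieces.

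Near $\overline\alpha_i$, substitute (\ref{eq:expnew0}): $\mathcal{J}(\alpha,t)=1-\lambda\eta u_0'(\alpha)\sim (1-\lambda\eta M_0)-\lambda\eta C_1|\alpha-\overline\alpha_i|^q = \mathcal{J}(\overline\alpha_i,t) + \lambda\eta|C_1|\,|\alpha-\overline\alpha_i|^q$ (using $C_1<0$ and $\lambda>0$). Writing $\epsilon=\mathcal{J}(\overline\alpha_i,t)$, which is small and positive as $\eta\uparrow\eta_*$, and $C_0=\lambda\eta|C_1|\sim |C_1|/M_0$, each local integral becomes $2m\int_0^r(\epsilon+C_0\beta^q)^{-b}\,d\beta$ up to lower-order terms. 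Now I invoke Lemma \ref{lem:diff}: its right-hand side is exactly the integrand $(\epsilon+C_0\beta^q)^{-b}$, so the antiderivative is $\epsilon^{-b}\beta\,{}_2F_1[\tfrac1q,b;1+\tfrac1q;-C_0\beta^q/\epsilon]$ evaluated at $\beta=r$ (the $\beta=0$ endpoint vanishes). For part 1, since $b>1/q$, the argument $-C_0r^q/\epsilon\to-\infty$, so I apply the analytic-continuation formula (\ref{eq:analform}) of Lemma \ref{lem:analcont} to extract the leading behaviour: the dominant term carries the factor $\epsilon^{1/q-b}$ with coefficient assembled from $\Gamma$-functions, and matching the Pochhammer/Gamma bookkeeping produces precisely the constant $C$ in (\ref{eq:generalconstant}), with the factor $(M_0/|C_1|)^{1/q}$ coming from $C_0^{-1/q}$. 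Part 2 is identical with $m_0<0$, $\lambda<0$, $C_2>0$: one checks $\mathcal{J}(\underline\alpha_j,t)\sim(1-\lambda\eta m_0)+|\lambda|\eta C_2|\alpha-\underline\alpha_j|^q\downarrow 0$ and repeats the computation, giving (\ref{eq:generalconstant2}).

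For part 3, the exponent $b$ is small enough ($b<1/q$, or $q<1/2$ with $b<2$) that the local integrals $\int_0^r(\epsilon+C_0\beta^q)^{-b}\,d\beta$ converge as $\epsilon\downarrow 0$ to the finite value $\int_0^r C_0^{-b}\beta^{-qb}\,d\beta$ (finite since $qb<1$ when $b<1/q$, and when $q<1/2$ the hypothesis $b<2$ also gives $qb<1$); equivalently, in the hypergeometric representation the argument $-C_0r^q/\epsilon\to-\infty$ but now $b<1/q$ so (\ref{eq:analform}) shows the $\epsilon^{1/q-b}$-term is \emph{subdominant} and the limit is a finite constant, which together with the bounded complementary integral gives (\ref{eq:generalestimate3}); the conditions $\tfrac1q,b,b-\tfrac1q\notin\mathbb{Z}$ are exactly what is needed to apply Lemmas \ref{lem:analcont}--\ref{lem:diff}. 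The final sentence ($b\le0$) is immediate: $\mathcal{J}(\alpha,t)^{-b}=\mathcal{J}(\alpha,t)^{|b|}$ stays bounded and positive on $[0,1]\times[0,\eta_*]$ since $0<\mathcal{J}\le \max(1,\,1-\lambda\eta_* m_0)$ there. I expect the main obstacle to be the bookkeeping in part 1/2: carefully controlling the error from replacing $u_0'(\alpha)$ by its asymptotic form (\ref{eq:expnew0}) uniformly in $\alpha\in\mathcal{D}_i$ as $\epsilon\downarrow 0$ — i.e.\ showing the $o(|\alpha-\overline\alpha_i|^q)$ correction does not affect the leading constant — and then correctly identifying which of the two terms in (\ref{eq:analform}) dominates and extracting the $\Gamma$-factor combination, since the index shifts $1+b-c$, $1+\tfrac1q-1=\tfrac1q$, etc., must line up to reproduce $2m\Gamma(1+\tfrac1q)\Gamma(b-\tfrac1q)/\Gamma(b)$ exactly.
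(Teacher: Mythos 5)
Your decomposition into the neighbourhoods $\mathcal{D}_i$ plus a complement on which $\mathcal{J}$ stays bounded away from zero is sound, and your treatment of part 3 (hypergeometric antiderivative from Lemma \ref{lem:diff}, analytic continuation via Lemma \ref{lem:analcont}, observing that the $\epsilon^{1/q-b}$ term is subdominant because $b<1/q$) is essentially the paper's own argument; your alternative remark that the local integral converges by monotone convergence to $\int_0^r C_0^{-b}\beta^{-qb}\,d\beta$ when $qb<1$ is a nice elementary shortcut for the qualitative statement. The last claim ($b\le 0$) is handled the same way in both.

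The genuine gap is in parts 1 and 2. You propose to run the same hypergeometric route there, but both auxiliary lemmas are stated with hypotheses that parts 1 and 2 do not, and cannot, assume. Lemma \ref{lem:diff} requires $b<2$, whereas (\ref{eq:generalestimate}) and (\ref{eq:generalestimate2}) must hold for every $b>1/q$ — and the paper applies them with $b=p+\tfrac1\lambda$, $p$ arbitrary, and $b=3+\tfrac1\lambda$, which exceed $2$. Likewise, Lemma \ref{lem:analcont} demands $a,b,c,a-b\notin\mathbb{Z}$, i.e.\ here $\tfrac1q$, $b$, $b-\tfrac1q\notin\mathbb{Z}$; part 3 explicitly carries these restrictions, but parts 1 and 2 carry none, and the applications routinely hit excluded values (e.g.\ $b=1+\tfrac1\lambda=2$ for $\lambda=1$, or $q=1$ so $\tfrac1q\in\mathbb{Z}$). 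So as written your argument only proves parts 1 and 2 for a restricted set of $(b,q)$. The paper avoids this entirely: for $b>1/q$ it substitutes $\sqrt{|C_1|/\epsilon}\,|\alpha-\overline\alpha_i|^{q/2}=\tan\theta$ in each one-sided local integral, which sends the upper limit to $\pi/2$ as $\epsilon\downarrow0$ and reduces the leading term to the \emph{complete} Beta integral $B\bigl(\tfrac1q,\,b-\tfrac1q\bigr)=\Gamma(\tfrac1q)\Gamma(b-\tfrac1q)/\Gamma(b)$, convergent precisely when $b>\tfrac1q$ and with no integrality or $b<2$ restrictions; the prefactor $\tfrac{4}{q}|C_1|^{-1/q}\epsilon^{1/q-b}$ then assembles into (\ref{eq:generalconstant}). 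To repair your proof you would either need to adopt this direct Beta-function computation for parts 1 and 2, or first extend Lemmas \ref{lem:analcont} and \ref{lem:diff} beyond their stated hypotheses (by a separate continuity/limiting argument in the parameters), which is exactly the work the trigonometric substitution is designed to avoid.
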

The outline of this section is as follows. In \S\ref{subsec:lin}, we examine $L^p$, $p\in[1,+\infty]$ regularity of solutions arising from initial data satisfying (\ref{eq:expnew0}) and/or (\ref{eq:expnew00}) for $q=1$. Then, in \S\ref{subsec:generalcase} the case of arbitrary $q\in\mathbb{R}^+$ is studied. Also, regularity results concerning a class of smooth initial data larger than the one studied in \cite{Sarria1} are discussed. We remark that the case $q=1$ is considered separately from the more general argument in \S\ref{subsec:generalcase}, due to the assumptions in Lemma \ref{lem:general}.

\subsection{Global Estimates and Blow-up for $q=1$}\hfill
\label{subsec:lin}

In \cite{Sarria1}, we showed that for a particular choice of piecewise linear $u_0'(\alpha)$, a special class of solutions to the 2D Euler equations ($\lambda=1$) could develop a singularity in finite-time, whereas, for the corresponding 3D problem $(\lambda=1/2)$, solutions may converge to a nontrivial steady state as $t\to+\infty$.\footnote[2]{see Theorem \ref{thm:sarria2} in \S \ref{sec:intro}.} Therefore, it is of particular interest to determine how these results generalize to initial data satisfying (\ref{eq:expnew0}) for $q=1$. In fact, in this section we will examine $L^p$ regularity in $u_x$ for $\lambda\in\mathbb{R}$ and $p\in[1,+\infty]$.

\subsubsection{$L^{\infty}$ Regularity for $q=1$}
\label{subsubsec:linlinfty}

\begin{theorem}
\label{thm:p=1}
Consider the initial boundary value problem (\ref{eq:nonhomo})-(\ref{eq:pbc}) with $u_0'(\alpha)$ satisfying, for $q=1$, either (\ref{eq:expnew0}) when $\lambda>0$, or (\ref{eq:expnew00}) if $\lambda<0$. It holds,
\begin{enumerate}

\item\label{it:two} For $\lambda>1/2$, there exists a finite $t_*>0$ such that both the maximum $M(t)$ and the minimum $m(t)$ diverge to $+\infty$ and respectively to $-\infty$ as $t\uparrow t_*$. Moreover, for every $\alpha\notin\bigcup_{i,j}\{\overline\alpha_i\}\cup\{\underline\alpha_j\},$\, $\lim_{t\uparrow t_*}u_x(\gamma(\alpha,t),t)=-\infty$ (two-sided, everywhere blow-up).

\item\label{it:one} For $\lambda\in[0,1/2]$, solutions exist globally in time. More particularly, these vanish as $t\uparrow t_*=+\infty$ for $\lambda\in(0,1/2)$ but converge to a nontrivial steady-state if $\lambda=1/2.$

\item\label{it:three} For $\lambda<0$, there is a finite $t_*>0$ such that only the minimum diverges, $m(t)\to-\infty,$ as $t\uparrow t_*$ (one-sided, discrete blow-up). 
\end{enumerate}
\end{theorem}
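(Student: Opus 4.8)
The plan is to analyze the behavior of the solution formula (\ref{eq:mainsolu}) as $\eta \uparrow \eta_*$, where $\eta_* = \frac{1}{\lambda M_0}$ for $\lambda>0$ (and $= \frac{1}{\lambda m_0}$ for $\lambda<0$), using the case $q=1$ versions of the estimates in Lemma \ref{lem:general}. The key observation is that for $q=1$ the threshold exponent $1/q = 1$ separates the regimes: the integrals $\bar{\mathcal K}_0(t) = \int_0^1 \mathcal{J}^{-1/\lambda}\,d\alpha$ and $\bar{\mathcal K}_1(t) = \int_0^1 \mathcal{J}^{-(1+1/\lambda)}\,d\alpha$ will either stay bounded, blow up logarithmically, or blow up algebraically depending on how $1/\lambda$ (resp. $1+1/\lambda$) compares to $1$. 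Concretely, $1/\lambda > 1 \iff \lambda < 1$ and $1/\lambda < 1 \iff \lambda > 1$; meanwhile $1 + 1/\lambda > 1 \iff \lambda > 0$. So for $\lambda > 1/2$ we have $b = 1 + 1/\lambda \in (1,3)$, which always exceeds $1/q = 1$, putting $\bar{\mathcal K}_1$ in the divergent regime of part 1 (resp. part 2) of Lemma \ref{lem:general}, while $\bar{\mathcal K}_0$ has $b = 1/\lambda$, which lies below $1$ precisely when $\lambda > 1$ (bounded, part 3) and above $1$ when $1/2 < \lambda < 1$ (divergent, part 1), with a separate logarithmic borderline case at $\lambda = 1$ that must be handled by hand since Lemma \ref{lem:general} excludes $b = 1/q$.

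For part (\ref{it:two}), I would first substitute the asymptotics $\bar{\mathcal K}_0 \sim C\,\mathcal{J}(\overline\alpha_i,t)^{1 - 1/\lambda}$ (or a constant/logarithm in the borderline subcases) and $\bar{\mathcal K}_1 \sim C'\,\mathcal{J}(\overline\alpha_i,t)^{-1/\lambda}$ into (\ref{eq:mainsolu}), and track the sign and magnitude of each factor. Evaluating at $\alpha = \overline\alpha_i$, the term $\mathcal{J}(\overline\alpha_i,t)^{-1} = (1-\lambda\eta M_0)^{-1} \to +\infty$ dominates and, after cancellation against $\bar{\mathcal K}_0^{2\lambda}$ and $\eta \to \eta_* > 0$, one checks $M(t) = u_x(\gamma(\overline\alpha_i,t),t) \to +\infty$. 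For a generic $\alpha$ away from all the extremal points, $\mathcal{J}(\alpha,t)$ stays bounded away from zero as $\eta\uparrow\eta_*$, so the $\mathcal{J}(\alpha,t)^{-1}$ term is bounded while the $-\bar{\mathcal K}_1/\bar{\mathcal K}_0$ term diverges to $-\infty$ (since $\bar{\mathcal K}_1$ grows faster than $\bar{\mathcal K}_0$ for $b=1+1/\lambda > b = 1/\lambda$); dividing by the overall positive prefactor $\frac{1}{\lambda\eta\bar{\mathcal K}_0^{2\lambda}}$ and checking that $\lambda\eta\bar{\mathcal K}_0^{2\lambda} \to$ something positive (or controlling the rate at which it may vanish), one gets $u_x(\gamma(\alpha,t),t)\to -\infty$, giving two-sided everywhere blow-up. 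The finiteness of $t_*$ follows by showing the integral in (\ref{eq:assympt}) converges as $\eta\uparrow\eta_*$, again via the Lemma \ref{lem:general} asymptotics for $\bar{\mathcal K}_0^{2\lambda}$ — the integrand behaves like $(\eta_*-\eta)^{2\lambda(1-1/\lambda)} = (\eta_*-\eta)^{2\lambda-2}$ near the endpoint, which is integrable exactly when $2\lambda - 2 > -1$, i.e. $\lambda > 1/2$, matching the hypothesis. The symmetric argument with $\underline\alpha_j$, $m_0 < 0$, and part 2 of Lemma \ref{lem:general} handles $m(t)\to-\infty$.

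For part (\ref{it:one}), $\lambda \in (0,1/2]$ puts $2\lambda - 2 \le -1$, so the integral (\ref{eq:assympt}) diverges and $t_* = +\infty$: solutions are global. To get the long-time behavior I would examine (\ref{eq:mainsolu}) as $\eta\to\eta_*$ (now corresponding to $t\to\infty$). For $\lambda\in(0,1/2)$ the prefactor $\frac{1}{\lambda\eta\bar{\mathcal K}_0^{2\lambda}}$ decays while the bracketed term stays controlled, forcing $u_x \to 0$ uniformly; for the critical case $\lambda = 1/2$ one shows the competing rates balance so that $u_x(\gamma(\alpha,\cdot),\cdot)$ converges to a nonzero limiting profile, identifiable as a steady state of (\ref{eq:nonhomo}) — this is the place to invoke the explicit solvability of (\ref{eq:nonhomo2}) when $\lambda I(t)$ stabilizes. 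For part (\ref{it:three}), $\lambda < 0$: here $\eta_* = \frac{1}{\lambda m_0} > 0$, and only $\mathcal{J}(\underline\alpha_j,t)^{-1}\to+\infty$ while $\mathcal{J}$ stays positive elsewhere; one checks via part 2 (or part 3, depending on the value of $1+1/\lambda$ relative to $1$) that $\bar{\mathcal K}_0,\bar{\mathcal K}_1$ behave so that only $m(t) = u_x(\gamma(\underline\alpha_j,t),t)\to-\infty$ while $u_x$ at other points — including the maximum — stays bounded, yielding one-sided discrete blow-up, with $t_* < \infty$ from convergence of (\ref{eq:assympt}) (here $2\lambda < 0$ so the relevant exponent analysis differs and one must be careful, but the integrand is bounded near the endpoint).

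The main obstacle I anticipate is the borderline values where Lemma \ref{lem:general} does not directly apply: $\lambda = 1$ (giving $b = 1/\lambda = 1 = 1/q$ for $\bar{\mathcal K}_0$, a logarithmic divergence) and, more delicately, pinning down the exact limiting steady-state profile at $\lambda = 1/2$ rather than merely boundedness. These require separate direct estimates of $\bar{\mathcal K}_0(t)$ — splitting the integral into a neighborhood of the extremal points (where the local expansion (\ref{eq:expnew0}) with $q=1$ gives an explicit $\int_0^r (\epsilon + |C_1|\,\sigma)^{-1}\,d\sigma \sim \log(1/\epsilon)$ type behavior) and the complement (where the integrand is bounded) — together with a careful bookkeeping of which terms in (\ref{eq:mainsolu}) dominate. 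A secondary subtlety is ensuring the overall prefactor $\lambda\eta(t)\bar{\mathcal K}_0(t)^{2\lambda}$ does not itself vanish or blow up in a way that spoils the conclusion at a generic $\alpha$; this needs the precise rate from Lemma \ref{lem:general}, not just leading-order divergence.
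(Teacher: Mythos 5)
Your proposal follows essentially the same route as the paper: derive the $q=1$ asymptotics of $\bar{\mathcal K}_0$ and $\bar{\mathcal K}_1$ near $\eta_*$ (necessarily by the direct computation of $\int(\epsilon+\left|C_1\right|\left|\alpha-\overline\alpha\right|)^{-b}d\alpha$ in the bounded cases, since for $q=1$ the restriction $\tfrac{1}{q}\notin\mathbb{Z}$ in Lemma \ref{lem:general}(3) fails throughout, not only at $\lambda=1$), substitute into (\ref{eq:mainsolu}), compare the $\mathcal{J}(\alpha,t)^{-1}$ term against $\bar{\mathcal K}_1/\bar{\mathcal K}_0$ at $\alpha=\overline\alpha_i$ versus generic $\alpha$, and read off finiteness of $t_*$ from the exponent $2\lambda-2>-1$, exactly as the paper does. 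Two harmless imprecisions: for $\lambda\in(0,1/2)$ the bracketed term does \emph{not} stay controlled — it diverges like $\lambda\,\mathcal{J}(\overline\alpha,t)^{-1}$ and is beaten by the prefactor's decay $\mathcal{J}(\overline\alpha,t)^{2-2\lambda}$, the net exponent $1-2\lambda$ being precisely what separates vanishing from blow-up; and for $\lambda<-1$ the boundedness of $\bar{\mathcal K}_1$ likewise requires the direct estimate (the paper's (\ref{eq:l-})) rather than Lemma \ref{lem:general}(3).
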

\begin{proof}

Let $C$ denote a positive constant which may depend on $\lambda\neq0$.

\vspace{0.02in}
\textbf{Proofs of Statements} (\ref{it:two}) \textbf{and} (\ref{it:one})
\vspace{0.02in}

For simplicity, we prove (\ref{it:two}) and (\ref{it:one}) for the case where $M_0$ occurs at a single location $\overline\alpha\in(0,1)$\footnote[3]{The case of finitely many $\overline\alpha_i\in[0,1]$ follows similarly.}. By (\ref{eq:expnew0}), there is $0<r\leq1$ small enough such that $\epsilon+M_0-u_0^\prime(\alpha)\sim\epsilon-C_1\left|\alpha-\overline\alpha\right|$ for $0\leq\left|\alpha-\overline\alpha\right|\leq r$, $C_1<0$ and $\epsilon>0$. Then
\begin{equation}
\label{eq:app}
\begin{split}
\int_{\overline\alpha-r}^{\overline\alpha+r}{\frac{d\alpha}{(\epsilon+M_0-u_0'(\alpha))^{\frac{1}{\lambda}}}}&\sim\int_{\overline\alpha-r}^{\overline\alpha+r}{\frac{d\alpha}{(\epsilon-C_1\left|\alpha-\overline\alpha\right|)^{\frac{1}{\lambda}}}}
\\
&=\frac{2\lambda}{\left|C_1\right|(1-\lambda)}\left(\epsilon^{1-\frac{1}{\lambda}}-(\epsilon+\left|C_1\right| r)^{1-\frac{1}{\lambda}}\right)
\end{split}
\end{equation}
for $\lambda\in(0,+\infty)\backslash\{1\}$. Consequently, setting $\epsilon=\frac{1}{\lambda\eta}-M_0$ in (\ref{eq:app}) gives
\begin{equation}
\label{eq:phiest1}
\begin{split}
\bar{\mathcal{K}}_0(t)\sim
\begin{cases}
C,\,\,\,\,\,\,\,\,\,\,\,\,\,\,\,\,\,\,\,\,\,\,\,\,\,\,\,&\lambda>1,
\\
\frac{2\lambda M_0}{\left|C_1\right|(1-\lambda)}\mathcal{J}(\overline\alpha,t)^{1-\frac{1}{\lambda}},\,\,\,\,\,\,\,\,\,\,\,&\lambda\in(0,1)
\end{cases}
\end{split}
\end{equation}
for $\eta_*-\eta>0$ small, $\eta_*=\frac{1}{\lambda M_0}$ and $\mathcal{J}(\overline\alpha,t)=1-\lambda\eta(t)M_0.$ Following a similar argument, or using Lemma \ref{lem:general}(1) with $b=1+\frac{1}{\lambda}$ and $q=1$, we estimate
\begin{equation}
\label{eq:phi12}
\begin{split}
\bar{\mathcal{K}}_1(t)\sim\frac{2\lambda M_0}{\left|C_1\right|}\mathcal{J}(\overline\alpha,t)^{-\frac{1}{\lambda}}
\end{split}
\end{equation}
for any $\lambda>0$. Suppose $\lambda>1.$ Then, (\ref{eq:mainsolu}), (\ref{eq:phiest1})i) and (\ref{eq:phi12}) give
\begin{equation}
\label{eq:est21} 
\begin{split}
u_x(\gamma(\alpha,t),t)\sim C\left(\frac{1}{\mathcal{J}(\alpha,t)}-\frac{C}{\mathcal{J}(\overline\alpha,t)^{\frac{1}{\lambda}}}\right)
\end{split}
\end{equation}
for $\eta_*-\eta>0$ small. Setting $\alpha=\overline\alpha$ into (\ref{eq:est21}) and using (\ref{eq:maxmin})i) implies that
\begin{equation*}
\begin{split}
M(t)\sim\frac{C}{\mathcal{J}(\overline\alpha,t)}\rightarrow+\infty
\end{split}
\end{equation*}
as $\eta\uparrow\eta_*$. However, if $\alpha\neq\overline\alpha,$ the second term in (\ref{eq:est21}) dominates and
\begin{equation*}
\begin{split}
u_x(\gamma(\alpha,t),t)\sim-\frac{C}{\mathcal{J}(\overline\alpha,t)^{\frac{1}{\lambda}}}\rightarrow-\infty.
\end{split}
\end{equation*}
The existence of a finite $t_*>0$ for all $\lambda>1$ follows from (\ref{eq:etaivp}) and (\ref{eq:phiest1})i), which imply
\begin{equation*}
\begin{split}
t_*-t\sim C(\eta_*-\eta).
\end{split}
\end{equation*} 
Now let $\lambda\in(0,1).$ Using (\ref{eq:phiest1})ii) and (\ref{eq:phi12}) on (\ref{eq:mainsolu}), yields
\begin{equation}
\label{eq:u1}
\begin{split}
u_x(\gamma(\alpha,t),t)\sim C\left(\frac{1}{\mathcal{J}(\alpha,t)}-\frac{1-\lambda}{\mathcal{J}(\overline\alpha,t)}\right)\mathcal{J}(\overline\alpha,t)^{2(1-\lambda)}
\end{split}
\end{equation}
for $\eta_*-\eta>0$ small. Setting $\alpha=\overline\alpha$ in (\ref{eq:u1}) implies
\begin{equation}
M(t)\sim C\mathcal{J}(\overline\alpha,t)^{1-2\lambda}\to
\begin{cases}
0,\,\,\,\,\,\,\,\,\,\,&\lambda\in(0,1/2),
\\
+\infty,\,\,\,\,\,\,\,\,\,&\lambda\in(1/2,1)
\end{cases}
\end{equation}
as $\eta\uparrow \eta_*.$ If instead $\alpha\neq\overline\alpha,$
\begin{equation}
u_x(\gamma(\alpha,t),t)\sim -C\mathcal{J}(\overline\alpha,t)^{1-2\lambda}\to
\begin{cases}
0,\,\,\,\,\,\,\,\,\,\,&\lambda\in(0,1/2),
\\
-\infty,\,\,\,\,\,\,\,\,\,&\lambda\in(1/2,1)
\end{cases}
\end{equation}
as $\eta\uparrow\eta_*.$ For the threshold parameter $\lambda=1/2,$ we keep track of the constants and find that, as $\eta\uparrow\eta_*,$
\begin{equation}
u_x(\gamma(\alpha,t),t)\to
\begin{cases}
\,\,\,\,\,\frac{\left|C_1\right|}{4},\,\,\,\,\,\,\,\,\,\,\,\,\,\,&\alpha=\overline\alpha
\\
-\frac{\left|C_1\right|}{4},\,\,\,\,\,\,\,\,\,\,\,\,\,&\alpha\neq\overline\alpha.
\end{cases}
\end{equation}
Finally, (\ref{eq:etaivp}) and (\ref{eq:phiest1})ii) imply that $dt\sim C\mathcal{J}(\overline\alpha,t)^{2(\lambda-1)}d\eta$ so that
\begin{equation*}
t_*=\lim_{\eta\uparrow\eta_*}t(\eta)\sim
\begin{cases}
\frac{C}{2\lambda-1}\left(C-\lim_{\eta\uparrow\eta_*}(\eta_*-\eta)^{2\lambda-1}\right),\,\,\,\,\,\,\,\,\,&\lambda\in(0,1)\backslash\{1/2\},
\\
-C\lim_{\eta\uparrow\eta_*}\log(\eta_*-\eta),\,\,\,\,\,\,\,\,\,\,\,&\lambda=1/2.
\end{cases}
\end{equation*}
As a result, $t_*=+\infty$ if $\lambda\in(0,1/2]$ but $0<t_*<+\infty$ for $\lambda\in(1/2,1)$. Lastly, 
\begin{equation}
\begin{split}
\label{eq:l1}
\bar{\mathcal{K}}_0(t)\sim-\frac{2M_0}{\left|C_1\right|}\,\text{log}(\eta_*-\eta)
\end{split}
\end{equation}
for $0<\eta_*-\eta<<1$ small and $\lambda=1$. Then, two-sided, everywhere blow-up in finite-time follows just as above from (\ref{eq:mainsolu}), (\ref{eq:etaivp}), (\ref{eq:phi12}) and (\ref{eq:l1}). Finally, the case $\lambda=0$ follows from the results in \cite{Sarria1}.

\vspace{0.02in}
\textbf{Proof of Statement} (\ref{it:three})
\vspace{0.02in}

For $\lambda<0$, set $\eta_*=\frac{1}{\lambda m_0}$. Then $\bar{\mathcal{K}}_0(t)$ remains finite, and positive, for all $\eta\in[0,\eta_*]$. In fact, one can easily show that 
\begin{equation}
\label{eq:quick3p}
\begin{split}
1\leq\bar{\mathcal{K}}_0(t)\leq\left(1+\frac{M_0}{\left|m_0\right|}\right)^{\frac{1}{\left|\lambda\right|}}
\end{split}
\end{equation}
if $\lambda\in[-1,0)$, while 
\begin{equation}
\label{eq:newestq}
\begin{split}
0<\int_0^1{\left(1+\frac{u_0'(\alpha)}{\left|m_0\right|}\right)^{\frac{1}{\left|\lambda\right|}}d\alpha}\leq\bar{\mathcal{K}}_0(t)\leq1
\end{split}
\end{equation} 
for $\lambda<-1$. Similarly, when $\lambda\in[-1,0)$ and $\eta\in[0,\eta_*]$, 
\begin{equation}
\label{eq:quick4p}
\begin{split}
1\leq\bar{\mathcal{K}}_1(t)\leq\left(\frac{\left|m_0\right|}{M_0+\left|m_0\right|}\right)^{1+\frac{1}{\lambda}}.
\end{split}
\end{equation}
However, if $\lambda<-1,$ we need to estimate $\bar{\mathcal{K}}_1(t)$ for $\eta_*-\eta>0$ small. To do so, we proceed analogously to the derivation of (\ref{eq:phiest1}). For simplicity, assume $u_0'(\alpha)$ achieves its least value $m_0<0$ at a single point $\underline\alpha\in(0,1)$. Then (\ref{eq:expnew00}) with $q=1$ implies that $u_0'(\alpha)\sim m_0+C_2\left|\alpha-\underline\alpha\right|$ for $0\leq\left|\alpha-\underline\alpha\right|\leq s$, $C_2>0$ and $0<s\leq1$. It follows that
\begin{equation}
\begin{split}
\label{eq:l-}
\int_{\underline\alpha-s}^{\underline\alpha+s}{\frac{d\alpha}{(\epsilon+u_0'(\alpha)-m_0)^{1+\frac{1}{\lambda}}}}&\sim\int_{\underline\alpha-s}^{\underline\alpha+s}{\frac{d\alpha}{(\epsilon+C_2\left|\alpha-\underline\alpha\right|)^{1+\frac{1}{\lambda}}}}
\\
&= \frac{2\left|\lambda\right|}{C_2}\left((\epsilon+C_2s)^{\frac{1}{\left|\lambda\right|}}-\epsilon^{\frac{1}{\left|\lambda\right|}}\right)
\end{split}
\end{equation}
for $\epsilon>0$. By substituting $\epsilon=m_0-\frac{1}{\lambda\eta}$ into (\ref{eq:l-}), we find that $\bar{\mathcal{K}}_1(t)$ has a finite, positive limit as $\eta\uparrow\eta_*$ for $\lambda<-1$. This implies that for $\lambda<0$, both time-dependent integrals in (\ref{eq:mainsolu}) remain bounded and positive for all $\eta\in[0,\eta_*]$. Consequently, blow-up of (\ref{eq:mainsolu}), as $\eta\uparrow\eta_*$, will follow from the space-dependent term, $\mathcal{J}(\alpha,t)^{-1}$, evaluated at $\alpha=\underline\alpha$. In this way, we set $\alpha=\underline\alpha$ into (\ref{eq:mainsolu}) and use (\ref{eq:maxmin})ii) to obtain
$$m(t)\sim\frac{Cm_0}{\mathcal{J}(\overline\alpha,t)}\to-\infty$$
as $\eta\uparrow\eta_*$. In contrast, for $\alpha\neq\underline\alpha,$ the definition of $m_0$ implies that the space-dependent term now remains bounded for $\eta\in[0,\eta_*]$. Finally, the existence of a finite blow-up time $t_*>0$ for the minimum follows from (\ref{eq:etaivp}) and the estimates on $\bar{\mathcal{K}}_0(t)$. In fact, by (\ref{eq:etaivp}), $t_*=\eta_*$ for $\lambda=-1$, while (\cite{Sarria1})
\begin{equation}
\begin{cases}
\label{eq:timebounds}
\eta_*\leq t_*<+\infty,\,\,\,&\lambda<-1,
\\
\eta_*\left(1-\frac{M_0}{m_0}\right)^{-2}\leq t_*\leq\eta_*,\,\,\,\,\,&\lambda\in(-1,0).
\end{cases}
\end{equation}
See \S\ref{sec:examples} for examples.\end{proof}

In preparation for the next section, we recall some formulas, as well as upper and lower bounds, derived in \cite{Sarria1} for the $L^p$ norm of $u_x$. For as long as a solution exists, (\ref{eq:sum}) and (\ref{eq:mainsolu}) imply that
\begin{equation*}
\begin{split}
\left\|u_x(\cdot,t)\right\|_p^p=\frac{1}{\left|\lambda\eta(t)\right|^p\bar{\mathcal{K}}_0(t)^{^{1+2\lambda p}}}\int_0^1{\left|\frac{1}{\mathcal{J}(\alpha,t)^{^{1+\frac{1}{\lambda p }}}}-\frac{\bar{\mathcal{K}}_1(t)}{\bar{\mathcal{K}}_0(t)\mathcal{J}(\alpha,t)^{\frac{1}{\lambda p}}}\right|^pd\alpha}
\end{split}
\end{equation*}
for $\lambda\neq 0$ and $p\in[1,+\infty)$. Using the above and some standard inequalities yields
\begin{equation}
\label{eq:upper}
\begin{split}
\left\|u_x(\cdot,t)\right\|_p^p\leq\frac{2^{p-1}}{\left|\lambda\eta(t)\right|^p\bar{\mathcal{K}}_0(t)^{^{1+2\lambda p}}}\left(\int_0^1{\frac{d\alpha}{\mathcal{J}(\alpha,t)^{^{p+\frac{1}{\lambda }}}}}+\frac{\bar{\mathcal{K}}_1(t)^{^p}}{\bar{\mathcal{K}}_0(t)^{^{p-1}}}\right)
\end{split}
\end{equation}
and
\begin{equation}
\label{eq:lower}
\begin{split}
\left\|u_x(\cdot,t)\right\|_p\geq\frac{1}{\left|\lambda\eta(t)\right|\bar{\mathcal{K}}_0(t)^{^{2\lambda+\frac{1}{p}}}}\left|\int_0^1{\frac{d\alpha}{\mathcal{J}(\alpha,t)^{^{1+\frac{1}{\lambda p }}}}}-\frac{\bar{\mathcal{K}}_1(t)}{\bar{\mathcal{K}}_0(t)}\int_0^1{\frac{d\alpha}{\mathcal{J}(\alpha,t)^{\frac{1}{\lambda p}}}}\right|.
\end{split}
\end{equation}
Moreover, the energy function $E(t)\equiv\left\|u_x(\cdot,t)\right\|_2^2$ is explicitly given by
\begin{equation}
\label{eq:energy}
\begin{split}
E(t)=\left(\lambda\eta(t)\bar{\mathcal{K}}_0(t)^{1+2\lambda}\right)^{-2}\left(\bar{\mathcal{K}}_0(t)\bar{\mathcal{K}}_2(t)-\bar{\mathcal{K}}_1(t)^2\right).
\end{split}
\end{equation}
Lastly, multiplying (\ref{eq:nonhomo})i) by $u_x$, integrating by parts, and using (\ref{eq:pbc}), (\ref{eq:sum}) and (\ref{eq:mainsolu}), gives
\begin{equation}
\label{eq:derenergy}
\begin{split}
\dot E(t)&=(1+2\lambda)\int_0^1{u_x(x,t)^3dx}=(1+2\lambda)\int_0^1{u_x(\gamma(\alpha,t),t)^3\gamma_\alpha(\alpha,t)\,d\alpha}
\\
&=\frac{1+2\lambda}{(\lambda\eta(t))^3}
\left[\frac{\bar{\mathcal{K}}_3(t)}{\bar{\mathcal{K}}_1(t)}-\frac{3\bar{\mathcal{K}}_2(t)}{\bar{\mathcal{K}}_0(t)}+2\left(\frac{\bar{\mathcal{K}}_1(t)}{\bar{\mathcal{K}}_0(t)}\right)^2\right]\frac{\bar{\mathcal{K}}_1(t)}{\bar{\mathcal{K}}_0(t)^{1+6\lambda}}.
\end{split}
\end{equation}
The reader may refer to \cite{Sarria1} for details on the above.

\subsubsection{Further $L^p$ Regularity for $\lambda\neq0$, $q=1$ and $p\in[1,+\infty)$}\hfill
\label{subsubsec:linlp}

In the previous section, we established the existence of a finite $t_*>0$ such that $\left\|u_x\right\|_{\infty}$ diverges as $t\uparrow t_*$ for all $\lambda\in\mathbb{R}\backslash[0,1/2]$ and initial data satisfying (\ref{eq:expnew0}) and/or (\ref{eq:expnew00}) for $q=1$ relative to the sign of $\lambda$. If instead, $\lambda\in[0,1/2]$, we proved that solutions remain in $L^{\infty}$ for all time. In this section, we examine further $L^p$ regularity of $u_x$, as $t\uparrow t_*$, for $\lambda\in\mathbb{R}\backslash[0,1/2]$ and $p\in[1,+\infty)$. 

\begin{theorem}
\label{thm:q=1,p>=1}
For the initial boundary value problem (\ref{eq:nonhomo})-(\ref{eq:pbc}), let $t_*>0$ denote the finite $L^{\infty}$ blow-up time for $u_x$ in Theorem \ref{thm:p=1}. Further, for $q=1$, suppose $u_0'(\alpha)$ satisfies (\ref{eq:expnew0}) when $\lambda>0$, or (\ref{eq:expnew00}) if $\lambda<0$.
\begin{enumerate}

\item\label{it:onep} For $\lambda>1/2$ and $p>1$, $\lim_{t\uparrow t_*}\left\|u_x\right\|_p=+\infty$.

\item\label{it:twop} For $\lambda<0$ and $t\in[0,t_*]$, $u_x$ remains integrable; moreover, if $\frac{1}{1-p}<\lambda<0$ and $p>1$, then $u_x\in L^p$ for all $t\in[0,t_*]$. 

\item\label{it:threep} The energy $E(t)=\left\|u_x\right\|_2^2$ diverges if $\lambda\in(-\infty,-1]\cup(1/2,+\infty)$ as $t\uparrow t_*$ but remains finite for $t\in[0,t_*]$ if $\lambda\in(-1,0)$. Also, $\lim_{t\uparrow t_*}\dot E(t)=+\infty$ when $\lambda\in(-\infty,-1/2)\cup(1/2,+\infty)$, whereas, $\dot E(t)\equiv0$ if $\lambda=-1/2$ while $\dot E(t)$ stays bounded for $t\in[0,t_*]$ if $\lambda\in(-1/2,0)$.

\end{enumerate}
\end{theorem}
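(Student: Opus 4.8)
The plan is to reduce all three parts to sharp asymptotics, as $\eta\uparrow\eta_*$, of the integrals $\bar{\mathcal{K}}_i(t)=\int_0^1\mathcal{J}(\alpha,t)^{-(i+1/\lambda)}\,d\alpha$ for $i=0,1,2,3$, and then to insert these into the formula for $\|u_x\|_p^p$ preceding (\ref{eq:upper}), the bounds (\ref{eq:upper})--(\ref{eq:lower}), the energy identity (\ref{eq:energy}), and the expression (\ref{eq:derenergy}) for $\dot E$. Since $q=1$, $u_0'$ is locally affine about each extremal point, so for $\lambda>0$ one has $\mathcal{J}(\alpha,t)=\mathcal{J}(\overline\alpha_i,t)+\lambda\eta|C_1|\,|\alpha-\overline\alpha_i|$ near $\overline\alpha_i$ (and analogously near $\underline\alpha_j$ when $\lambda<0$), whence an elementary integration gives, for any exponent $b$,
\begin{equation*}
\int_0^1\frac{d\alpha}{\mathcal{J}(\alpha,t)^{b}}\sim
\begin{cases}
C\,\mathcal{J}(\overline\alpha_i,t)^{1-b}, &\ b>1,\\[3pt]
-C\log(\eta_*-\eta), &\ b=1,\\[3pt]
C, &\ b<1,
\end{cases}
\end{equation*}
with $C>0$ explicit (the case $b>1$ is Lemma \ref{lem:general}(1) with $q=1$; the borderline and subcritical cases are done by hand, since Lemma \ref{lem:general}(3) excludes $1/q\in\mathbb{Z}$). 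In particular, generalising (\ref{eq:phiest1})--(\ref{eq:phi12}): $\bar{\mathcal{K}}_i\sim C\,\mathcal{J}(\overline\alpha_i,t)^{1-i-1/\lambda}$ whenever $i+1/\lambda>1$, while $\bar{\mathcal{K}}_0\sim C$ for $\lambda>1$ and $\bar{\mathcal{K}}_0\sim-C\log(\eta_*-\eta)$ for $\lambda=1$; for $\lambda<0$ the same holds with $\mathcal{J}(\underline\alpha_j,t)$, and $\bar{\mathcal{K}}_0,\bar{\mathcal{K}}_1$ stay bounded and bounded away from $0$ on $[0,\eta_*]$, as in the proof of Theorem \ref{thm:p=1}.

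For (\ref{it:onep}) I would write $\|u_x\|_p^p$ via the formula before (\ref{eq:upper}): near $\overline\alpha_i$ the first term of the integrand dominates, so the spatial integral is governed by $\int_0^1\mathcal{J}^{-(p+1/\lambda)}\,d\alpha\sim C\,\mathcal{J}(\overline\alpha_i,t)^{1-p-1/\lambda}$ (note $p+1/\lambda>1$ always), and a short comparison shows this local contribution dominates that of $\alpha$ bounded away from the $\overline\alpha_i$. Dividing by $|\lambda\eta|^p\bar{\mathcal{K}}_0^{1+2\lambda p}$ and inserting the $\bar{\mathcal{K}}_0$-asymptotics (three cases $\lambda>1$, $\lambda=1$, $\tfrac12<\lambda<1$, differing only through $\bar{\mathcal{K}}_0$) yields $\|u_x\|_p^p\sim C\,\mathcal{J}(\overline\alpha_i,t)^{p(1-2\lambda)}$ for $\tfrac12<\lambda\le1$ (with an extra power of $|\log(\eta_*-\eta)|$ at $\lambda=1$) and $\sim C\,\mathcal{J}(\overline\alpha_i,t)^{1-p-1/\lambda}$ for $\lambda>1$; since $1-2\lambda<0$ and $p+1/\lambda>1$, this diverges as $t\uparrow t_*$. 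For (\ref{it:twop}), recall from Theorem \ref{thm:p=1} that for $\lambda<0$ both $\bar{\mathcal{K}}_0,\bar{\mathcal{K}}_1$ remain bounded and bounded below on $[0,\eta_*]$, with $\eta_*,t_*<\infty$; hence in the $L^p$ formula the only possible loss of integrability comes from the local integral about $\underline\alpha_j$, where $|\cdot|^p\sim\mathcal{J}^{-(p+1/\lambda)}$ and $\mathcal{J}(\alpha,t_*)\sim C\,|\alpha-\underline\alpha_j|$. Thus $\|u_x\|_p^p<\infty$ on $[0,t_*]$ exactly when $p+1/\lambda<1$: for $p=1$ this reads $1-1/|\lambda|<1$, always true, so $u_x\in L^1$ up to $t_*$; for $p>1$ it reads $|\lambda|<1/(p-1)$, i.e. $\tfrac1{1-p}<\lambda<0$.

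For (\ref{it:threep}) I would use (\ref{eq:energy}): $E(t)=(\lambda\eta)^{-2}\bar{\mathcal{K}}_0^{-(2+4\lambda)}(\bar{\mathcal{K}}_0\bar{\mathcal{K}}_2-\bar{\mathcal{K}}_1^2)$. For $\lambda<0$, $\bar{\mathcal{K}}_0,\bar{\mathcal{K}}_1$ are bounded and bounded below, whereas $\bar{\mathcal{K}}_2$ has exponent $2-1/|\lambda|$, which exceeds $1$ precisely when $|\lambda|>1$ (equals $1$ at $|\lambda|=1$); so $E\to+\infty$ for $\lambda\le-1$ while $E$ stays bounded on $[0,t_*]$ for $-1<\lambda<0$. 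For $\lambda>1$ and $\lambda=1$, $\bar{\mathcal{K}}_0$ is bounded (resp. logarithmic) while $\bar{\mathcal{K}}_1,\bar{\mathcal{K}}_2$ blow up, so $\bar{\mathcal{K}}_0\bar{\mathcal{K}}_2$ dominates $\bar{\mathcal{K}}_1^2$ and $E\to+\infty$. In the threshold range $\tfrac12<\lambda<1$, all of $\bar{\mathcal{K}}_0,\bar{\mathcal{K}}_1,\bar{\mathcal{K}}_2$ blow up and $\bar{\mathcal{K}}_0\bar{\mathcal{K}}_2$ and $\bar{\mathcal{K}}_1^2$ are of the \emph{same} order $\mathcal{J}(\overline\alpha_i,t)^{-2/\lambda}$, so I would substitute $\bar{\mathcal{K}}_i\sim\tfrac{2m}{k(i+1/\lambda-1)}\mathcal{J}(\overline\alpha_i,t)^{1-i-1/\lambda}$ ($k=|C_1|/M_0$) and compute: $\bar{\mathcal{K}}_0\bar{\mathcal{K}}_2-\bar{\mathcal{K}}_1^2\sim C\,\mathcal{J}(\overline\alpha_i,t)^{-2/\lambda}$ with $C>0$ (a positive multiple of $\lambda^4/(1-\lambda^2)$), whence $E\sim C\,\mathcal{J}(\overline\alpha_i,t)^{2(1-2\lambda)}\to+\infty$. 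For $\dot E$ I would use (\ref{eq:derenergy}): the prefactor $1+2\lambda$ forces $\dot E\equiv0$ at $\lambda=-\tfrac12$; for $\lambda<0$ the new ingredient is $\bar{\mathcal{K}}_3$, with exponent $3-1/|\lambda|$ exceeding $1$ exactly when $|\lambda|>\tfrac12$ (equal at $|\lambda|=\tfrac12$), giving $\dot E\to+\infty$ for $\lambda<-\tfrac12$ and, since then all $\bar{\mathcal{K}}_i$ are bounded, $\dot E$ bounded on $[0,t_*]$ for $-\tfrac12<\lambda<0$; for $\lambda>\tfrac12$ I would again insert the $\bar{\mathcal{K}}_i$-asymptotics, the bracket $\bar{\mathcal{K}}_3-3\bar{\mathcal{K}}_1\bar{\mathcal{K}}_2/\bar{\mathcal{K}}_0+2\bar{\mathcal{K}}_1^3/\bar{\mathcal{K}}_0^2$ reducing — for $\tfrac12<\lambda<1$, where its three terms are of the same order — to a positive multiple of $(2-1/\lambda)\,\mathcal{J}(\overline\alpha_i,t)^{-2-1/\lambda}$, so that $\dot E\sim C\,\mathcal{J}(\overline\alpha_i,t)^{3(1-2\lambda)}\to+\infty$, the cases $\lambda\ge1$ being easier since $\bar{\mathcal{K}}_0$ no longer blows up.

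The hard part will be the threshold range $\tfrac12<\lambda<1$ in part (\ref{it:threep}): there the leading powers of $\mathcal{J}(\overline\alpha_i,t)$ in the several terms of $\bar{\mathcal{K}}_0\bar{\mathcal{K}}_2-\bar{\mathcal{K}}_1^2$ — and, more delicately, in the three terms of the cubic expression governing $\dot E$ — coincide, so a bare order count is inconclusive and one must evaluate the leading constants (rational functions of $1/\lambda$ assembled from the coefficients $2m/(k(i+1/\lambda-1))$) and verify that they neither cancel nor change sign across $\tfrac12<\lambda<1$. Everything else is careful bookkeeping around the elementary estimate in the first paragraph.
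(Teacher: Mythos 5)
Your proposal is correct in substance and reaches all the stated conclusions, but it departs from the paper's route in two places worth noting. For part (\ref{it:onep}) the paper works with the lower bound (\ref{eq:lower}), whose spatial integrals carry the exponents $1+\frac{1}{\lambda p}$ and $\frac{1}{\lambda p}$; it checks non-cancellation through the explicit constant $\left|1-\frac{1-\lambda}{1-\lambda p}\right|$, obtains divergence only for $p-1>0$ small when $\lambda\in(1/2,1)$, and then upgrades to all $p>1$ via the embedding $L^s\hookrightarrow L^p$. You instead work with the exact formula for $\left\|u_x\right\|_p^p$ and the exponent $p+\frac{1}{\lambda}$, which gives the divergence rate $\mathcal{J}(\overline\alpha,t)^{p(1-2\lambda)}$ directly for every $p>1$ — sharper output, but your justification that ``the first term of the integrand dominates'' is not literally true for $\frac{1}{2}<\lambda\leq1$: there $\bar{\mathcal{K}}_1/\bar{\mathcal{K}}_0\sim\mathcal{J}(\overline\alpha,t)^{-1}$, and the integral of the $p$-th power of the second term over the neighbourhood of $\overline\alpha$ is of the \emph{same} order $\mathcal{J}(\overline\alpha,t)^{1-p-\frac{1}{\lambda}}$ as that of the first. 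The estimate survives because the integrand is a non-negative $p$-th power and the first term dominates pointwise on the inner region $\left|\alpha-\overline\alpha\right|\lesssim\eta_*-\eta$, which alone already contributes at the claimed order; you should say this (or do the constant check you already flag for part (\ref{it:threep})). Part (\ref{it:twop}) is essentially the paper's argument verbatim — upper bound (\ref{eq:upper}), boundedness of $\bar{\mathcal{K}}_0,\bar{\mathcal{K}}_1$, and the integrability criterion $p+\frac{1}{\lambda}<1$ — and your ``exactly when'' even strengthens the paper's one-directional claim. In part (\ref{it:threep}) the paper disposes of $\lambda>1/2$ by citing part (\ref{it:onep}) (with $p=2$, resp.\ $p=3$), whereas you recompute $E$ and $\dot E$ from (\ref{eq:energy}) and (\ref{eq:derenergy}) with explicit leading constants ($\lambda^4/(1-\lambda^2)>0$ etc.) in the threshold range $\frac{1}{2}<\lambda<1$; this is more labour but is arguably more honest for $\dot E$, since $\dot E=(1+2\lambda)\int_0^1 u_x^3\,dx$ is not controlled by $\left\|u_x\right\|_3^3$ from below without an argument about the sign structure of $u_x$. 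The $\lambda<0$ portion (via $\bar{\mathcal{K}}_2$, $\bar{\mathcal{K}}_3$ and the prefactor $1+2\lambda$) coincides with the paper's.
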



\begin{proof}
Let $C$ denote a positive constant that may depend on the choice of $\lambda$ and $p\in[1,+\infty)$.

\vspace{0.02in}
\textbf{Proof of Statement} (\ref{it:onep})
\vspace{0.02in}

First, suppose $\lambda>0$ and set $\eta_*=\frac{1}{\lambda M_0}$. For simplicity, we prove part (\ref{it:onep}) under the assumption that $M_0>0$ occurs at a single point $\overline\alpha\in(0,1)$. Using Lemma \ref{lem:general}(1) with $b=1+\frac{1}{\lambda p}$, $q=1$ and $p\geq1$, yields

\begin{equation}
\label{eq:p13}
\begin{split}
\int_0^1{\frac{d\alpha}{\mathcal{J}(\alpha,t)^{1+\frac{1}{\lambda p}}}}\sim\frac{2\lambda pM_0}{\left|C_1\right|}\mathcal{J}(\overline\alpha,t)^{-\frac{1}{\lambda p}}
\end{split}
\end{equation}
for $\eta_*-\eta>0$ small. Similarly, taking $b=p+\frac{1}{\lambda}$ we find that
\begin{equation}
\label{eq:p141}
\begin{split}
\int_0^1{\frac{d\alpha}{\mathcal{J}(\alpha,t)^{p+\frac{1}{\lambda}}}}\sim \frac{2\lambda M_0}{\left|C_1\right|(\lambda(p-1)+1)}\mathcal{J}(\overline\alpha,t)^{1-p-\frac{1}{\lambda}}.
\end{split}
\end{equation}
Moreover, following the argument that led to estimate (\ref{eq:phiest1}), with $\frac{1}{\lambda p}$ instead of $\frac{1}{\lambda}$, gives
\begin{equation}
\label{eq:p14}
\int_0^1{\frac{d\alpha}{\mathcal{J}(\alpha,t)^{\frac{1}{\lambda p}}}}\sim
\begin{cases}
\frac{2\lambda pM_0}{\left|C_1\right|(1-\lambda p)}\mathcal{J}(\overline\alpha,t)^{1-\frac{1}{\lambda p}},\,\,&\lambda\in(0,1/p),
\\
C,\,\,\,\,&\lambda>1/p
\end{cases}
\end{equation}
for $p\geq1$ and $\eta_*-\eta>0$ small. 
Suppose $\lambda, p>1$ so that $\lambda>1/p$. Then, using (\ref{eq:phiest1})i), (\ref{eq:phi12}), (\ref{eq:p13}) and (\ref{eq:p14})ii) in (\ref{eq:lower}), implies that 
\begin{equation*}
\begin{split}
\left\|u_x(\cdot,t)\right\|_p&\geq\frac{1}{\left|\lambda\eta(t)\right|\bar{\mathcal{K}}_0(t)^{^{2\lambda+\frac{1}{p}}}}\left|\int_0^1{\frac{d\alpha}{\mathcal{J}(\alpha,t)^{^{1+\frac{1}{\lambda p }}}}}-\frac{\bar{\mathcal{K}}_1(t)}{\bar{\mathcal{K}}_0(t)}\int_0^1{\frac{d\alpha}{\mathcal{J}(\alpha,t)^{\frac{1}{\lambda p}}}}\right|
\\
&\sim C\left|C\mathcal{J}(\overline\alpha,t)^{-\frac{1}{\lambda p}}-\mathcal{J}(\overline\alpha,t)^{-\frac{1}{\lambda}}\right|
\\
&\sim C\mathcal{J}(\overline\alpha,t)^{-\frac{1}{\lambda}}\to+\infty
\end{split}
\end{equation*}
as $\eta\uparrow\eta_*$. Next, let $p\in(1,2)$ and $\lambda\in(1/2,1/p)\subset(1/2,1)$. Then, using (\ref{eq:phiest1})ii), (\ref{eq:phi12}), (\ref{eq:p13}) and (\ref{eq:p14})i) in (\ref{eq:lower}), gives
\begin{equation*}
\begin{split}
\left\|u_x(\cdot,t)\right\|_p&\geq\frac{1}{\left|\lambda\eta(t)\right|\bar{\mathcal{K}}_0(t)^{^{2\lambda+\frac{1}{p}}}}\left|\int_0^1{\frac{d\alpha}{\mathcal{J}(\alpha,t)^{^{1+\frac{1}{\lambda p }}}}}-\frac{\bar{\mathcal{K}}_1(t)}{\bar{\mathcal{K}}_0(t)}\int_0^1{\frac{d\alpha}{\mathcal{J}(\alpha,t)^{\frac{1}{\lambda p}}}}\right|
\\
&\sim C\left|1-\frac{1-\lambda}{1-\lambda p}\right|\mathcal{J}(\overline\alpha,t)^{\rho(\lambda,p)}
\\
&=C\mathcal{J}(\overline\alpha,t)^{\rho(\lambda,p)}
\end{split}
\end{equation*}
for $\eta_*-\eta>0$ small and $\rho(\lambda,p)=2(1-\lambda)-\frac{1}{p}$. However, for $\lambda$ and $p$ as prescribed, we see that $\rho(\lambda,p)<0$ for $1-\frac{1}{2p}<\lambda<\frac{1}{p}$ and $p\in(1,3/2)$. Therefore, for any $\lambda\in(1/2,1)$ there is $1-p>0$ arbitrarily small such that $\left\|u_x\right\|_p\to+\infty$ as $\eta\uparrow\eta_*$. Finally, if $\lambda=1$ we have $\lambda>1/p$ for $p>1$, as a result, (\ref{eq:phi12}), (\ref{eq:l1}), (\ref{eq:p13}) and (\ref{eq:p14})iii) imply that
\begin{equation*}
\begin{split}
\left\|u_x(\cdot,t)\right\|_p&\geq\frac{1}{\left|\lambda\eta(t)\right|\bar{\mathcal{K}}_0(t)^{^{2\lambda+\frac{1}{p}}}}\left|\int_0^1{\frac{d\alpha}{\mathcal{J}(\alpha,t)^{^{1+\frac{1}{\lambda p }}}}}-\frac{\bar{\mathcal{K}}_1(t)}{\bar{\mathcal{K}}_0(t)}\int_0^1{\frac{d\alpha}{\mathcal{J}(\alpha,t)^{\frac{1}{\lambda p}}}}\right|
\\
&\sim C\mathcal{J}(\overline\alpha,t)^{-1}(-\log(\eta_*-\eta))^{-3-\frac{1}{p}}
\end{split}
\end{equation*}
for $0<\eta_*-\eta<<1$ small, and so, $\left\|u_x\right\|_p\to+\infty$ as $\eta\uparrow\eta_*$. The existence of a finite blow-up time $t_*>0$ follows from Theorem \ref{thm:p=1}. 

\vspace{0.02in}
\textbf{Proof of Statement} (\ref{it:twop})
\vspace{0.02in}

Suppose $\lambda<0$ and set $\eta_*=\frac{1}{\lambda m_0}$. First, recall from the proof of Theorem \ref{thm:p=1} that $\bar{\mathcal{K}}_i(t),\, i=0,1$ remain finite and positive for all $\eta\in[0,\eta_*]$. Furthermore, in Theorem \ref{thm:p=1} we established the existence of a finite blow-up time $t_*>0$ for the minimum $m(t)$. Consequently, the upper bound (\ref{eq:upper}) implies that
\begin{equation}
\label{eq:ref2}
\begin{split}
\lim_{t\uparrow t_*}\left\|u_x(\cdot,t)\right\|_p<+\infty\,\,\,\,\,\,\,\Leftrightarrow\,\,\,\,\,\,\,\lim_{t\uparrow t_*}\int_0^1{\frac{d\alpha}{\mathcal{J}(\alpha,t)^{p+\frac{1}{\lambda}}}}<+\infty
\end{split}
\end{equation}
for $\lambda<0$ and $p\geq1$. However, if $p=1$, (\ref{eq:ref2})ii) is just $\bar{\mathcal{K}}_1(t)$, which remains finite as $t\uparrow t_*$. As a result, $u_x\in L^1$ for all $t\in[0,t_*]$ and $\lambda<0$.
If $p>1$, we recreate the argument in (\ref{eq:l-}), with $p+\frac{1}{\lambda}$ instead of $1+\frac{1}{\lambda}$, and find that for
$\frac{1}{1-p}<\lambda<0$ and $p>1$, 
the integral remains finite and positive as $\eta\uparrow\eta_*$. Consequently, (\ref{eq:ref2}) implies that 
$$\lim_{t\uparrow t_*}\left\|u_x(\cdot,t)\right\|_p<+\infty$$
for all $\frac{1}{1-p}<\lambda<0$ and $p>1$. We remark that the lower bound (\ref{eq:lower}) yields no information regarding $L^p$ blow-up of $u_x$, as $t\uparrow t_*$, for parameter values $-\infty<\lambda<\frac{1}{1-p}$, $p>1$. Nonetheless, we can use (\ref{eq:energy}) and (\ref{eq:derenergy}) to obtain additional blow-up information on energy-related quantities. 

\vspace{0.02in}
\textbf{Proof of Statement} (\ref{it:threep})
\vspace{0.02in}

For $\lambda>1/2$, blow-up of $E(t)$ and $\dot E(t)$ to $+\infty$ as $t\uparrow t_*$ is a consequence of part (\ref{it:onep}) above. Further, setting $p=2$ in part (\ref{it:twop}) implies that $E(t)$ remains bounded for all $\lambda\in(-1,0)$ and $t\in[0,t_*]$. Now, (\ref{eq:derenergy})i) yields
\begin{equation}
\label{eq:l3} 
\left|\dot E(t)\right|\leq\left|1+2\lambda\right|\left\|u_x(\cdot,t)\right\|_3^3,
\end{equation}
and so setting $p=3$ in part (\ref{it:twop}) implies that $\dot E(t)$ remains finite for $\lambda\in[-1/2,0)$ and $t\in[0,t_*]$. According to these results, we have yet to determine the behaviour of $E(t)$ as $t\uparrow t_*$ for $\lambda\leq-1$ and $\dot E(t)$ when $\lambda<-1/2$. To do so, we will use formulas (\ref{eq:energy}) and (\ref{eq:derenergy}). From Lemma \ref{lem:general}(2) with $b=3+\frac{1}{\lambda}$, $q=1$ and $\lambda<-1/2$, we find that 
\begin{equation}
\label{eq:k3}
\bar{\mathcal{K}}_3(t)\sim \frac{2\lambda\left|m_0\right|}{C_2(1+2\lambda)}\mathcal{J}(\underline\alpha,t)^{-2-\frac{1}{\lambda}}
\end{equation}
for $\eta_*-\eta>0$ small. Also, following the argument in (\ref{eq:l-}), with $2+\frac{1}{\lambda}$ instead of $1+\frac{1}{\lambda}$, we derive 
\begin{equation}
\label{eq:k2}
\bar{\mathcal{K}}_2(t)\sim
\begin{cases}
\frac{2\lambda\left|m_0\right|}{C_2(1+\lambda)}\mathcal{J}(\underline\alpha,t)^{-1-\frac{1}{\lambda}},\,\,\,\,&\lambda<-1,
\\
-C\log(\eta_*-\eta),\,\,\,\,&\lambda=-1,
\\
C,\,\,\,\,\,\,\,&\lambda\in(-1,0).
\end{cases}
\end{equation}
Since both $\bar{\mathcal{K}}_i(t)$, $i=0,1$ stay finite and positive for all $\eta\in[0,\eta_*]$ and $\lambda<0$, (\ref{eq:energy}) tells us that blow-up in $\bar{\mathcal{K}}_2(t)$ leads to a diverging $E(t)$. Then, (\ref{eq:k2})i) implies that for $\lambda<-1$,
$$E(t)\sim C\mathcal{J}(\underline\alpha,t)^{-1-\frac{1}{\lambda}}\to+\infty$$
as $\eta\uparrow\eta_*$. Similarly for $\lambda=-1$ by using (\ref{eq:k2})ii) instead. Clearly, this also implies blow-up of $\dot E(t)$ to $+\infty$ as $t\uparrow t_*$ for all $\lambda\leq-1$. Finally, from (\ref{eq:derenergy})ii), (\ref{eq:k3}) and (\ref{eq:k2})iii), 
$$\dot E(t)\sim\frac{Cm_0^3(1+2\lambda)}{\mathcal{J}(\underline\alpha,t)^{2+\frac{1}{\lambda}}}\to+\infty$$
as $\eta\uparrow\eta_*$ for all $\lambda\in(-1,-1/2)$. The existence of a finite $t_*>0$ follows from Theorem \ref{thm:p=1}(\ref{it:three}).\end{proof}

From the results established thus far, we are able to obtain a complete description of the $L^3$ regularity for $u_x$: if $\lambda\in[0,1/2]$, $\lim_{t\to +\infty}\left\|u_x\right\|_3=C$ where $C\in\mathbb{R}^+$ for $\lambda=1/2$ but $C=0$ if $\lambda\in(0,1/2)$, while, for $t_*>0$ the finite $L^{\infty}$ blow-up time for $u_x$ in Theorem \ref{thm:p=1},
\begin{equation}
\label{eq:l3q=1}
\lim_{t\uparrow t_*}\left\|u_x(\cdot,t)\right\|_3=
\begin{cases}
+\infty,\,\,\,\,\,\,\,&\lambda\in(-\infty,-1/2]\cup(1/2,+\infty),
\\
C\in\mathbb{R}^+,\,\,\,\,&\lambda\in(-1/2,0).
\end{cases}
\end{equation}

\begin{remark}
For $t_*>0$ the finite $L^{\infty}$ blow-up time for $u_x$ in Theorem \ref{thm:p=1}, we may use (\ref{eq:derenergy}), (\ref{eq:k3}) and (\ref{eq:k2}), as well as Theorem \ref{thm:q=1,p>=1}, to establish a global bound on $\int_0^1{u_x^3dx}$ if $\lambda\in[0,1/2]$, or for $t\in[0,t_*]$ when $\lambda\in(-1/2,0)$, whereas
\begin{equation}
\label{eq:3integral}
\lim_{t\uparrow t_*}\int_0^1{u_x(x,t)^3dx}=
\begin{cases}
+\infty,\,\,\,\,\,\,\,&\lambda>1/2,
\\
-\infty,\,\,\,\,&\lambda\leq-1/2.
\end{cases}
\end{equation}
We also note that, unlike the result in Theorem \ref{thm:lpintro}(\ref{it:ener}) of \S\ref{sec:intro}, (\ref{eq:3integral}) and the change in sign through $\lambda=-1/2$ of the term $1+2\lambda$ in (\ref{eq:derenergy}), prevent the possibility of blow-up of $\dot E(t)$ towards $-\infty$, which might otherwise have played a role in the study of weak solutions from the point of view of energy dissipation.
\end{remark}

\begin{remark}
Notice that the two-sided, everywhere blow-up found in Theorem \ref{thm:p=1} for $\lambda>1/2$ corresponds, in Theorem \ref{thm:q=1,p>=1}, to $L^p$ blow-up of $u_x$ for any $p>1$. On the other hand, $u_x$ remains integrable for all $\lambda<0$ and $t\in[0,t_*]$ but, as $t\uparrow t_*$, undergoes an $L^{\infty}$ blow-up of the one-sided, discrete type for $\lambda<0$. Then, as the magnitude of $\lambda<0$ decreases, $u_x$ is guaranteed to remain, for $t\in[0,t_*]$, in smaller $L^p$ spaces with $p\in(1,+\infty)$. In the coming sections, we will find that a similar correspondence between the ``strengths'' of the $L^{\infty}$ and $L^p$, $p\in[1,+\infty)$, blow-up in $u_x$, as $t\uparrow t_*$, also holds for other $q>0$. 
\end{remark}

\subsection{Global Estimates and Blow-up for $\lambda\in\mathbb{R}$ and $q>0$}\hfill
\label{subsec:generalcase}

In this section, we study the case of arbitrary $q>0$. As in the previous sections, $L^{p}$ regularity of $u_x$ for $\lambda\in\mathbb{R}$ and $p\in[1,+\infty]$ is examined. In addition, the behaviour of the jacobian (\ref{eq:sum}) is considered. Particularly, we will show that if $q\geq1$, no blow-up occurs in stagnation point-form solutions to the 3D incompressible Euler equations, whereas, for the corresponding 2D case, no spontaneous singularity forms when $q\geq2$. Finally, a class of smooth, periodic initial data larger than the one considered in \cite{Sarria1} is studied. 
Before stating and proving our results, we first establish Lemma \ref{lem:general} and obtain estimates on $\bar{\mathcal{K}}_0(t)$ and $\bar{\mathcal{K}}_1(t)$. 

\vspace{0.02in}
\textbf{Proof of Lemma} \ref{lem:general}(1)
\vspace{0.02in}

For simplicity, we prove statement (1) for functions $u_0'$ that attain their greatest value $M_0>0$ at a single location $\overline\alpha\in(0,1)$. The case of several $\overline\alpha_i\in[0,1]$ follows similarly. From (\ref{eq:expnew0}), there is $0<r\leq1$ such that $\epsilon+M_0-u_0'(\alpha)\sim\epsilon-C_1\left|\alpha-\overline\alpha\right|^q$ for $q\in\mathbb{R}^+$, $\epsilon>0$ and $0\leq\left|\alpha-\overline\alpha\right|\leq r$. Therefore 
\begin{equation*}
\begin{split}
&\int_{\overline\alpha-r}^{\overline\alpha+r}{\frac{d\alpha}{(\epsilon+M_0-u_0'(\alpha))^b}}\sim\int_{\overline\alpha-r}^{\overline\alpha+r}{\frac{d\alpha}{(\epsilon-C_1\left|\alpha-\overline\alpha\right|^q)^b}}
\\
&=\epsilon^{-b}\left[\int_{\overline\alpha-r}^{\overline\alpha}{\left(1+\frac{\left|C_1\right|}{\epsilon}\left(\overline\alpha-\alpha\right)^q\right)^{-b}d\alpha}+\int_{\overline\alpha}^{\overline\alpha+r}{\left(1+\frac{\left|C_1\right|}{\epsilon}\left(\alpha-\overline\alpha\right)^q\right)^{-b}d\alpha}\right]
\end{split}
\end{equation*}
for $b\in\mathbb{R}$. Making the change of variables 
$$\sqrt{\frac{\left|C_1\right|}{\epsilon}}(\overline\alpha-\alpha)^{\frac{q}{2}}=\tan\theta,\,\,\,\,\,\,\,\,\,\,\,\,\,\,\sqrt{\frac{\left|C_1\right|}{\epsilon}}(\alpha-\overline\alpha)^{\frac{q}{2}}=\tan\theta$$
in the first and second integrals inside the bracket, respectively, we find that
\begin{equation}
\label{eq:general2}
\begin{split}
&\int_{\overline\alpha-r}^{\overline\alpha+r}{\frac{d\alpha}{(\epsilon+M_0-u_0'(\alpha))^b}}\sim\frac{4}{q\left|C_1\right|^{\frac{1}{q}}\epsilon^{b-\frac{1}{q}}}\int_0^{\frac{\pi}{2}}{\frac{(\cos\theta)^{^{2b-\frac{2}{q}-1}}}{(\sin\theta)^{^{1-\frac{2}{q}}}}d\theta}
\end{split}
\end{equation}
for small $\epsilon>0$. Suppose $b>\frac{1}{q}$, then setting $\epsilon=\frac{1}{\lambda\eta}-M_0$ in (\ref{eq:general2}) implies
\begin{equation}
\label{eq:general3}
\begin{split}
\int_{0}^{1}{\frac{d\alpha}{\mathcal{J}(\alpha,t)^b}}\sim\frac{C}{\mathcal{J}(\overline\alpha,t)^{^{b-\frac{1}{q}}}}
\end{split}
\end{equation}
for $\eta_*-\eta>0$ small, $\eta_*=\frac{1}{\lambda M_0}$ and 
\begin{equation}
\label{eq:generalcst}
\begin{split}
C=\frac{4}{q}\left(\frac{M_0}{\left|C_1\right|}\right)^{\frac{1}{q}}\int_0^{\frac{\pi}{2}}{\frac{(\cos\theta)^{^{2b-\frac{2}{q}-1}}}{(\sin\theta)^{^{1-\frac{2}{q}}}}d\theta}.
\end{split}
\end{equation}
Now, since the beta function satisfies (see for instance \cite{Gamelin1}): 
\begin{equation}
\label{eq:gammarel}
\begin{split}
B(p,s)=\int_0^1{t^{p-1}(1-t)^{s-1}dt}=\frac{\Gamma(p)\Gamma(s)}{\Gamma(p+s)},\,\,\,\,\,\,\,\,\,\,\,\,\,\Gamma(1+y)=y\Gamma(y)
\end{split}
\end{equation}
for $p,s,y>0$, then, letting $t=\sin^2\theta$, $p=\frac{1}{q}$ and $s=b-\frac{1}{q}$ into (\ref{eq:gammarel})i), and using (\ref{eq:gammarel})ii), one has
\begin{equation}
\label{eq:gammarel2}
\begin{split}
2\int_0^{\frac{\pi}{2}}{\frac{(\cos\theta)^{^{2b-\frac{2}{q}-1}}}{(\sin\theta)^{^{1-\frac{2}{q}}}}d\theta}=\frac{q\,\Gamma\left(1+\frac{1}{q}\right)\Gamma\left(b-\frac{1}{q}\right)}{\Gamma(b)},\,\,\,\,\,\,\,\,\,\,\,\,b>\frac{1}{q}.
\end{split}
\end{equation}
The result follows from (\ref{eq:general3}), (\ref{eq:generalcst}) and (\ref{eq:gammarel2}).

\vspace{0.02in}
\textbf{Proof of Lemma} \ref{lem:general}(2)
\vspace{0.02in}

Follows from an analogous argument using (\ref{eq:expnew00}) and $\eta_*=\frac{1}{\lambda m_0}$ instead. 

\vspace{0.02in}
\textbf{Proof of Lemma} \ref{lem:general}(3)
\vspace{0.02in}

The last claim in (3) follows trivially if $b\leq0$ and $q\in\mathbb{R}^+$ due to the ``almost everywhere'' continuity and boundedness of $u_0'$. To establish the remaining claims, we make use of Lemmas \ref{lem:analcont} and \ref{lem:diff}. However, in order to use the latter, we require that $b\in(0,2)$ and $b\neq1/q$. Since the case $b>1/q$ was established in parts (1) and (2) above, suppose that $b\in(0,1/q)$ and $b\in(0,2)$, or equivalently $q>1/2$ and $b\in(0,1/q)$, or $q\in(0,1/2)$ and $b\in(0,2)$. First, for $q$ and $b$ as prescribed, consider $\lambda>0$ and, for simplicity, assume $M_0$ occurs at a single point $\overline\alpha\in(0,1)$. Then, (\ref{eq:expnew0}) and Lemma \ref{lem:diff} imply that
\begin{equation}
\label{eq:lastgen1}
\begin{split}
\int_{\overline\alpha-r}^{\overline\alpha+r}{\frac{d\alpha}{(\epsilon+M_0-u_0'(\alpha))^b}}&\sim\int_{\overline\alpha-r}^{\overline\alpha+r}{\frac{d\alpha}{(\epsilon-C_1\left|\alpha-\overline\alpha\right|^q)^b}}
\\
&=2r\epsilon^{-b}\,{}_2F_1\left[\frac{1}{q},b,1+\frac{1}{q},\frac{C_1r^q}{\epsilon}\right]
\end{split}
\end{equation}
for $\epsilon\geq\left|C_1\right|\geq\left|C_1\right|r^q>0$ and $0\leq\left|\alpha-\overline\alpha\right|\leq r$. Now, the restriction on $\epsilon$ implies that $-1\leq\frac{C_1r^q}{\epsilon}<0$. However, our ultimate goal is to let $\epsilon$ vanish, so that, eventually, the argument $\frac{C_1r^q}{\epsilon}$ of the series in (\ref{eq:lastgen1})ii) will leave the unit circle, particularly $\frac{C_1r^q}{\epsilon}<-1$. At that point, definition \ref{eq:2f1} for the series no longer holds and we turn to its analytic continuation in Lemma \ref{lem:analcont}. Accordingly, taking $\epsilon>0$ small enough such that $\left|C_1\right|r^q>\epsilon>0$, we apply Lemma \ref{lem:analcont} to (\ref{eq:lastgen1}) and obtain
\begin{equation}
\label{eq:lastgen2}
\begin{split}
\frac{2r}{\epsilon^{b}}\,{}_2F_1\left[\frac{1}{q},b,1+\frac{1}{q},\frac{C_1r^q}{\epsilon}\right]=\frac{2r^{1-qb}}{(1-bq)\left|C_1\right|^b}+\frac{2\Gamma\left(1+\frac{1}{q}\right)\Gamma\left(b-\frac{1}{q}\right)}{\Gamma(b)\left|C_1\right|^{\frac{1}{q}}\epsilon^{b-\frac{1}{q}}}+\psi(\epsilon)
\end{split}
\end{equation}
for $\psi(\epsilon)=o(1)$ as $\epsilon\to0$, and either $q>1/2$ and $b\in(0,1/q)$, or $q\in(0,1/2)$ and $b\in(0,2)$. In addition, due to the assumptions in Lemma \ref{lem:analcont} we require that $\frac{1}{q}$, $b$, $b-\frac{1}{q}\notin\mathbb{Z}$. Finally, since $b-\frac{1}{q}<0$, substituting $\epsilon=\frac{1}{\lambda\eta}-M_0$ into (\ref{eq:lastgen1}) and (\ref{eq:lastgen2}), implies that 
\begin{equation}
\label{eq:lastgen3}
\begin{split}
\int_{0}^{1}{\frac{d\alpha}{\mathcal{J}(\alpha,t)^b}}\sim C
\end{split}
\end{equation}
for $\eta_*-\eta>0$ small, $\eta_*=\frac{1}{\lambda M_0}$, and positive constants $C$ that depend on $\lambda>0$, $b$ and $q$. An analogous argument follows for $\lambda<0$ by using (\ref{eq:expnew00}) instead of (\ref{eq:expnew0}).\hfill$\square$

Using Lemma \ref{lem:general}, we now derive estimates for $\bar{\mathcal{K}}_i(t)$, $i=0,1$, which will be used in subsequent regularity Theorems.

\subsubsection{Estimates for $\bar{\mathcal{K}}_0(t)$ and $\bar{\mathcal{K}}_1(t)$}\hfill
\label{subsubsec:integralestimates}

\textbf{For parameters $\lambda>0.$}
\vspace{0.02in}

For $\lambda>0$, we set $b=\frac{1}{\lambda}$ into Lemma \ref{lem:general}(1)-(3) to obtain 
\begin{equation}
\label{eq:firstint1}
\bar{\mathcal{K}}_0(t)\sim
\begin{cases}
C,\,\,\,\,\,\,\,\,\,\,\,\,\,&\,\lambda>q>\frac{1}{2}\,\,\,\,\,\,\text{or}\,\,\,\,\,q\in(0,1/2),\,\,\,\,\lambda>\frac{1}{2},
\\
C_3\mathcal{J}(\overline\alpha_i,t)^{\frac{1}{q}-\frac{1}{\lambda}},\,\,\,\,\,\,&q>0,\,\,\,\,\lambda\in(0,q)
\end{cases}
\end{equation}
for $\eta_*-\eta>0$ small and positive constants $C_3$ given by
\begin{equation}
\label{eq:cst1}
C_3=\frac{2m\Gamma\left(1+\frac{1}{q}\right)\Gamma\left(\frac{1}{\lambda}-\frac{1}{q}\right)}{\Gamma\left(\frac{1}{\lambda}\right)}\left(\frac{M_0}{\left|C_1\right|}\right)^{\frac{1}{q}}.
\end{equation}
Also, in (\ref{eq:firstint1})i) we assume that $\lambda$ and $q$ satisfy, whenever applicable, 
\begin{equation}
\label{eq:lemmaass1}
\lambda\neq\frac{q}{1-nq},\,\,\,\,\,\,\,\,q\neq\frac{1}{n}\,\,\,\,\,\,\,\forall\,\,\,\,\,\,\,n\in\mathbb{N}.
\end{equation}
We note that corresponding estimates for the missing values may be obtained via a simple continuity argument. 

Similarly, taking $b=1+\frac{1}{\lambda}$ we find 
\begin{equation}
\label{eq:secint1}
\bar{\mathcal{K}}_1(t)\sim
\begin{cases}
C,\,\,&q\in(1/2,1),\,\,\lambda>\frac{q}{1-q}\,\,\,\,\,\,\text{or}\,\,\,\,\,\,q\in(0,1/2),\,\,\lambda>1,
\\
C_4\mathcal{J}(\overline\alpha_i,t)^{\frac{1}{q}-\frac{1}{\lambda}-1},&q\in(0,1),\,\,0<\lambda<\frac{q}{1-q}\,\,\,\,\,\text{or}\,\,\,\,\,q\geq1,\,\,\lambda>0
\end{cases}
\end{equation}
with positive constants $C_4$ determined by
\begin{equation}
\label{eq:const2}
C_4=\frac{2m\Gamma\left(1+\frac{1}{q}\right)\Gamma\left(1+\frac{1}{\lambda}-\frac{1}{q}\right)}{\Gamma\left(1+\frac{1}{\lambda}\right)}\left(\frac{M_0}{\left|C_1\right|}\right)^{\frac{1}{q}}.
\end{equation}
Additionally, for (\ref{eq:secint1})i) we assume that $\lambda$ and $q$ satisfy (\ref{eq:lemmaass1}). 

\vspace{0.02in}
\textbf{For parameters $\lambda<0.$}
\vspace{0.02in}

For $\lambda<0$ and $b=\frac{1}{\lambda}$, Lemma \ref{lem:general}(3) implies that
\begin{equation}
\label{eq:thirdint1}
\bar{\mathcal{K}}_0(t)\sim C
\end{equation}
for $\eta_*-\eta>0$ small. Similarly, parts (2) and (3), now with $b=1+\frac{1}{\lambda}$, yield
\begin{equation}
\label{eq:lastest}
\bar{\mathcal{K}}_1(t)\sim C
\end{equation}
for either
\begin{equation}
\label{eq:condthirdint1}
\begin{cases}
q>0,\,\,\,\,&\lambda\in[-1,0),
\\
q\in(0,1),\,\,\,\,&\lambda<-1\,\,\,\,\,\,\,\,\,\,\,\,\text{satisfying}\,\,\,\,(\ref{eq:lemmaass1}),
\\
q>1,\,\,\,\,&\frac{q}{1-q}<\lambda<-1,
\end{cases}
\end{equation}
whereas
\begin{equation}
\label{eq:lastest1}
\bar{\mathcal{K}}_1(t)\sim C_5\mathcal{J}(\underline\alpha_j,t)^{\frac{1}{q}-\frac{1}{\lambda}-1}
\end{equation}
for $q>1$,\, $\lambda<\frac{q}{1-q}$ and positive constants $C_5$ determined by
\begin{equation}
\label{eq:const3}
C_5=\frac{2n\Gamma\left(1+\frac{1}{q}\right)\Gamma\left(1+\frac{1}{\lambda}-\frac{1}{q}\right)}{\Gamma\left(1+\frac{1}{\lambda}\right)}\left(\frac{\left|m_0\right|}{C_2}\right)^{\frac{1}{q}}.
\end{equation}

\subsubsection{$L^{\infty}$ Regularity for $\lambda\in\mathbb{R}^+\cup\{0\}, q\in\mathbb{R}^+$}\hfill
\label{subsubsec:generalcaselambdapospinf}

In this section, we use the estimates in \S\ref{subsubsec:integralestimates} to examine the $L^{\infty}$ regularity of $u_x$ for $\lambda\in\mathbb{R}^+\cup\{0\}$ and $u_0'$ satisfying (\ref{eq:expnew0}) for some $q\in\mathbb{R}^+$. Furthermore, the behaviour of the jacobian (\ref{eq:sum}) is also studied.

\begin{theorem}
\label{thm:lambdapos}
Consider the initial boundary value problem (\ref{eq:nonhomo})-(\ref{eq:pbc}) for $u_0'(\alpha)$ satisfying (\ref{eq:expnew0}). 
\begin{enumerate}

\item\label{it:global} If $q\in\mathbb{R}^+$ and $\lambda\in[0,q/2],$ solutions exist globally in time. More particularly, these vanish as $t\uparrow t_*=+\infty$ for $\lambda\in(0,q/2)$ but converge to a nontrivial steady state if $\lambda=q/2$.

\item\label{it:blow1} If $q\in\mathbb{R}^+$ and $\lambda\in(q/2,q)$, there exists a finite $t_*>0$ such that both the maximum $M(t)$ and the minimum $m(t)$ diverge to $+\infty$ and respectively to $-\infty$ as $t\uparrow t_*$. Moreover, $\lim_{t\uparrow t_*}u_x(\gamma(\alpha,t),t)=-\infty$ for $\alpha\notin\bigcup_{i,j}\{\overline\alpha_i\}\cup\{\underline\alpha_j\}$ (two-sided, everywhere blow-up).

\item\label{it:blow2} For $q\in(0,1/2)$ and $\lambda>1$ such that $q\neq\frac{1}{n}$ and $\lambda\neq\frac{q}{1-nq}$ for all $n\in\mathbb{N},$ there is a finite $t_*>0$ such that only the maximum blows up, $M(t)\to+\infty,$ as $t\uparrow t_*$ (one-sided, discrete blow-up). Further, if $\frac{1}{2}<\lambda<\frac{q}{1-q}$ for $q\in(1/3,1/2)$, a two-sided, everywhere blow-up (as described in (\ref{it:blow1}) above) occurs at a finite $t_*>0$.

\item\label{it:blow3} Suppose $q\in(1/2,1)$. Then for $q<\lambda<\frac{q}{1-q}$, there exists a finite $t_*>0$ such that, as $t\uparrow t_*$, two-sided, everywhere blow-up develops. If instead $\lambda>\frac{q}{1-q}$, only the maximum diverges, $M(t)\to+\infty$, as $t\uparrow t_*<+\infty$. 

\item\label{it:blow4} For $\lambda>q>1$, there is a finite $t_*>0$ such that $u_x$ undergoes a two-sided, everywhere blow-up as $t\uparrow t_*$. 
\end{enumerate}
\begin{proof}
Suppose $\lambda,q>0$, let $C$ denote a positive constant which may depend on $\lambda$ and $q$, and set $\eta_*=\frac{1}{\lambda M_0}.$

\vspace{0.02in}
\textbf{Proof of Statements} (\ref{it:global}) \textbf{and} (\ref{it:blow1})
\vspace{0.02in}

Suppose $\lambda\in(0,q)$ for some $q>0$. Then, for $\eta_*-\eta>0$ small $\bar{\mathcal{K}}_0(t)$ satisfies (\ref{eq:firstint1})ii) while $\bar{\mathcal{K}}_1(t)$ obeys (\ref{eq:secint1})ii). Consequently, (\ref{eq:mainsolu}) implies that 
\begin{equation}
\label{eq:ok3} 
u_x(\gamma(\alpha,t),t)\sim \frac{M_0}{C_3^{^{2\lambda}}}\left(\frac{\mathcal{J}(\overline\alpha_i,t)}{\mathcal{J}(\alpha,t)}-\frac{C_4}{C_3}\right)\mathcal{J}(\overline\alpha_i,t)^{1-\frac{2\lambda}{q}}
\end{equation}
for positive constants $C_3$ and $C_4$ given by (\ref{eq:cst1}) and (\ref{eq:const2}). But for $y_1=\frac{1}{\lambda}-\frac{1}{q}$ and $y_2=\frac{1}{\lambda}$, (\ref{eq:gammarel})ii), (\ref{eq:cst1}) and (\ref{eq:const2}) yield
 \begin{equation}
\label{eq:gammaid}
\frac{C_4}{C_3}=\frac{\Gamma(y_1+1)\,\Gamma(y_2)}{\Gamma(y_1)\,\Gamma(y_2+1)}=\frac{y_1}{y_2}=1-\frac{\lambda}{q}\in(0,1),\,\,\,\,\,\,\,\,\,\,\,\,\,\,\,\lambda\in(0,q).
\end{equation}
As a result, setting $\alpha=\overline\alpha_i$ in (\ref{eq:ok3}) and using (\ref{eq:maxmin})i) implies that
\begin{equation}
\label{eq:ux5}
M(t)\sim\frac{M_0}{C_3^{^{2\lambda}}}\left(\frac{\lambda}{q}\right)\mathcal{J}(\overline\alpha_i,t)^{1-\frac{2\lambda}{q}}
\end{equation}
for $\eta_*-\eta>0$ small, whereas, if $\alpha\neq\overline\alpha_i$,
\begin{equation}
\label{eq:ux6}
u_x(\gamma(\alpha,t),t)\sim-\left(1-\frac{\lambda}{q}\right)\frac{M_0}{C_3^{^{2\lambda}}}\mathcal{J}(\overline\alpha_i,t)^{1-\frac{2\lambda}{q}}.
\end{equation}
Clearly, when $\lambda=q/2$, 
$$M(t)\to\frac{M_0}{2C_3^{\,q}}>0$$
as $\eta\uparrow\eta_*$, while, for $\alpha\neq\overline\alpha_i$,
$$u_x(\gamma(\alpha,t),t)\to-\frac{M_0}{2C_3^{\,q}}<0.$$
If $\lambda\in(0,q/2)$, (\ref{eq:ux5}) now implies that
$$M(t)\to0^+$$
as $\eta\uparrow\eta_*$, whereas, using (\ref{eq:ux6}) for $\alpha\neq\overline\alpha_i$, 
$$u_x(\gamma(\alpha,t),t)\to0^-.$$
In contrast, if $\lambda\in(q/2,q)$, $1-\frac{2\lambda}{q}<0$. Then (\ref{eq:ux5}) and (\ref{eq:ux6}) yield
\begin{equation}
\label{eq:ux52}
M(t)\to+\infty
\end{equation}
as $\eta\uparrow\eta_*$, but 
\begin{equation}
\label{eq:ux62}
u_x(\gamma(\alpha,t),t)\to-\infty
\end{equation}
for $\alpha\neq\overline\alpha_i$. Lastly, rewriting (\ref{eq:etaivp}) as
\begin{equation}
\label{eq:time}
dt=\bar{\mathcal{K}}_0(t)^{2\lambda}d\eta
\end{equation}
and using (\ref{eq:firstint1})ii), we obtain
\begin{equation}
\label{eq:ux7}
t_*-t\sim C\int_{\eta}^{\eta_*}{(1-\lambda\mu M_0)^{\frac{2\lambda}{q}-2}d\mu}
\end{equation}
or equivalently
\begin{equation}
\label{eq:timegen}
t_*-t\sim 
\begin{cases}
\frac{C}{2\lambda-q}\left(C(\eta_*-\eta)^{\frac{2\lambda}{q}-1}-\lim_{\mu\uparrow\eta_*}(\eta_*-\mu)^{\frac{2\lambda}{q}-1}\right),\,\,\,\,&\lambda\in(0,q)\backslash\{q/2\},
\\
C\left(\log(\eta_*-\eta)-\lim_{\mu\uparrow\eta_*}\log(\eta_*-\mu)\right),\,\,\,&\lambda=q/2.
\end{cases}
\end{equation}
Consequently, $t_*=+\infty$ for $\lambda\in(0,q/2]$, while $0<t_*<+\infty$ if $\lambda\in(q/2,q)$. Lastly, the case $\lambda=0$ follows from the results in \cite{Sarria1}. 

\vspace{0.02in}
\textbf{Proof of Statement} (\ref{it:blow2})
\vspace{0.02in}

First, suppose $q\in(0,1/2)$ and $\lambda>1$ satisfy (\ref{eq:lemmaass1}). Then $\bar{\mathcal{K}}_0(t)$ and $\bar{\mathcal{K}}_1(t)$ satisfy (\ref{eq:firstint1})i)  and (\ref{eq:secint1})i), respectively. Therefore, (\ref{eq:mainsolu}) implies that
\begin{equation}
\label{eq:blow3case}
u_x(\gamma(\alpha,t),t)\sim C\left(\frac{1}{\mathcal{J}(\alpha,t)}-C\right)
\end{equation}
for $\eta_*-\eta>0$ small. Setting $\alpha=\overline\alpha_i$ into (\ref{eq:blow3case}) and using (\ref{eq:maxmin})i) gives
$$M(t)\sim\frac{C}{\mathcal{J}(\overline\alpha_i,t)}\to+\infty$$
as $\eta\uparrow\eta_*$, while, if $\alpha\neq\overline\alpha_i$, $u_x(\gamma(\alpha,t),t)$ remains finite for all $\eta\in[0,\eta_*]$ due to the definition of $M_0$. The existence of a finite blow-up time $t_*>0$ for the maximum is guaranteed by (\ref{eq:firstint1})i) and (\ref{eq:time}), which lead to
\begin{equation}
\label{eq:timeblow3case}
t_*-t\sim C(\eta_*-\eta).
\end{equation}
Next, suppose $\frac{1}{2}<\lambda<\frac{q}{1-q}$ for $q\in(1/3,1/2)$, so that $\frac{q}{1-q}\in(1/2,1)$. Then, using (\ref{eq:firstint1})i) and (\ref{eq:secint1})ii) in (\ref{eq:mainsolu}), we find that
\begin{equation}
\label{eq:blow4case}
u_x(\gamma(\alpha,t),t)\sim C\left(\frac{C}{\mathcal{J}(\alpha,t)}-\mathcal{J}(\overline\alpha_i,t)^{^{\frac{1}{q}-\frac{1}{\lambda}-1}}\right)
\end{equation}
for $\eta_*-\eta>0$ small. Set $\alpha=\overline\alpha_i$ into the above and use $\lambda>q$ to obtain
\begin{equation}
\label{eq:blow4case1}
M(t)\sim\frac{C}{\mathcal{J}(\overline\alpha_i,t)}\to+\infty
\end{equation}
as $\eta\uparrow\eta_*$. On the other hand, for $\alpha\neq\overline\alpha_i$, the space-dependent in (\ref{eq:blow4case}) now remains finite for all $\eta\in[0,\eta_*]$. As a result, the second term dominates and
\begin{equation}
\label{eq:blow4case2}
u_x(\gamma(\alpha,t),t)\sim-C\mathcal{J}(\overline\alpha_i,t)^{^{\frac{1}{q}-\frac{1}{\lambda}-1}}\to-\infty
\end{equation}
as $\eta\uparrow\eta_*$. The existence of a finite blow-up time $t_*>0$, follows, as in the previous case, from (\ref{eq:time}) and (\ref{eq:firstint1})i). 

\vspace{0.02in}
\textbf{Proof of Statement} (\ref{it:blow3})
\vspace{0.02in}

Part (\ref{it:blow3}) follows from an argument analogous to the one above. Briefly, if $q<\lambda<\frac{q}{1-q}$ for $q\in(1/2,1)$, we use estimates (\ref{eq:firstint1})i) and (\ref{eq:secint1})ii) on (\ref{eq:mainsolu}) to get (\ref{eq:blow4case}), with different positive constants $C$. Two-sided, everywhere blow-up in finite-time then follows just as above. If instead $\lambda>\frac{q}{1-q}$ for $q\in(1/2,1)$, then (\ref{eq:firstint1})i) still holds but $\bar{\mathcal{K}}_1(t)$ now remains bounded for all $\eta\in[0,\eta_*]$; it satisfies (\ref{eq:secint1})i). Therefore, up to different positive constants $C$, (\ref{eq:mainsolu}) leads to (\ref{eq:blow3case}), and so only the maximum diverges, $M(t)\to+\infty$, as $t$ approaches some finite $t_*>0$ whose existence is guaranteed by (\ref{eq:timeblow3case}).

\vspace{0.02in}
\textbf{Proof of Statement} (\ref{it:blow4})
\vspace{0.02in}

For $\lambda>q>1$, (\ref{eq:firstint1})i), (\ref{eq:secint1})ii) and (\ref{eq:mainsolu}) imply (\ref{eq:blow4case}). Then, we follow the argument used to establish the second part of (\ref{it:blow2}) to show that two-sided, everywhere finite-time blow-up occurs. See \S \ref{sec:examples} for examples.\end{proof}
\end{theorem}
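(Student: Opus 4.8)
The plan is to use the representation formula (\ref{eq:mainsolu}) together with the integral estimates for $\bar{\mathcal{K}}_0(t)$ and $\bar{\mathcal{K}}_1(t)$ established in \S\ref{subsubsec:integralestimates}, specializing to the parameter regime $\lambda>q>1$. First I would observe that since $\lambda>q$ we have $\lambda>q>\tfrac12$, so the hypotheses of (\ref{eq:firstint1})i) are met (with the caveat that the excluded values in (\ref{eq:lemmaass1}) can be recovered by continuity), giving $\bar{\mathcal{K}}_0(t)\sim C$ as $\eta\uparrow\eta_*=\tfrac{1}{\lambda M_0}$. At the same time, since $q\geq1$, the second branch of (\ref{eq:secint1}) applies, so $\bar{\mathcal{K}}_1(t)\sim C_4\,\mathcal{J}(\overline\alpha_i,t)^{\frac{1}{q}-\frac{1}{\lambda}-1}$, and because $\lambda>q$ the exponent $\frac{1}{q}-\frac{1}{\lambda}-1$ is negative, so $\bar{\mathcal{K}}_1(t)\to+\infty$.

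Next I would substitute these two asymptotics into (\ref{eq:mainsolu}). The prefactor $\bigl(\lambda\eta(t)\bar{\mathcal{K}}_0(t)^{2\lambda}\bigr)^{-1}$ tends to a positive constant, $\mathcal{J}(\alpha,t)^{-1}$ stays bounded for $\alpha\neq\overline\alpha_i$, and the term $\bar{\mathcal{K}}_1(t)/\bar{\mathcal{K}}_0(t)$ blows up like $\mathcal{J}(\overline\alpha_i,t)^{\frac{1}{q}-\frac{1}{\lambda}-1}$. This is exactly the structure of (\ref{eq:blow4case}), so the proof reduces to the bookkeeping already carried out in the second half of the proof of statement (\ref{it:blow2}): evaluating at $\alpha=\overline\alpha_i$ gives $M(t)\sim C/\mathcal{J}(\overline\alpha_i,t)\to+\infty$ since the $\mathcal{J}(\overline\alpha_i,t)^{-1}$ term now dominates (because $-1<\frac{1}{q}-\frac{1}{\lambda}-1$), while for $\alpha\neq\overline\alpha_i$ the $\bar{\mathcal{K}}_1$ term dominates and $u_x(\gamma(\alpha,t),t)\sim -C\,\mathcal{J}(\overline\alpha_i,t)^{\frac{1}{q}-\frac{1}{\lambda}-1}\to-\infty$. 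Since this divergence happens at every $\alpha\notin\bigcup_i\{\overline\alpha_i\}$, and in fact at the $\overline\alpha_i$ too, the blow-up is two-sided and everywhere.

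Finally I would establish the finiteness of $t_*$: from (\ref{eq:time}), $dt=\bar{\mathcal{K}}_0(t)^{2\lambda}d\eta\sim C\,d\eta$ because $\bar{\mathcal{K}}_0(t)\to C$, so integrating yields $t_*-t\sim C(\eta_*-\eta)$ and hence $0<t_*<+\infty$. The main obstacle, such as it is, is not analytical but organizational: one must verify carefully that the regime $\lambda>q>1$ really does land in branch (i) of (\ref{eq:firstint1}) and branch (ii) of (\ref{eq:secint1}) simultaneously — in particular checking the sign of $\frac{1}{q}-\frac{1}{\lambda}-1$ and confirming that this exponent exceeds $-1$, which is what makes the space-dependent singularity at $\overline\alpha_i$ outrun the $\bar{\mathcal{K}}_1$ singularity when $\alpha=\overline\alpha_i$ but lose to it when $\alpha\neq\overline\alpha_i$. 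Once that is pinned down, the argument is a direct transcription of the $\tfrac12<\lambda<\tfrac{q}{1-q}$ case already treated, which is why the authors simply refer back to it.
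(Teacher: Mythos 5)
Your treatment of the regime $\lambda>q>1$ is correct and follows exactly the paper's route for statement (\ref{it:blow4}): $\bar{\mathcal{K}}_0(t)\sim C$ from (\ref{eq:firstint1})i), $\bar{\mathcal{K}}_1(t)\sim C_4\,\mathcal{J}(\overline\alpha_i,t)^{\frac{1}{q}-\frac{1}{\lambda}-1}$ from (\ref{eq:secint1})ii), the sign check $-1<\frac{1}{q}-\frac{1}{\lambda}-1<0$ that makes the space-dependent term win at $\alpha=\overline\alpha_i$ and lose elsewhere, and $t_*-t\sim C(\eta_*-\eta)$ from (\ref{eq:time}). That is precisely how the paper argues, so on that piece there is nothing to object to.

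The genuine gap is one of coverage: the statement to be proved is the entire theorem, with five enumerated parts, and you have proved only part (\ref{it:blow4}). Parts (\ref{it:global})--(\ref{it:blow3}) are never addressed; worse, your argument leans on ``the bookkeeping already carried out in the second half of the proof of statement (\ref{it:blow2})'', which is itself one of the claims you were asked to establish. The omission is not cosmetic. The regime $\lambda\in(0,q)$ of parts (\ref{it:global}) and (\ref{it:blow1}) is the one place where \emph{both} $\bar{\mathcal{K}}_0$ and $\bar{\mathcal{K}}_1$ diverge, so the conclusion hinges on the exact cancellation encoded in the ratio $C_4/C_3$; the paper computes this via the Gamma-function identity $\Gamma(1+y)=y\,\Gamma(y)$ to get $C_4/C_3=1-\lambda/q$, which is what produces the coefficients $\lambda/q$ and $-(1-\lambda/q)$ in (\ref{eq:ux5})--(\ref{eq:ux6}), hence the threshold at $\lambda=q/2$ separating decay, convergence to a nontrivial steady state, and two-sided blow-up, as well as the integrability computation (\ref{eq:timegen}) distinguishing $t_*=+\infty$ from $t_*<+\infty$. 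None of that structure appears in, or follows from, the $\lambda>q>1$ analysis, where $\bar{\mathcal{K}}_0$ stays bounded. Likewise the case distinctions in parts (\ref{it:blow2}) and (\ref{it:blow3}) (one-sided versus two-sided blow-up according to whether $\bar{\mathcal{K}}_1$ satisfies (\ref{eq:secint1})i) or (\ref{eq:secint1})ii)) require separate verification of which branch of the estimates applies for each $(q,\lambda)$ window. As submitted, the proposal establishes one fifth of the theorem and presupposes the rest.
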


\begin{remark}
Theorems \ref{thm:p=1} and \ref{thm:lambdapos} allow us to predict the regularity of stagnation point-form (SPF) solutions to the two ($\lambda=1$) and three ($\lambda=1/2$) dimensional incompressible Euler equations assuming we know something about the curvature of the initial data $u_0$ near $\overline\alpha_i$. Setting $\lambda=1$ into Theorem \ref{thm:lambdapos}(\ref{it:global}) implies that SPF solutions in the 2D setting persist for all time if $u_0'$ satisfies (\ref{eq:expnew0}) for arbitrary $q\geq2$. On the contrary, Theorems \ref{thm:p=1} and \ref{thm:lambdapos}(\ref{it:blow1})-(\ref{it:blow3}), tell us that if $q\in(1/2,2)$, two-sided, everywhere finite-time blow-up occurs. Analogously, solutions to the corresponding 3D problem exist globally in time for $q\geq1$, whereas, two-sided, everywhere blow-up develops when $q\in(1/2,1)$. See Table \ref{table:thecase} below. Finally, we remark that finite-time blow-up in $u_x$ is expected for both the two and three dimensional equations if $q\in(0,1/2]$. See for instance \S\ref{sec:examples} for a blow-up example in the 3D case with $q=1/3$.
\end{remark}

\begin{table}[ht]
\caption{Regularity of SPF solutions to Euler equations}
\centering
		\renewcommand{\arraystretch}{1.2}
    \begin{tabular}{ | m{2.0cm} | m{3.4cm} | m{3.4cm} | }
    \hline
    \centering{$q$} & \centering{2D Euler}  & $\text{\,\,\,\,\,\,\,\,\,\,\,\,\,\,3D Euler}$ \\ \hline
    \centering{$(1/2,1)$} &$\text{Finite time blow up}$ & $\text{Finite time blow up}$  \\ \hline
    \centering{$[1,2)$} &$\text{Finite time blow up}$ & $\text{\,\,\,\,\,\,Global in time}$     \\ \hline
    \centering{$[2,+\infty)$} &$\text{\,\,\,\,\,\,\,\,Global in time}$ &$\text{\,\,\,\,\,\,Global in time}$ \\ \hline
    \end{tabular}
\label{table:thecase}
\end{table}
Corollary \ref{coro:lambda>1/2} below briefly examines the behaviour, as $t\uparrow t_*$, of the jacobian (\ref{eq:sum}) for $t_*>0$ as in Theorem \ref{thm:lambdapos}.

\begin{corollary}
\label{coro:lambda>1/2}
Consider the initial boundary value problem (\ref{eq:nonhomo})-(\ref{eq:pbc}) with $u_0'(\alpha)$ satisfying (\ref{eq:expnew0}) for $q\in\mathbb{R}^+$, and let $t_*>0$ be as in Theorem \ref{thm:lambdapos}. It follows,

\begin{enumerate}

\item\label{it:item1} For $q>0$ and $\lambda\in(0,q)$,
\begin{equation}
\label{eq:jacone}
\lim_{t\uparrow t_*}\gamma_{\alpha}(\alpha,t)=
\begin{cases}
+\infty,\,\,\,\,\,\,\,\,\,\,\,\,\,\,\,&\alpha=\overline\alpha_i,
\\
0,\,\,\,\,\,\,\,\,&\alpha\neq\overline\alpha_i
\end{cases}
\end{equation}
where $t_*=+\infty$ for $\lambda\in(0,q/2]$, while $0<t_*<+\infty$ if $\lambda\in(q/2,q)$.

\item\label{it:item1.5} Suppose $\lambda>q>1/2$, or $q\in(0,1/2)$ and $\lambda>1/2$, satisfy (\ref{eq:lemmaass1}). Then, there exists a finite $t_*>0$ such that
\begin{equation}
\label{eq:jacone22}
\lim_{t\uparrow t_*}\gamma_{\alpha}(\alpha,t)=
\begin{cases}
+\infty,\,\,\,\,\,\,\,\,\,\,&\alpha=\overline\alpha_i,
\\
C(\alpha),\,\,\,\,\,\,\,\,&\alpha\neq\overline\alpha_i
\end{cases}
\end{equation}
where $C(\alpha)\in\mathbb{R}^+$ depends on the choice of $\lambda$, $q$ and $\alpha\neq\overline\alpha_i$.
\end{enumerate}
\end{corollary}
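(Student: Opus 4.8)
The plan is to exploit the representation $\gamma_\alpha = \mathcal{K}_0/\bar{\mathcal{K}}_0 = \mathcal{J}(\alpha,t)^{-1/\lambda}/\bar{\mathcal{K}}_0(t)$ from (\ref{eq:sum}) and (\ref{eq:def}), and simply read off the limit $\eta\uparrow\eta_*$ by combining the pointwise behaviour of the numerator $\mathcal{J}(\alpha,t)^{-1/\lambda}$ with the estimates on $\bar{\mathcal{K}}_0(t)$ already recorded in \S\ref{subsubsec:integralestimates}. First I would record the two elementary facts: for $\lambda>0$ and $\eta_*=1/(\lambda M_0)$, $\mathcal{J}(\overline\alpha_i,t)=1-\lambda\eta M_0\to 0^+$, so $\mathcal{J}(\overline\alpha_i,t)^{-1/\lambda}\to+\infty$, whereas for any $\alpha\notin\{\overline\alpha_i\}$ one has $u_0'(\alpha)<M_0$ (strictly, by the definition of $M_0$ and the local form (\ref{eq:expnew0})), hence $\mathcal{J}(\alpha,t)\to 1-u_0'(\alpha)/M_0>0$ and $\mathcal{J}(\alpha,t)^{-1/\lambda}$ converges to the finite positive constant $(1-u_0'(\alpha)/M_0)^{-1/\lambda}$.

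For part (\ref{it:item1}), with $q>0$ and $\lambda\in(0,q)$, estimate (\ref{eq:firstint1})ii) gives $\bar{\mathcal{K}}_0(t)\sim C_3\,\mathcal{J}(\overline\alpha_i,t)^{\frac1q-\frac1\lambda}$. Dividing, at $\alpha=\overline\alpha_i$ we get $\gamma_\alpha(\overline\alpha_i,t)\sim C_3^{-1}\mathcal{J}(\overline\alpha_i,t)^{-1/\lambda}\big/\mathcal{J}(\overline\alpha_i,t)^{\frac1q-\frac1\lambda}=C_3^{-1}\mathcal{J}(\overline\alpha_i,t)^{-1/q}\to+\infty$ since $q>0$; and for $\alpha\neq\overline\alpha_i$, $\gamma_\alpha(\alpha,t)\sim C_3^{-1}(1-u_0'(\alpha)/M_0)^{-1/\lambda}\,\mathcal{J}(\overline\alpha_i,t)^{\frac1\lambda-\frac1q}\to 0$ because $\frac1\lambda-\frac1q>0$. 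The statement about $t_*$ being finite or infinite according to $\lambda\gtrless q/2$ is exactly (\ref{eq:timegen}) and the line following it, already proved in Theorem \ref{thm:lambdapos}(\ref{it:global})--(\ref{it:blow1}), so I would just cite it.

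For part (\ref{it:item1.5}), the hypotheses ($\lambda>q>1/2$, or $q\in(0,1/2)$ and $\lambda>1/2$, satisfying (\ref{eq:lemmaass1})) are precisely those under which (\ref{eq:firstint1})i) applies, giving $\bar{\mathcal{K}}_0(t)\sim C$ a finite positive constant as $\eta\uparrow\eta_*$. Then $\gamma_\alpha(\overline\alpha_i,t)\sim C^{-1}\mathcal{J}(\overline\alpha_i,t)^{-1/\lambda}\to+\infty$, while for $\alpha\neq\overline\alpha_i$, $\gamma_\alpha(\alpha,t)\to C^{-1}(1-u_0'(\alpha)/M_0)^{-1/\lambda}=:C(\alpha)\in\mathbb{R}^+$, which manifestly depends on $\lambda$, $q$ (through the constant $C=\lim\bar{\mathcal{K}}_0$, which involves $q$) and on $\alpha$. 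Finiteness of $t_*$ in this regime is (\ref{eq:timeblow3case}) from the proof of Theorem \ref{thm:lambdapos}(\ref{it:blow2})--(\ref{it:blow4}), which I would again cite rather than reprove.

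I do not expect a genuine obstacle here; the only point requiring mild care is the near-$\overline\alpha_i$ local analysis needed to justify that $\bar{\mathcal{K}}_0(t)^{2\lambda}$ appearing via $\dot\eta$ does not interfere — but this is subsumed in the already-established relation between $t$ and $\eta$ and in Lemma \ref{lem:general}. One should also note the excluded measure-zero parameter values in (\ref{eq:lemmaass1}) are handled, as remarked after (\ref{eq:lemmaass1}), by a continuity argument in $(\lambda,q)$; alternatively, since the claimed limits of $\gamma_\alpha$ are the same across the excluded values, one can invoke (\ref{eq:firstint1}) directly together with that continuity remark. The proof is therefore essentially a bookkeeping exercise: pair each parameter regime with the matching $\bar{\mathcal{K}}_0$ estimate and divide.
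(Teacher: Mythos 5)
Your proposal is correct and follows essentially the same route as the paper: the authors likewise read off $\gamma_\alpha=\mathcal{J}(\alpha,t)^{-1/\lambda}/\bar{\mathcal{K}}_0(t)$ from (\ref{eq:sum}), pair it with estimates (\ref{eq:firstint1})ii) and (\ref{eq:firstint1})i) in the respective parameter regimes, and cite Theorem \ref{thm:lambdapos} for the finite or infinite character of $t_*$. Your exponent bookkeeping ($-1/q$ at $\overline\alpha_i$ and $\frac{1}{\lambda}-\frac{1}{q}>0$ elsewhere in part (\ref{it:item1})) matches what the paper leaves implicit.
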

\begin{proof}
The limits (\ref{eq:jacone}) and (\ref{eq:jacone22}) follow straightforwardly from (\ref{eq:sum}) and estimates (\ref{eq:firstint1})ii) and (\ref{eq:firstint1})i), respectively; whereas, the finite or infinite character of $t_*>0$ is a consequence of Theorem \ref{thm:lambdapos}.\end{proof}






\subsubsection{Further $L^p$ Regularity for $\lambda\in[0,+\infty), q\in\mathbb{R}^+$ and $p\in[1,+\infty)$}\hfill
\label{subsubsec:generalcaselambdapospfin}

From Theorem \ref{thm:lambdapos}, if $\lambda\in[0,q/2]$ for $q\in\mathbb{R}^+$, solutions remain in $L^{\infty}$ for all time; otherwise, $\left\|u_x\right\|_{\infty}$ diverges as $t$ approaches some finite $t_*>0$. In this section, we study further properties of $L^p$ regularity in $u_x$, as $t\uparrow t_*$, for $\lambda>q/2$, $p\in[1,+\infty)$ and initial data $u_0'(\alpha)$ satisfying (\ref{eq:expnew0}). To do so, we will use the upper and lower bounds (\ref{eq:upper}) and (\ref{eq:lower}). Consequently, for $\eta_*-\eta>0$ small and $\eta_*=\frac{1}{\lambda M_0}$, estimates on the behaviour of the time-dependent integrals
\begin{equation}
\label{eq:remaining}
\int_0^1{\frac{d\alpha}{\mathcal{J}(\alpha,t)^{\frac{1}{\lambda p}}}},\,\,\,\,\,\,\,\,\,\,\,\,\,\int_0^1{\frac{d\alpha}{\mathcal{J}(\alpha,t)^{1+\frac{1}{\lambda p}}}},\,\,\,\,\,\,\,\,\,\,\,\,\,\,\,\int_0^1{\frac{d\alpha}{\mathcal{J}(\alpha,t)^{p+\frac{1}{\lambda}}}}
\end{equation}
are required. Since these may be obtained directly from Lemma \ref{lem:general}(1)-(3), we omit the details and state our findings below. For $p\in[1,+\infty)$,
\begin{equation}
\label{eq:lpos3}
\int_0^1{\frac{d\alpha}{\mathcal{J}(\alpha,t)^{\frac{1}{\lambda p}}}}\sim
\begin{cases}
C,\,\,&q\in(0,1/2),\, \lambda>\frac{1}{2p}\,\,\,\,\text{or}\,\,\,q>\frac{1}{2},\,\,\, \lambda>\frac{q}{p},
\\
C_6\mathcal{J}(\overline\alpha,t)^{\frac{1}{q}-\frac{1}{\lambda p}}, &q>0,\,\,\, \lambda\in(0,q/p)
\end{cases} 
\end{equation}
with positive constants
\begin{equation}
\label{eq:c6}
C_6=\frac{2\Gamma\left(1+\frac{1}{q}\right)\Gamma\left(\frac{1}{\lambda p}-\frac{1}{q}\right)}{\Gamma\left(\frac{1}{\lambda p}\right)}\left(\frac{M_0}{\left|C_1\right|}\right)^{\frac{1}{q}}.
\end{equation}
Also
\begin{equation}
\label{eq:lpos4}
\int_0^1{\frac{d\alpha}{\mathcal{J}(\alpha,t)^{1+\frac{1}{\lambda p}}}}\sim C
\end{equation}
for either 
\begin{equation}
\label{eq:lpos5}
\begin{cases}
&q\in(0,1/2),\,\,\,\,\,\, \lambda>\frac{1}{p},
\\
&q\in(1/2,1),\,\,\,\,\,\, \lambda>\frac{q}{p(1-q)},
\end{cases}
\end{equation}
whereas
\begin{equation}
\label{eq:lpos6}
\int_0^1{\frac{d\alpha}{\mathcal{J}(\alpha,t)^{1+\frac{1}{\lambda p}}}}\sim C_7\mathcal{J}(\overline\alpha,t)^{\frac{1}{q}-\frac{1}{\lambda p}-1}
\end{equation}
for   
\begin{equation}
\label{eq:lpos7}
\begin{cases}
&q\in(0,1),\,\,\,\,\, 0<\lambda<\frac{q}{p(1-q)},
\\
&q\geq1,\,\,\,\,\, \lambda>0.
\end{cases}
\end{equation}
The positive constants $C_7$ in (\ref{eq:lpos6}) are obtained by replacing every $\frac{1}{\lambda p}$ term in (\ref{eq:c6}) by $1+\frac{1}{\lambda p}$. Also, due to Lemma \ref{lem:general}, (\ref{eq:lpos3})i) and (\ref{eq:lpos4}) are valid for
\begin{equation}
\label{eq:req1lambdapos1}
\lambda\neq\frac{q}{p(1-nq)},\,\,\,\,\,\,\,\,\,q\neq\frac{1}{n}\,\,\,\forall\,\,\,\,n\in\mathbb{N}\cup\{0\},
\end{equation}
where a simple continuity argument may again be used (see (\ref{eq:lemmaass1})) to obtain estimates for the missing values. Finally 
\begin{equation}
\label{eq:lpos8}
\int_0^1{\frac{d\alpha}{\mathcal{J}(\alpha,t)^{p+\frac{1}{\lambda}}}}\sim C
\end{equation}
for either
\begin{equation}
\label{eq:lpos9}
\begin{cases}
&q\in(0,1/2),\,\,\,\,p\in[1,2),\,\,\,\,\,\,\,\,\,\, \lambda>\frac{1}{2-p},
\\
&q\in(1/2,1),\,\,\,\, p\in[1,1/q),\,\,\,\, \lambda>\frac{q}{1-pq},
\end{cases}
\end{equation}
while
\begin{equation}
\label{eq:lpos10}
\int_0^1{\frac{d\alpha}{\mathcal{J}(\alpha,t)^{p+\frac{1}{\lambda}}}}\sim C\mathcal{J}(\overline\alpha,t)^{\frac{1}{q}-\frac{1}{\lambda}-p}
\end{equation}
if  
\begin{equation}
\label{eq:lpos11}
\begin{cases}
&q\in(0,1],\,\,\,\,\,\, p\in[1,1/q),\,\,\,\, 0<\lambda<\frac{q}{1-pq},
\\
&q\in(0,1],\,\,\,\,\,\, p\geq\frac{1}{q},\,\,\,\,\,\, \lambda>0,
\\
&q>1,\,\,\,\,\,\,\,\,\,\,\,\,\,\, p\geq1,\,\,\,\,\,\,\, \lambda>0.
\end{cases}
\end{equation}
Estimate (\ref{eq:lpos8}) is in turn valid for
\begin{equation}
\label{eq:req1lambdapos2}
\lambda\neq\frac{q}{1+q(n-p)},\,\,\,\,\,\,\,\,\,q\neq\frac{1}{n}\,\,\,\,\forall\,\,\,n\in\mathbb{N}.
\end{equation}
In what follows, $t_*>0$ will denote the $L^{\infty}$ blow-up time for $u_x$ in Theorem \ref{thm:lambdapos}. Also, we will assume that (\ref{eq:lemmaass1}), (\ref{eq:req1lambdapos1}) and (\ref{eq:req1lambdapos2}) hold whenever their corresponding estimates are used. 
We begin by considering the lower bound (\ref{eq:lower}). In particular, we will show that two-sided, everywhere blow-up in Theorem \ref{thm:lambdapos} corresponds to a diverging $\left\|u_x\right\|_p$ for all $p>1$. Then, by studying the upper bound (\ref{eq:upper}), we will find that if $q\in\mathbb{R}^+$ and $\lambda>q$ are such that only the maximum diverges at a finite $t_*>0$, then $u_x$ remains integrable for all $t\in[0,t_*]$, whereas, its regularity in smaller $L^p$ spaces for $t\in[0,t_*]$ will vary according to the value of the parameter $\lambda$ as a function of either $p$, $q$, or both.

Suppose $q/2<\lambda<q/p$ for $q\in\mathbb{R}^+$ and $p\in(1,2)$. Then (\ref{eq:lpos3})ii) holds as well as (\ref{eq:firstint1})ii), since $(q/2,q/p)\subset(0,q)$. Now, if $q\in(0,1)$ then $0<\frac{q}{2}<\lambda<\frac{q}{p}<q<\frac{q}{1-q},$ and so (\ref{eq:secint1})ii) applies, otherwise, (\ref{eq:secint1})ii) also holds for $q\geq1$ and $\lambda>0$. Similarly for $q\in(0,1)$, we have that $0<\frac{q}{2}<\lambda<\frac{q}{p}<\frac{q}{p(1-q)}$ so that (\ref{eq:lpos6}) is valid. Alternatively, this last estimate also holds if $q\geq1$ for $\lambda>0$. Accordingly, using these estimates in (\ref{eq:lower}) yields, after simplification,
\begin{equation*}
\begin{split}
\left\|u_x(\cdot,t)\right\|_p&\geq\frac{1}{\left|\lambda\eta(t)\right|\bar{\mathcal{K}}_0(t)^{^{2\lambda+\frac{1}{p}}}}\left|\int_0^1{\frac{d\alpha}{\mathcal{J}(\alpha,t)^{^{1+\frac{1}{\lambda p }}}}}-\frac{\bar{\mathcal{K}}_1(t)}{\bar{\mathcal{K}}_0(t)}\int_0^1{\frac{d\alpha}{\mathcal{J}(\alpha,t)^{\frac{1}{\lambda p}}}}\right|
\\
&\sim C(p-1)\mathcal{J}(\overline\alpha,t)^{\sigma(p,q,\lambda)}
\end{split}
\end{equation*} 
for $\eta_*-\eta>0$ small and $\sigma(p,q,\lambda)=1+\frac{1}{q}\left(1-\frac{1}{p}-2\lambda\right).$ Consequently, $\left\|u_x\right\|_p$ will diverge as $\eta\uparrow\eta_*$ if $\sigma(p,q,\lambda)<0$, or equivalently for $p(1+q-2\lambda)-1<0$. Since $q/2<\lambda<q/p$ for $q>0$ and $p\in(1,2)$, we find this to be the case as long as 
$$q\in\mathbb{R}^+,\,\,\,\,\,\,\,\,\,1<p<1+\frac{q}{1+q},\,\,\,\,\,\,\,\,\,\,\,\frac{1}{2}\left(q+1-\frac{1}{p}\right)<\lambda<\frac{q}{p}.$$
Therefore, by taking $p-1>0$ arbitrarily small, we find that
$$\lim_{t\uparrow t_*}\left\|u_x(\cdot,t)\right\|_p=+\infty$$
for $\lambda\in(q/2,q)$ and $q>0$. The existence of a finite blow-up time $t_*>0$ follows from Theorem \ref{thm:lambdapos}(\ref{it:blow1}), while the embedding 
\begin{equation}
\label{eq:embedding}
\begin{split}
L^s\hookrightarrow L^p,\,\,\,\,\,\,\,\,\,\,\,\,\,\,\,s\geq p,
\end{split}
\end{equation}
yields $L^p$ blow-up for any $p>1$. Next, for $q\in(1/3,1/2)$ we consider values of $\lambda$ lying between stagnation point-form solutions to the 2D ($\lambda=1$) and 3D ($\lambda=1/2$) incompressible Euler equations. Suppose $\frac{1}{2}<\lambda<\frac{q}{p(1-q)}$ for $1<p<\frac{2q}{1-q}$ and $q\in(1/3,1/2)$. The condition on $p$ simply guarantees that $\frac{q}{p(1-q)}>\frac{1}{2}$ for $q$ as specified. Furthermore, we have that
$$0<\frac{1}{2p}<\frac{1}{2}<\lambda<\frac{q}{p(1-q)}<\frac{q}{1-q}\in(1/2,1),$$
so that relative to our choice of $\lambda$ and $q$, $\lambda\in(1/2,1).$ Using the above, we find that (\ref{eq:firstint1})i), (\ref{eq:secint1})ii), (\ref{eq:lpos3})i) and (\ref{eq:lpos6}) hold, and so (\ref{eq:lower}) leads to 
\begin{equation}
\label{eq:touse1}
\begin{split}
\left\|u_x(\cdot,t)\right\|_p&\geq\frac{1}{\left|\lambda\eta(t)\right|\bar{\mathcal{K}}_0(t)^{^{2\lambda+\frac{1}{p}}}}\left|\int_0^1{\frac{d\alpha}{\mathcal{J}(\alpha,t)^{^{1+\frac{1}{\lambda p }}}}}-\frac{\bar{\mathcal{K}}_1(t)}{\bar{\mathcal{K}}_0(t)}\int_0^1{\frac{d\alpha}{\mathcal{J}(\alpha,t)^{\frac{1}{\lambda p}}}}\right|
\\
&\sim C\left|C\mathcal{J}(\overline\alpha,t)^{\frac{1}{q}-\frac{1}{\lambda p}-1}-\mathcal{J}(\overline\alpha,t)^{\frac{1}{q}-\frac{1}{\lambda}-1}\right|
\\
&\sim C\mathcal{J}(\overline\alpha,t)^{\frac{1}{q}-\frac{1}{\lambda}-1}
\end{split}
\end{equation} 
for $\eta_*-\eta>0$ small. Therefore, as $\eta\uparrow\eta_*$, $\left\|u_x\right\|_p$ will diverge for all $\frac{1}{2}<\lambda<\frac{q}{p(1-q)}$, $q\in(1/3,1/2)$ and $1<p<\frac{2q}{1-q}$. Here, we can take $p-1>0$ arbitrarily small and use (\ref{eq:embedding}) to conclude the finite-time blow-up, as $t\uparrow t_*$, of $\left\|u_x\right\|_p$ for all $\frac{1}{2}<\lambda<\frac{q}{1-q}$,  $q\in(1/3,1/2)$ and $p>1$. The existence of a finite blow-up time $t_*>0$ is guaranteed by the second part of Theorem \ref{thm:lambdapos}(\ref{it:blow2}). Now suppose $q\in(1/2,1)$ and $q<\lambda<\frac{q}{p(1-q)}$ for $1<p<\frac{1}{1-q}$. This means that $\lambda>q>1/2$ and
\begin{equation}
0<\frac{q}{p}<q<\lambda<\frac{q}{p(1-q)}<\frac{q}{1-q}.
\end{equation}
Consequently, using (\ref{eq:firstint1})i), (\ref{eq:secint1})ii), (\ref{eq:lpos3})i) and (\ref{eq:lpos6}) in (\ref{eq:lower}), implies (\ref{eq:touse1}), possibly with distinct positive constants $C$. Then, as $\eta\uparrow\eta_*$,
$$\left\|u_x\right\|_p\to+\infty$$
for all $q<\lambda<\frac{q}{p(1-q)}$, $q\in(1/2,1)$ and $1<p<\frac{1}{1-q}$. Similarly, if $q$ and $p$ are as above, but $\frac{q}{p(1-q)}<\lambda<\frac{q}{1-q}$, (\ref{eq:firstint1})i), (\ref{eq:secint1})ii), (\ref{eq:lpos3})i) and (\ref{eq:lpos4}) imply
\begin{equation*}
\begin{split}
\left\|u_x(\cdot,t)\right\|_p&\geq\frac{1}{\left|\lambda\eta(t)\right|\bar{\mathcal{K}}_0(t)^{^{2\lambda+\frac{1}{p}}}}\left|\int_0^1{\frac{d\alpha}{\mathcal{J}(\alpha,t)^{^{1+\frac{1}{\lambda p }}}}}-\frac{\bar{\mathcal{K}}_1(t)}{\bar{\mathcal{K}}_0(t)}\int_0^1{\frac{d\alpha}{\mathcal{J}(\alpha,t)^{\frac{1}{\lambda p}}}}\right|
\\
&\sim C\left|C-\mathcal{J}(\overline\alpha,t)^{\frac{1}{q}-\frac{1}{\lambda}-1}\right|
\\
&\sim C\mathcal{J}(\overline\alpha,t)^{\frac{1}{q}-\frac{1}{\lambda}-1}\to+\infty
\end{split}
\end{equation*}  
as $\eta\uparrow\eta_*$. From these last two results and (\ref{eq:embedding}), we see that, as $\eta\uparrow\eta_*$, $\left\|u_x\right\|_p\to+\infty$ for all $q<\lambda<\frac{q}{1-q}$, $q\in(1/2,1)$ and $p>1$. The existence of a finite $t_*>0$ follows from Theorem \ref{thm:lambdapos}(\ref{it:blow3}). Lastly, suppose $\lambda>q>1$ and $p>1$. Then, estimates (\ref{eq:firstint1})i), (\ref{eq:secint1})ii), (\ref{eq:lpos3})i) and (\ref{eq:lpos6}) hold for $\eta_*-\eta>0$ small. As a result, (\ref{eq:lower}) implies (\ref{eq:touse1}), which in turn leads to $L^p$ blow-up of $u_x$ for any $\lambda>q>1$ and $p>1$, as $\eta\uparrow\eta_*$. The existence of a finite $t_*>0$ is due to Theorem \ref{thm:lambdapos}(\ref{it:blow4}).

Notice from the results established so far, that some values of $\lambda>q/2$ for $q>0$ are missing. These are precisely the cases for which the lower bound (\ref{eq:lower}) yields inconclusive information about the $L^p$ regularity of $u_x$ for $p\in(1,+\infty)$. To examine some aspects of the $L^p$ regularity of $u_x$ for $t\in[0,t_*]$ and $p\in[1,+\infty)$ in these particular cases, we consider the upper bound (\ref{eq:upper}). First, suppose $q\in(0,1/2)$ and $\lambda>\frac{1}{2-p}$ for $p\in[1,2)$. Then $\lambda>\frac{1}{2-p}>1>\frac{q}{1-q}>q,$ so that (\ref{eq:firstint1})i), (\ref{eq:secint1})i) and (\ref{eq:lpos8}), imply that the integral terms in (\ref{eq:upper}) remain bounded, and nonzero, for $\eta\in[0,\eta_*]$. We conclude that 
\begin{equation}
\label{eq:upp1}
\begin{split}
\lim_{t\uparrow t_*}\left\|u_x(\cdot,t)\right\|_p<+\infty
\end{split}
\end{equation}
for all $\lambda>\frac{1}{2-p}$, $q\in(0,1/2)$ and $p\in[1,2)$. Here, $t_*>0$ denotes the finite $L^{\infty}$ blow-up time for $u_x$ established in the first part of Theorem \ref{thm:lambdapos}(\ref{it:blow2}). Particularly, this result implies that even though $\lim_{t\uparrow t_*}\left\|u_x\right\|_{\infty}=+\infty$ for all $\lambda>1$ when $q\in(0,1/2)$, $u_x$ remains integrable for $t\in[0,t_*]$. Finally, suppose $q\in(1/2,1)$ and $\lambda>\frac{q}{1-pq}$ for $p\in[1,1/q)$. Then $\lambda>\frac{q}{1-pq}\geq\frac{q}{1-q}>1>q>\frac{1}{2}$,
and so (\ref{eq:firstint1})i), (\ref{eq:secint1})i) and (\ref{eq:lpos8}) hold. Consequently, (\ref{eq:upper}) implies that $\lim_{t\uparrow t_*}\left\|u_x\right\|_p<+\infty$ for all $\lambda>\frac{q}{1-pq}$, $q\in(1/2,1)$ and $p\in[1,1/q)$. This time, $t_*>0$ stands as the finite $L^{\infty}$ blow-up time for $u_x$ established in the second part of Theorem \ref{thm:lambdapos}(\ref{it:blow3}). Furthermore, this result tells us that even though $\lim_{t\uparrow t_*}\left\|u_x\right\|_{\infty}=+\infty$ for $\lambda>\frac{q}{1-q}$ and $q\in(1/2,1)$, $u_x$ stays integrable for all $t\in[0,t_*]$. These last two results on the integrability of $u_x$, for $t\in[0,t_*]$, become more apparent if we set $p=1$ in (\ref{eq:upper}) to obtain $$\left\|u_x(\cdot,t)\right\|_1\leq\frac{2\bar{\mathcal{K}}_1(t)}{\left|\lambda\eta(t)\right|\bar{\mathcal{K}}_0(t)^{1+2\lambda}}.$$ 
The result then follows from the above inequality and estimates (\ref{eq:firstint1})i) and (\ref{eq:secint1})i). Theorem \ref{thm:lplambdapos} below summarizes the above results. 

\begin{theorem}
\label{thm:lplambdapos}
Consider the initial boundary value problem (\ref{eq:nonhomo})-(\ref{eq:pbc}) for $u_0'(\alpha)$ satisfying (\ref{eq:expnew0}), and let $t_*>0$ be as in Theorem \ref{thm:lambdapos}.
\begin{enumerate}

\item\label{it:lplambdapos1} For $q>0$ and $\lambda\in[0,q/2]$, $\lim_{t\to+\infty}\left\|u_x\right\|_p<+\infty$ for all $p\geq1$. More particularly, $\lim_{t\to+\infty}\left\|u_x\right\|_p=0$ for $\lambda\in(0,q/2)$, while, as $t\to+\infty$, $u_x$ converges to a nontrivial, $L^{\infty}$ function when $\lambda=q/2$.

\item\label{it:lplambdapos2} Let $p>1$. Then, there exists a finite $t_*>0$ such that for all $q>0$ and $\lambda\in(q/2,q)$, $\lim_{t\uparrow t_*}\left\|u_x\right\|_p=+\infty$. Similarly for $\lambda>q>1$, or $\frac{1}{2}<\lambda<\frac{q}{1-q}$, $q\in(1/3,1/2)$.

\item\label{it:lplambdapos3} For all $q\in(0,1/2)$, $\lambda>\frac{1}{2-p}$ and $p\in[1,2)$, there exists a finite $t_*>0$ such that $\lim_{t\uparrow t_*}\left\|u_x\right\|_p<+\infty$ (see Theorem \ref{thm:lambdapos}(\ref{it:blow2})). 

\item\label{it:lplambdapos4} Suppose $q\in(1/2,1)$. Then, there exists a finite $t_*>0$ such that $\lim_{t\uparrow t_*}\left\|u_x\right\|_p=+\infty$ for $q<\lambda<\frac{q}{1-q}$ and $p>1$, whereas, if $\lambda>\frac{q}{1-pq}$ and $p\in[1,1/q)$, $\lim_{t\uparrow t_*}\left\|u_x\right\|_p<+\infty$ (see Theorem \ref{thm:lambdapos}(\ref{it:blow3})).
\end{enumerate}
\end{theorem}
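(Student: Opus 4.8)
The plan is to assemble the four assertions from the estimates and case analyses already carried out in this subsection, since each has effectively been proved in the discussion preceding the statement; what remains is to organize those arguments and, in each parameter window, record which alternative of Lemma \ref{lem:general} --- in the form of (\ref{eq:firstint1})--(\ref{eq:lpos11}) --- is in force. The two workhorses are the lower bound (\ref{eq:lower}), used to produce $L^p$ blow-up, and the upper bound (\ref{eq:upper}), used to prove $L^p$ boundedness up to $t_*$, together with the representation formula (\ref{eq:mainsolu}) and the embedding (\ref{eq:embedding}). In every case the finiteness (or infiniteness) of $t_*$ is quoted from the corresponding item of Theorem \ref{thm:lambdapos}.

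For part (\ref{it:lplambdapos1}) I would invoke Theorem \ref{thm:lambdapos}(\ref{it:global}): when $\lambda\in(0,q/2)$ one has $\|u_x\|_\infty\to0$ as $t\to+\infty$, so $\|u_x\|_p\le\|u_x\|_\infty\to0$ for every $p\ge1$ because $[0,1]$ carries unit mass; when $\lambda=q/2$, formula (\ref{eq:mainsolu}) together with estimates (\ref{eq:ux5})--(\ref{eq:ux6}) shows that $u_x$ converges to a nontrivial bounded limit profile, which lies in every $L^p(0,1)$, yielding the stated finite limit.

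For part (\ref{it:lplambdapos2}), and likewise for the blow-up half of part (\ref{it:lplambdapos4}), I would substitute the sharp estimates for $\bar{\mathcal K}_0$, $\bar{\mathcal K}_1$ and for the integrals in (\ref{eq:remaining}) into the lower bound (\ref{eq:lower}): namely (\ref{eq:firstint1})ii), (\ref{eq:secint1})ii), (\ref{eq:lpos3})ii) and (\ref{eq:lpos6}) when $\lambda\in(q/2,q)$, and (\ref{eq:firstint1})i), (\ref{eq:secint1})ii), (\ref{eq:lpos3})i), (\ref{eq:lpos4})/(\ref{eq:lpos6}) in the regimes $\lambda>q>1$, $q<\lambda<\frac{q}{1-q}$ with $q\in(1/2,1)$, and $\frac12<\lambda<\frac{q}{1-q}$ with $q\in(1/3,1/2)$. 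Each substitution gives $\|u_x\|_p\ge C\,\mathcal J(\overline\alpha,t)^{\sigma(p,q,\lambda)}$ with an explicitly computable exponent --- e.g.\ $\sigma=1+\frac1q\bigl(1-\frac1p-2\lambda\bigr)$ in the first regime, and $\sigma=\frac1q-\frac1\lambda-1<0$ in the others. One checks that $\sigma<0$ holds for every $p$ in $(1,1+\delta)$ with $\delta=\delta(\lambda,q)>0$ (in the $\lambda>q>1$ case for all $p>1$ directly), so that $\|u_x\|_p\to+\infty$ there, and finally propagates the blow-up to all $p>1$ by means of the embedding (\ref{eq:embedding}).

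For part (\ref{it:lplambdapos3}) and the boundedness half of part (\ref{it:lplambdapos4}) the argument runs through the upper bound (\ref{eq:upper}) instead. Here I would first verify that the chains of inequalities between $\lambda$, $q$, $p$ --- for instance $\lambda>\frac{1}{2-p}>1>\frac{q}{1-q}>q$ when $q\in(0,1/2)$, and $\lambda>\frac{q}{1-pq}\ge\frac{q}{1-q}>1>q>\frac12$ when $q\in(1/2,1)$ --- place all three integrals appearing in (\ref{eq:upper}), as well as $\bar{\mathcal K}_0$ and $\bar{\mathcal K}_1$, into their bounded, nonzero cases (\ref{eq:firstint1})i), (\ref{eq:secint1})i), (\ref{eq:lpos8}); since the remaining prefactors stay bounded as $t\uparrow t_*$, the right-hand side of (\ref{eq:upper}) is bounded there, whence $\lim_{t\uparrow t_*}\|u_x\|_p<+\infty$ (boundedness on all of $[0,t_*]$ then follows from continuity of $t\mapsto\|u_x(\cdot,t)\|_p$). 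The main obstacle is purely organizational: one must keep straight which branch of (\ref{eq:firstint1})--(\ref{eq:lpos11}) is active in each $(\lambda,q,p)$ window, verifying nested inequalities such as $0<\frac{q}{p}<q<\lambda<\frac{q}{p(1-q)}<\frac{q}{1-q}$, and then confirm that the resulting power of $\mathcal J(\overline\alpha,t)$ carries the correct sign; a genuine subtlety is that (\ref{eq:lower}) is informative only for $p$ near $1$ --- so the embedding (\ref{eq:embedding}) is essential --- and degenerates in exactly the one-sided-blow-up regimes, forcing the switch to (\ref{eq:upper}). The exceptional values excluded by (\ref{eq:lemmaass1}), (\ref{eq:req1lambdapos1}), (\ref{eq:req1lambdapos2}) are recovered by the continuity argument noted there.
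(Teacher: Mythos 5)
Your proposal is correct and follows essentially the same route as the paper: the theorem is proved there precisely by feeding the case-by-case estimates (\ref{eq:firstint1})--(\ref{eq:lpos11}) into the lower bound (\ref{eq:lower}) for the blow-up claims (with the exponent $\sigma(p,q,\lambda)=1+\frac{1}{q}\bigl(1-\frac{1}{p}-2\lambda\bigr)$ in the $\lambda\in(q/2,q)$ regime and $\frac{1}{q}-\frac{1}{\lambda}-1$ in the others, then invoking the embedding (\ref{eq:embedding})) and into the upper bound (\ref{eq:upper}) for the boundedness claims, with part (\ref{it:lplambdapos1}) read off from Theorem \ref{thm:lambdapos}(\ref{it:global}). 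Your bookkeeping of which branch of each estimate is active in each $(\lambda,q,p)$ window matches the paper's.
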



\subsubsection{$L^{\infty}$ regularity for $\lambda<0$ and $q\in\mathbb{R}^+$}\hfill
\label{subsubsec:generalcaselambdanegpinf}

We now examine the $L^{\infty}$ regularity of $u_x$ for parameters $\lambda<0$ and initial data satisfying (\ref{eq:expnew00}) for arbitrary $q\in\mathbb{R}^+$. We prove Theorem \ref{thm:lambdaneg} below.
\begin{theorem}
\label{thm:lambdaneg}
Consider the initial boundary value problem (\ref{eq:nonhomo})-(\ref{eq:pbc}) for $u_0'(\alpha)$ satisfying (\ref{eq:expnew00}). Furthermore,
\begin{enumerate}
\item\label{it:lambdaneg1} Suppose $\lambda\in[-1,0)$ and $q>0$. Then, there exists a finite $t_*>0$ such that only the minimum diverges, $m(t)\to-\infty,$ as $t\uparrow t_*$ (one-sided, discrete blow-up). 
\item\label{it:lambdaneg2} Suppose $\lambda<-1$ and $q\in(0,1)$ satisfy $\lambda\neq\frac{q}{1-nq}$ and $q\neq\frac{1}{n}$\, $\forall$\, $n\in\mathbb{N}$. Then, a one-sided discrete blow-up, as described in (\ref{it:lambdaneg1}), occurs in finite-time. Similarly for $\frac{q}{1-q}<\lambda<-1$ and $q>1$. 
\item\label{it:lambdaneg3} Suppose $\lambda<\frac{q}{1-q}$ and $q>1$. Then, there is a finite $t_*>0$ such that both the maximum $M(t)$ and the minimum $m(t)$ diverge to $+\infty$ and respectively to $-\infty$ as $t\uparrow t_*$. Moreover, $\lim_{t\uparrow t_*}u_x(\gamma(\alpha,t),t)=+\infty$ for $\alpha\notin\bigcup_{i,j}\{\overline\alpha_i\}\cup\{\underline\alpha_j\}$ (two-sided, everywhere blow-up).
\end{enumerate}
Finally, for $\lambda<0$, $q>0$ and $t_*>0$ as above, the jacobian (\ref{eq:sum}) satisfies
\begin{equation}
\label{eq:lastrealjac}
\lim_{t\uparrow t_*}\gamma_{\alpha}(\alpha,t)=
\begin{cases}
0,\,\,\,\,\,\,\,\,\,\,\,\,\,\,&\alpha=\underline\alpha_j,
\\
C(\alpha),\,\,\,\,\,\,\,\,\,\,\,&\alpha\neq\underline\alpha_j
\end{cases}
\end{equation}
where $C(\alpha)\in\mathbb{R}^+$ depends on the choice of $\lambda$, $q$ and $\alpha\neq\underline\alpha_j$.

\begin{proof}
Throughout, let $C$ denote a positive constant that may depend on $\lambda<0$, $q>0$ and recall that $\eta_*=\frac{1}{\lambda m_0}.$

\vspace{0.05in}
\textbf{Proof of Statement} (\ref{it:lambdaneg1})
\vspace{0.05in}

Suppose $\lambda\in[-1,0)$ and assume $u_0'(\alpha)$ satisfies (\ref{eq:expnew00}) for some $q>0$. Then (\ref{eq:thirdint1}) and (\ref{eq:lastest}) imply that both integral terms in (\ref{eq:mainsolu}) remain finite and nonzero as $\eta\uparrow\eta_*$.\footnote[4]{Recall that $u_0'$ is assumed to be bounded and, at least, $C^0(0,1)\, a.e.$} More particularly, one can show that (\ref{eq:quick3p}) and (\ref{eq:quick4p}) hold for all $\eta\in[0,\eta_*].$ Therefore, blow-up of (\ref{eq:mainsolu}) depends, solely, on the behaviour of the space-dependent term $\mathcal{J}(\alpha,t)^{-1}$. Accordingly, we set $\alpha=\underline\alpha_j$ into (\ref{eq:mainsolu}) and use (\ref{eq:maxmin})ii) to find that the minimum diverges, $m(t)\to-\infty$, as $\eta\uparrow\eta_*$. However, if $\alpha\neq\underline\alpha_j$, the definition of $m_0$ implies that the space-dependent term now remains bounded, and positive, for $\eta\in[0,\eta_*]$. The existence of a finite blow-up time $t_*>0$ for the minimum follows from (\ref{eq:etaivp}) and (\ref{eq:thirdint1}). In fact, we may use (\ref{eq:etaivp}) and (\ref{eq:quick3p}) to obtain the estimate
\begin{equation}
\label{eq:t1}
\begin{split}
\frac{\left|m_0\right|}{\left|\lambda\right|(m_0-M_0)^2}\leq t_*\leq\eta_*.
\end{split}
\end{equation}

\vspace{0.05in}
\textbf{Proof of Statements} (\ref{it:lambdaneg2}) and (\ref{it:lambdaneg3})
\vspace{0.05in}

Now suppose $\lambda<-1$. As in  the previous case, the term $\bar{\mathcal{K}}_0(t)$ remains finite, and positive, for all $\eta\in[0,\eta_*]$. Particularly, $\bar{\mathcal{K}}_0(t)$ satisfies (\ref{eq:newestq}) for all $\eta\in[0,\eta_*]$. On the other hand, $\bar{\mathcal{K}}_1(t)$ now either converges or diverges, as $\eta\uparrow\eta_*$, according to (\ref{eq:lastest}) or (\ref{eq:lastest1}), respectively. If $\lambda<-1$ and $q>0$ are such that (\ref{eq:lastest}) holds, then part (\ref{it:lambdaneg2}) follows just as part (\ref{it:lambdaneg1}). However, if $q>1$ and $\lambda<\frac{q}{1-q}$, we use (\ref{eq:thirdint1}) and (\ref{eq:lastest1}) on (\ref{eq:mainsolu}), to obtain
$$u_x(\gamma(\alpha,t),t)\sim Cm_0\left(\frac{1}{\mathcal{J}(\alpha,t)}-C\mathcal{J}(\underline\alpha_j,t)^{\frac{1}{q}-\frac{1}{\lambda}-1}\right)$$
for $\eta_*-\eta>0$ small. Setting $\alpha=\underline\alpha_j$ into the above implies that
$$m(t)\sim\frac{Cm_0}{\mathcal{J}(\underline\alpha,t)}\to-\infty$$
as $\eta\uparrow\eta_*$, whereas, for $\alpha\neq\underline\alpha_i$, the space-dependent term now remains bounded, as a result, the second term dominates and
$$u_x(\gamma(\alpha,t),t)\sim C\left|m_0\right|\mathcal{J}(\underline\alpha_j,t)^{\frac{1}{q}-\frac{1}{\lambda}-1}\to+\infty$$
as $\eta\uparrow\eta_*$. The existence of a finite blow-up time $t_*>0$ follows as in the case $\lambda\in[-1,0)$. In fact, (\ref{eq:etaivp}) and (\ref{eq:newestq}) yield the lower bound $\eta_*\leq t_*$.\footnote[5]{Which we may compare to (\ref{eq:t1}). From (\ref{eq:etaivp}), we see that the two coincide, $t_*=\eta_*$, in the case of Burgers' equation $\lambda=-1$.} Finally, (\ref{eq:lastrealjac}) is derived straightforwardly from (\ref{eq:sum}) and (\ref{eq:thirdint1}). See \S\ref{sec:examples} for examples.\end{proof}
\end{theorem}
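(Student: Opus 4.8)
The plan is to feed the catalogued estimates of \S\ref{subsubsec:integralestimates} into the representation formula (\ref{eq:mainsolu}) and to track, as $\eta\uparrow\eta_*=\frac{1}{\lambda m_0}$, the competition between the space-dependent factor $\mathcal{J}(\alpha,t)^{-1}$ and the ratio $\bar{\mathcal{K}}_1(t)/\bar{\mathcal{K}}_0(t)$. The basic geometric fact is that, for $\lambda<0$, $\mathcal{J}(\underline\alpha_j,t)=1-\lambda\eta(t)m_0\to0$ as $\eta\uparrow\eta_*$, whereas for every $\alpha\neq\underline\alpha_j$ the strict inequality $u_0'(\alpha)>m_0$ gives $\mathcal{J}(\alpha,\eta_*)=1-\frac{u_0'(\alpha)}{m_0}>0$; hence $\mathcal{J}(\cdot,t)^{-1}$ stays bounded off $\underline\alpha_j$ and can blow up only there. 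Since, moreover, $\mathcal{J}(\alpha,t)>0$ for all $\alpha$ as long as $\eta<\eta_*$, the solution of (\ref{eq:nonhomo})--(\ref{eq:pbc}) persists on that range, so $t_*=\lim_{\eta\uparrow\eta_*}t(\eta)$ with $t(\eta)$ recovered from (\ref{eq:time}), $dt=\bar{\mathcal{K}}_0(t)^{2\lambda}\,d\eta$.

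For parts (\ref{it:lambdaneg1}) and (\ref{it:lambdaneg2}) I would first check that $\bar{\mathcal{K}}_0(t)$ and $\bar{\mathcal{K}}_1(t)$ both remain finite and bounded away from $0$ up to $\eta_*$. For $\bar{\mathcal{K}}_0$ the exponent is $\frac1\lambda<0$, so the last assertion of Lemma \ref{lem:general}(3) (equivalently the explicit bounds (\ref{eq:quick3p}), (\ref{eq:newestq})) suffices. For $\bar{\mathcal{K}}_1$ the exponent is $b=1+\frac1\lambda$, which is $\le0$ for $\lambda\in[-1,0)$ and lies in $(0,\frac1q)$ precisely when $\lambda<-1$ with $q\in(0,1)$, or $q>1$ with $\frac{q}{1-q}<\lambda<-1$ --- exactly the hypotheses of (\ref{it:lambdaneg1})--(\ref{it:lambdaneg2}); so Lemma \ref{lem:general}(3), under the stated non-resonance exclusions for $\frac1q,\,b,\,b-\frac1q$, again gives boundedness, i.e.\ estimates (\ref{eq:thirdint1}) and (\ref{eq:lastest}). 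Hence the prefactor $(\lambda\eta(t)\bar{\mathcal{K}}_0(t)^{2\lambda})^{-1}$ in (\ref{eq:mainsolu}) is negative and bounded in modulus above and below, and $u_x(\gamma(\alpha,t),t)$ inherits the behaviour of $\mathcal{J}(\alpha,t)^{-1}$: at $\alpha=\underline\alpha_j$, with (\ref{eq:maxmin})ii), $m(t)\sim Cm_0\,\mathcal{J}(\underline\alpha_j,t)^{-1}\to-\infty$, while at any $\alpha\neq\underline\alpha_j$ the expression stays bounded; as $u_x(\gamma(\cdot,t),t)$ is a monotone function of $\mathcal{J}(\cdot,t)^{-1}$, its supremum sits at $\overline\alpha_i$, where $\mathcal{J}(\overline\alpha_i,\eta_*)=1-\frac{M_0}{m_0}>1$, so $M(t)$ stays finite --- a one-sided, discrete blow-up. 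Finiteness of $t_*$ follows from $dt=\bar{\mathcal{K}}_0^{2\lambda}\,d\eta$ with $\bar{\mathcal{K}}_0$ bounded and $\eta_*<\infty$; keeping the explicit bounds on $\bar{\mathcal{K}}_0$ yields (\ref{eq:t1}), and the bound $\bar{\mathcal{K}}_0\le1$ of (\ref{eq:newestq}) gives $\eta_*\le t_*$ when $\lambda<-1$.

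For part (\ref{it:lambdaneg3}), $q>1$ and $\lambda<\frac{q}{1-q}$, the integral $\bar{\mathcal{K}}_1$ changes type: now $b=1+\frac1\lambda>\frac1q$ --- an inequality equivalent to $\lambda<\frac{q}{1-q}$ since $q>1$ --- so Lemma \ref{lem:general}(2) applies and yields $\bar{\mathcal{K}}_1(t)\sim C_5\,\mathcal{J}(\underline\alpha_j,t)^{\frac1q-1-\frac1\lambda}\to+\infty$, i.e.\ (\ref{eq:lastest1}), while $\bar{\mathcal{K}}_0$ still stays bounded and positive via (\ref{eq:thirdint1}). Substituting into (\ref{eq:mainsolu}) gives $u_x(\gamma(\alpha,t),t)\sim Cm_0\big(\mathcal{J}(\alpha,t)^{-1}-C\,\mathcal{J}(\underline\alpha_j,t)^{\frac1q-1-\frac1\lambda}\big)$ for $\eta_*-\eta$ small. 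At $\alpha=\underline\alpha_j$ the exponent comparison $-1<\frac1q-1-\frac1\lambda$, i.e.\ $\frac1\lambda<\frac1q$ (true since $\lambda<0<\frac1q$), shows the first term dominates, so $m(t)\sim Cm_0\,\mathcal{J}(\underline\alpha_j,t)^{-1}\to-\infty$; at any $\alpha\neq\underline\alpha_j$ the factor $\mathcal{J}(\alpha,t)^{-1}$ is bounded, the second term dominates, and $u_x(\gamma(\alpha,t),t)\sim C|m_0|\,\mathcal{J}(\underline\alpha_j,t)^{\frac1q-1-\frac1\lambda}\to+\infty$ (using $-m_0>0$). Thus $M(t)\to+\infty$ as well, a two-sided everywhere blow-up, with $t_*$ finite and $\eta_*\le t_*$ as before. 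The jacobian claim (\ref{eq:lastrealjac}) then drops out: from (\ref{eq:sum})--(\ref{eq:def}), $\gamma_\alpha(\alpha,t)=\mathcal{J}(\alpha,t)^{-1/\lambda}/\bar{\mathcal{K}}_0(t)$, and since $-\frac1\lambda>0$ with $\bar{\mathcal{K}}_0(t)$ tending to a finite positive limit by (\ref{eq:thirdint1}), one has $\gamma_\alpha(\underline\alpha_j,t)\to0$ because $\mathcal{J}(\underline\alpha_j,t)\to0$, while $\gamma_\alpha(\alpha,t)\to\mathcal{J}(\alpha,\eta_*)^{-1/\lambda}/\bar{\mathcal{K}}_0(\eta_*)=:C(\alpha)\in\mathbb{R}^+$ for $\alpha\neq\underline\alpha_j$.

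\textbf{Main obstacle.} The calculations are short once the correct branch of Lemma \ref{lem:general} is identified, so the real work is the bookkeeping: verifying that the threshold at which $1+\frac1\lambda$ passes through $\frac1q$ is exactly $\lambda=\frac{q}{1-q}$ (which is what separates (\ref{it:lambdaneg2}) from (\ref{it:lambdaneg3})), confirming that the non-resonance conditions $\frac1q,\,1+\frac1\lambda,\,\frac1q-1-\frac1\lambda\notin\mathbb{Z}$ needed to invoke Lemmas \ref{lem:analcont} and \ref{lem:general}(3) are compatible with the stated hypotheses, and --- in (\ref{it:lambdaneg3}) --- deciding which of the two competing divergent terms $\mathcal{J}(\alpha,t)^{-1}$ and $\mathcal{J}(\underline\alpha_j,t)^{\frac1q-1-\frac1\lambda}$ prevails at $\underline\alpha_j$ as opposed to away from it.
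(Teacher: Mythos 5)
Your proposal is correct and follows essentially the same route as the paper: it feeds the branch-by-branch estimates of Lemma \ref{lem:general} (equivalently (\ref{eq:thirdint1}), (\ref{eq:lastest}), (\ref{eq:lastest1}) together with the explicit bounds (\ref{eq:quick3p})--(\ref{eq:quick4p})) into (\ref{eq:mainsolu}), identifies $\lambda=\frac{q}{1-q}$ as the threshold where $1+\frac{1}{\lambda}$ crosses $\frac{1}{q}$, and compares the exponents $-1$ and $\frac{1}{q}-1-\frac{1}{\lambda}$ at and away from $\underline\alpha_j$, exactly as the paper does. The sign bookkeeping, the finiteness of $t_*$ via $dt=\bar{\mathcal{K}}_0^{2\lambda}d\eta$ with $\bar{\mathcal{K}}_0$ bounded above and away from zero, and the jacobian limit (\ref{eq:lastrealjac}) all match the paper's argument.
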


\subsubsection{Further $L^p$ regularity for $\lambda\in\mathbb{R}^-$, $q\in\mathbb{R}^+$ and $p\in[1,+\infty)$}\hfill
\label{subsubsec:generalcaselambdanegpfin}

Let $t_*>0$ denote the finite $L^{\infty}$ blow-up time for $u_x$ in Theorem \ref{thm:lambdaneg} above. In this last section, we briefly examine the $L^p$ regularity of $u_x$, as $t\uparrow t_*$, for $\lambda\in\mathbb{R}^-$, $p\in[1,+\infty)$ and $u_0'$ satisfying (\ref{eq:expnew00}) for some $q\in\mathbb{R}^+$. As in \S\ref{subsubsec:generalcaselambdapospfin}, we will make use of (\ref{eq:upper}) and (\ref{eq:lower}). First of all, by the last part of Lemma \ref{lem:general}(3), we have that for $q>0$ and $p\geq1$,
\begin{equation}
\label{eq:reallast}
\int_0^1{\frac{d\alpha}{\mathcal{J}(\alpha,t)^{\frac{1}{\lambda p}}}}\sim C
\end{equation}
for $\eta_*-\eta>0$ small, $\eta_*=\frac{1}{\lambda m_0}$ and $\lambda<0$. Similarly 
\begin{equation}
\label{eq:fin1}
\int_0^1{\frac{d\alpha}{\mathcal{J}(\alpha,t)^{p+\frac{1}{\lambda}}}}\sim C
\end{equation}
for $-\frac{1}{p}\leq\lambda<0$. Moreover, due to the first part of $(3)$ in the Lemma, estimate (\ref{eq:fin1}) is also seen to hold, with different positive constants $C$, for $\lambda<-\frac{1}{p}$, $p\geq1$ and $q>0$ satisfying either of the following
\begin{equation}
\label{eq:laste0}
\begin{cases}
q\in(0,1/2),\,\,\, &p\in[1,2],\,\,\,\,\,\,\,\,\,\,\,\,\,\,\,\,\,\,\,\,\,\,\,\,\, \lambda<-\frac{1}{p},
\\
q\in(0,1/2),\,\,\, &p>2,\,\,\,\,\,\,\,\,\,\,\,\,\,\,\,\, \frac{1}{2-p}<\lambda<-\frac{1}{p},
\\
q\in(1/2,1),\,\,\, &p\in[1,1/q],\,\,\,\,\,\,\,\,\,\,\,\,\,\,\,\,\,\,\, \lambda<-\frac{1}{p},
\\
q\in(1/2,1),\,\,\, &p>\frac{1}{q},\,\,\,\,\,\,\,\,\,\,\,\,\, \frac{q}{1-pq}<\lambda<-\frac{1}{p},
\\
q>1,\, &p\geq1,\,\,\,\,\,\,\,\,\,\,\,\,\, \frac{q}{1-pq}<\lambda<-\frac{1}{p},
\end{cases}
\end{equation}
as well as
\begin{equation}
\label{eq:req2lambdaneg1}
\lambda\notin\left\{\frac{q}{1-q(p+n)},\frac{1}{1-p}\right\},\,\,\,\,\,\,\,\,\,\,\,q\neq\frac{1}{n}\,\,\,\,\forall\,\,\,\,\,n\in\mathbb{N}.
\end{equation}
We remark that in the cases where (\ref{eq:fin1}) diverges, it dominates the other terms in (\ref{eq:upper}), regardless of whether these converge or diverge, and so no information on the behaviour of $\left\|u_x\right\|_p$ is obtained. Consequently, we will omit those instances. Finally,  using Lemma \ref{lem:general}(2), one finds that
\begin{equation}
\label{eq:fin2}
\int_0^1{\frac{d\alpha}{\mathcal{J}(\alpha,t)^{1+\frac{1}{\lambda p}}}}\sim C\mathcal{J}(\underline\alpha_j,t)^{\frac{1}{q}-\frac{1}{\lambda p}-1}
\end{equation}
for $q>1$, $p\geq1$ and $\lambda<\frac{q}{p(1-q)}$. Analogously, if (\ref{eq:fin2}) converges, the lower bound (\ref{eq:lower}) yields no information on the $L^p$ regularity of $u_x$. For the remaining of this section, we will assume that (\ref{eq:lemmaass1}) holds whenever (\ref{eq:lastest}) is used for $\lambda<-1$ and $q\in(0,1)$. Also, (\ref{eq:req2lambdaneg1}) will be valid in those cases where estimate (\ref{eq:fin1}) is considered for $\lambda$, $p$ and $q$ as in (\ref{eq:laste0}). Suppose $\frac{q}{1-q}<\lambda<\frac{q}{p(1-q)}$ for $q>1$ and $p>1$. Then, using (\ref{eq:thirdint1}), (\ref{eq:lastest}), (\ref{eq:reallast}) and (\ref{eq:fin2}), in the lower bound (\ref{eq:lower}), implies that
$$\lim_{t\uparrow t_*}\left\|u_x(\cdot,t)\right\|_p=+\infty.$$
If instead, $\lambda<\frac{q}{1-q}$ for $q>1$ and $p>1$, then (\ref{eq:thirdint1}), (\ref{eq:lastest1}), (\ref{eq:reallast}) and (\ref{eq:fin2}) give
\begin{equation*}
\begin{split}
\left\|u_x(\cdot,t)\right\|_p&\geq\frac{1}{\left|\lambda\eta(t)\right|\bar{\mathcal{K}}_0(t)^{^{2\lambda+\frac{1}{p}}}}\left|\int_0^1{\frac{d\alpha}{\mathcal{J}(\alpha,t)^{^{1+\frac{1}{\lambda p }}}}}-\frac{\bar{\mathcal{K}}_1(t)}{\bar{\mathcal{K}}_0(t)}\int_0^1{\frac{d\alpha}{\mathcal{J}(\alpha,t)^{\frac{1}{\lambda p}}}}\right|
\\
&\sim C\left|C\mathcal{J}(\underline\alpha,t)^{\frac{1}{q}-\frac{1}{\lambda p}-1}-\mathcal{J}(\underline\alpha,t)^{\frac{1}{q}-\frac{1}{\lambda}-1}\right|
\\
&\sim C\mathcal{J}(\underline\alpha,t)^{\frac{1}{q}-\frac{1}{\lambda p}-1}\to+\infty
\end{split}
\end{equation*}
as $\eta\uparrow\eta_*$. For the upper bound (\ref{eq:upper}), we simply mention that estimates (\ref{eq:thirdint1}), (\ref{eq:lastest}) and (\ref{eq:fin1}) lead to several instances where $\left\|u_x\right\|_p$ remains finite for all $t\in[0,t_*]$. This can be shown, just as above, by using the appropriate estimates. For simplicity, we omit the details and summarize the results in Theorem \ref{thm:genlplambdaneg} below. 
\begin{theorem}
\label{thm:genlplambdaneg}
Consider the initial boundary value problem (\ref{eq:nonhomo})-(\ref{eq:pbc}) for $u_0'(\alpha)$ satisfying (\ref{eq:expnew00}), and let $t_*>0$ denote the finite $L^{\infty}$ blow-up time for $u_x$ as described in Theorem \ref{thm:lambdaneg}. 
\begin{enumerate}
\item\label{it:plambdaneg1} Let $q\in(0,1/2)$. Then, $\lim_{t\uparrow t_*}\left\|u_x\right\|_p<+\infty$ for either $\lambda<0$ and $p\in[1,2]$, or $\frac{1}{2-p}<\lambda<0$ and $p>2$.
\item\label{it:plambdaneg2} Let $q\in(1/2,1)$. Then, $\lim_{t\uparrow t_*}\left\|u_x\right\|_p<+\infty$ for either $\lambda<0$ and $p\in[1,1/q]$, or $\frac{q}{1-pq}<\lambda<0$ and $p>1/q$.
\item\label{it:plambdaneg3} Let $q>1$. Then $\lim_{t\uparrow t_*}\left\|u_x\right\|_p<+\infty$ for $\frac{q}{1-pq}<\lambda<0$ and $p\geq1$, whereas $\lim_{t\uparrow t_*}\left\|u_x\right\|_p=+\infty$ for $\lambda<\frac{q}{p(1-q)}$ and $p>1$.
\end{enumerate}
When applicable, (\ref{eq:lemmaass1}) and (\ref{eq:req2lambdaneg1}) apply to (\ref{it:plambdaneg1}) and (\ref{it:plambdaneg2}) above.
\end{theorem}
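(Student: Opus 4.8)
The plan is to derive all three parts by inserting the estimates for $\bar{\mathcal{K}}_0(t),\bar{\mathcal{K}}_1(t)$ and for the auxiliary integrals (\ref{eq:reallast})--(\ref{eq:fin2}), together with (\ref{eq:thirdint1}), (\ref{eq:lastest}), (\ref{eq:lastest1}), into the global bounds (\ref{eq:upper}) and (\ref{eq:lower}). The first step is to dispose of the prefactors: by Theorem \ref{thm:lambdaneg} the blow-up time $t_*>0$ is finite, so $\eta(t)\uparrow\eta_*=\frac1{\lambda m_0}<+\infty$ with $\eta(0)=0$, which keeps $\eta(t)$ inside a compact subinterval of $(0,\eta_*]$ for $t$ near $t_*$; and (\ref{eq:thirdint1}) gives $\bar{\mathcal{K}}_0(t)\sim C$ for $\lambda<0$, bounded above and below. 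Hence in both (\ref{eq:upper}) and (\ref{eq:lower}) the factors involving $\eta(t)$ and $\bar{\mathcal{K}}_0(t)$ are $\sim C$, and the size of $\|u_x(\cdot,t)\|_p$ is controlled by the bracketed integral expressions alone.

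For the finite-norm claims — all of (\ref{it:plambdaneg1}), (\ref{it:plambdaneg2}), and the first half of (\ref{it:plambdaneg3}) — I would work from the upper bound (\ref{eq:upper}), whose right-hand side involves only $\bar{\mathcal{K}}_1(t)$ and $\int_0^1\mathcal{J}(\alpha,t)^{-(p+1/\lambda)}d\alpha$. Under the stated hypotheses, (\ref{eq:lastest}) gives $\bar{\mathcal{K}}_1(t)\sim C$ (using (\ref{eq:condthirdint1}), together with (\ref{eq:lemmaass1}) where required), while (\ref{eq:fin1}) and the parameter windows (\ref{eq:laste0}) give $\int_0^1\mathcal{J}^{-(p+1/\lambda)}\sim C$ exactly in the ranges of $(\lambda,p,q)$ named in the theorem; the chains of inequalities such as $\lambda>\frac{q}{1-pq}\ge\frac{q}{1-q}>1>q>\frac12$ are what confirm that the competing hypotheses of Lemma \ref{lem:general}(1)--(3) hold simultaneously. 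Substituting these into (\ref{eq:upper}) bounds the whole right-hand side uniformly for $\eta_*-\eta>0$ small, giving $\lim_{t\uparrow t_*}\|u_x\|_p<+\infty$; the case $p=1$ is the transparent prototype, via $\|u_x(\cdot,t)\|_1\le 2\bar{\mathcal{K}}_1(t)/(|\lambda\eta(t)|\bar{\mathcal{K}}_0(t)^{1+2\lambda})$.

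For the blow-up claim in (\ref{it:plambdaneg3}) I would use the lower bound (\ref{eq:lower}) with (\ref{eq:reallast}), (\ref{eq:fin2}), and — depending on whether $\lambda$ exceeds or falls below $\frac{q}{1-q}$ — either (\ref{eq:lastest}) or (\ref{eq:lastest1}) for $\bar{\mathcal{K}}_1(t)$. Since $q>1$, $\mathcal{J}(\underline\alpha_j,t)=1-\lambda\eta(t)m_0\to0^+$ as $\eta\uparrow\eta_*$, and (\ref{eq:fin2}) contributes $\mathcal{J}(\underline\alpha_j,t)^{\frac1q-\frac1{\lambda p}-1}$; this exponent is negative precisely when $\lambda<\frac{q}{p(1-q)}$, so the term diverges. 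The point requiring care is that there is no leading-order cancellation inside the modulus in (\ref{eq:lower}): the competing contribution is either $\sim C$ or $\sim C\,\mathcal{J}(\underline\alpha_j,t)^{\frac1q-\frac1\lambda-1}$, and since $p>1$ and $\lambda<0$ one has $\frac1q-\frac1{\lambda p}-1<\frac1q-\frac1\lambda-1$, so as $\mathcal{J}(\underline\alpha_j,t)\to0^+$ the $\int_0^1\mathcal{J}^{-(1+1/\lambda p)}$-term strictly dominates and $\|u_x\|_p\to+\infty$. Finiteness of $t_*$ throughout is inherited from Theorem \ref{thm:lambdaneg}, and the closing remark about (\ref{eq:lemmaass1}) and (\ref{eq:req2lambdaneg1}) merely records the hypotheses under which (\ref{eq:lastest}) and (\ref{eq:fin1}) were obtained. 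I expect the main obstacle to be organisational rather than analytic: tracking which of the many overlapping parameter windows in Lemma \ref{lem:general} and in (\ref{eq:laste0}) is the active one for each $(\lambda,p,q)$ in the statement, and verifying in every lower-bound case that the two powers of $\mathcal{J}(\underline\alpha_j,t)$ never coincide, so that no cancellation spoils the divergence.
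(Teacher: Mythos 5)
Your proposal matches the paper's own argument essentially step for step: bounded prefactors from (\ref{eq:thirdint1}) and $\eta\uparrow\eta_*<\infty$, the upper bound (\ref{eq:upper}) with (\ref{eq:lastest}) and (\ref{eq:fin1}) for every finiteness claim, and the lower bound (\ref{eq:lower}) with (\ref{eq:reallast}), (\ref{eq:fin2}) and either (\ref{eq:lastest}) or (\ref{eq:lastest1}) for the divergence in part (3), including the same two-case split at $\lambda=\frac{q}{1-q}$ and the same dominance comparison of the exponents $\frac{1}{q}-\frac{1}{\lambda p}-1<\frac{1}{q}-\frac{1}{\lambda}-1$. The paper in fact omits the details of the upper-bound cases that you sketch, so your outline is, if anything, slightly more explicit than the published proof.
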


\subsubsection{Further regularity results for smooth initial data}
\label{subsubsec:arbitrarysmooth}

\begin{definition}
\label{def:order}
Suppose a smooth function $f(x)$ satisfies $f(x_0)=0$ but $f$ is not identically zero. We say $f$ has a zero of order $k\in\mathbb{N}$ at $x=x_0$ if 
$$f(x_0)=f'(x_0)=...=f^{(k-1)}(x_0)=0,\,\,\,\,\,\,\,\,\,\,\,\,f^{(k)}(x_0)\neq0.$$
\end{definition}
In \cite{Sarria1}, we examined a class of smooth, mean-zero, periodic initial data characterized by $u_0''(\alpha)$ having zeroes of order $k=1$ at the finite number of locations $\overline\alpha_i$ for $\lambda>0$, or at $\underline\alpha_j$ if $\lambda<0$, that is, $u_0'''(\overline\alpha_i)<0$ or $u_0'''(\underline\alpha_j)>0$. Consequently, in each case, we were able to use an appropriate Taylor expansion up to quadratic order to account for the local behaviour of $u_0'$ near these points. This approach, in turn, led to the results summarized in Theorems \ref{thm:sarria1} and \ref{thm:lpintro} of \S\ref{sec:intro}. Assuming the order $k$ of these particular zeroes, $\overline\alpha_i$ or $\overline\alpha_j$, of $u_0''$ is the same regardless of location, and noticing that $k\geq1$ must be odd due to $u_0'$ being even in a small neighbourhood of these points, we may use definition \ref{def:order} to generalize the results in \cite{Sarria1} to a larger class of smooth, mean-zero, periodic initial data characterized by $u_0''$ having zeroes of higher orders, $k=1,3,5,...$, at every $\overline\alpha_i$ if $\lambda>0$, or $\underline\alpha_j$ for $\lambda<0$. Since this corresponds to replacing $q$ in (\ref{eq:expnew0}) or (\ref{eq:expnew00}) by $k+1$, we obtain our results simply by substituting $q$ in Theorems \ref{thm:lambdapos}, \ref{thm:lplambdapos}, \ref{thm:lambdaneg} and \ref{thm:genlplambdaneg}, by $1+k$ in those cases where $q\geq2$. The results are summarized in Corollary \ref{coro:gensmooth} below. 

\begin{corollary}
\label{coro:gensmooth}
Consider the initial boundary value problem (\ref{eq:nonhomo})-(\ref{eq:pbc}) for smooth, mean-zero, periodic initial data. Furthermore,

\begin{enumerate}
\item Suppose $u_0''(\alpha)$ has a zero of order $k\geq1$ at every $\overline\alpha_i$, $i=1,2,...,m$. Then

\begin{itemize}

\item\label{it:globalgen} For $0\leq\lambda\leq\frac{1+k}{2}$, solutions exist globally in time. More particularly, these vanish as $t\uparrow t_*=+\infty$ for $0<\lambda<\frac{1+k}{2}$ but converge to a nontrivial steady state if $\lambda=\frac{1+k}{2}$.

\item\label{it:blow1gen} For $\frac{1+k}{2}<\lambda<+\infty$, there exists a finite $t_*>0$ such that both the maximum $M(t)$ and the minimum $m(t)$ diverge to $+\infty$ and respectively to $-\infty$ as $t\uparrow t_*$. Furthermore, $\lim_{t\uparrow t_*}u_x(\gamma(\alpha,t),t)=-\infty$ if $\alpha\notin\bigcup_{i,j}\{\overline\alpha_i\}\cup\{\underline\alpha_j\}$ and $\lim_{t\uparrow t_*}\left\|u_x\right\|_{p}=+\infty$ for all $p>1$.

\end{itemize}

\item Suppose $u_0''(\alpha)$ has a zero of order $k\geq1$ at each $\underline\alpha_j$, $j=1,2,...,n$. Then

\begin{itemize}

\item\label{it:lambdaneg1gen} For $-\frac{1+k}{k}<\lambda<0$, there exists a finite $t_*>0$ such that only the minimum diverges, $m(t)\to-\infty,$ as $t\uparrow t_*$, whereas, for $\frac{1+k}{1-p(1+k)}<\lambda<0$ and $p\geq1$, $\lim_{t\uparrow t_*}\left\|u_x\right\|_p<+\infty$.

\item\label{it:lambdaneg3gen} For $\lambda<-\frac{1+k}{k}$, there is a finite $t_*>0$ such that both $M(t)$ and $m(t)$ diverge to $+\infty$ and respectively to $-\infty$ as $t\uparrow t_*$. Additionally, $\lim_{t\uparrow t_*}u_x(\gamma(\alpha,t),t)=+\infty$ for $\alpha\notin\bigcup_{i,j}\{\overline\alpha_i\}\cup\{\underline\alpha_j\}$ and $\lim_{t\uparrow t_*}\left\|u_x\right\|_p=+\infty$ if $\lambda<-\frac{1+k}{pk}$ and $p>1$.

\end{itemize}

\end{enumerate}
\end{corollary}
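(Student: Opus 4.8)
The plan is to reduce Corollary \ref{coro:gensmooth} to the already‑established Theorems \ref{thm:lambdapos}, \ref{thm:lplambdapos}, \ref{thm:lambdaneg} and \ref{thm:genlplambdaneg} by showing that the smooth data in question falls into the class (\ref{eq:expnew0}) (respectively (\ref{eq:expnew00})) with the specific exponent $q=1+k$. The one genuinely new ingredient is a short Taylor‑expansion lemma; everything else is a matter of translating the parameter ranges.

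First I would prove the reduction. Suppose $u_0\in C^{\infty}$ and $u_0''$ has a zero of order $k\geq1$ at $\overline\alpha_i$ in the sense of Definition \ref{def:order}; this means $u_0''(\overline\alpha_i)=u_0'''(\overline\alpha_i)=\cdots=u_0^{(k+1)}(\overline\alpha_i)=0$ while $u_0^{(k+2)}(\overline\alpha_i)\neq0$. Taylor's theorem applied to $u_0'$ at $\overline\alpha_i$ then yields
$$u_0'(\alpha)=M_0+\frac{u_0^{(k+2)}(\overline\alpha_i)}{(k+1)!}\,(\alpha-\overline\alpha_i)^{k+1}+o\left(\lvert\alpha-\overline\alpha_i\rvert^{k+1}\right),\qquad M_0=u_0'(\overline\alpha_i),$$
as $\alpha\to\overline\alpha_i$. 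Since $\overline\alpha_i$ is a local maximum of the smooth, nonconstant function $u_0'$, the first non‑constant term in this expansion must have even order and negative coefficient; hence $k$ is odd, $(\alpha-\overline\alpha_i)^{k+1}=\lvert\alpha-\overline\alpha_i\rvert^{k+1}$, and with $C_1:=u_0^{(k+2)}(\overline\alpha_i)/(k+1)!<0$ we recover exactly (\ref{eq:expnew0}) with $q=1+k\geq2$. The same argument at $\underline\alpha_j$, where $u_0'$ has a local minimum, gives (\ref{eq:expnew00}) with $C_2:=u_0^{(k+2)}(\underline\alpha_j)/(k+1)!>0$ and the same $q=1+k$. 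Because all the $\overline\alpha_i$ (respectively $\underline\alpha_j$) are assumed to have zeroes of the same order $k$, the hypothesis (\ref{eq:expnew0}) (respectively (\ref{eq:expnew00})) holds globally with this common $q$.

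With the reduction in hand I would simply invoke the earlier theorems with $q=1+k$. For $\lambda\geq0$: Theorem \ref{thm:lambdapos}(\ref{it:global}) gives global existence, vanishing for $\lambda\in(0,(1+k)/2)$ and convergence to a nontrivial steady state at $\lambda=(1+k)/2$; since $q=1+k\geq2>1$, Theorem \ref{thm:lambdapos}(\ref{it:blow1}) and (\ref{it:blow4}) together cover the whole half‑line $\lambda>(1+k)/2$ (the ranges $(q/2,q)$ and $(q,+\infty)$) and yield the two‑sided, everywhere blow‑up, while Theorem \ref{thm:lplambdapos}(\ref{it:lplambdapos2}) upgrades this to $\lim_{t\uparrow t_*}\lVert u_x\rVert_p=+\infty$ for every $p>1$. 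For $\lambda<0$: note $q/(1-q)=(1+k)/(1-(1+k))=-(1+k)/k$, so Theorem \ref{thm:lambdaneg}(\ref{it:lambdaneg1}) together with the second clause of (\ref{it:lambdaneg2}) gives one‑sided, discrete blow‑up throughout $-(1+k)/k<\lambda<0$, Theorem \ref{thm:lambdaneg}(\ref{it:lambdaneg3}) gives two‑sided, everywhere blow‑up for $\lambda<-(1+k)/k$, and Theorem \ref{thm:genlplambdaneg}(\ref{it:plambdaneg3}) gives the $L^p$ statements upon substituting $q/(1-pq)=(1+k)/(1-p(1+k))$ and $q/(p(1-q))=-(1+k)/(pk)$.

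The computations above are all routine; the only point demanding any care is the Taylor/parity step, i.e.\ checking that the data really does satisfy (\ref{eq:expnew0})--(\ref{eq:expnew00}) with the claimed exponent and with the correct sign of the leading coefficient, so that the hypotheses of the cited theorems are genuinely met. A secondary, minor issue is that those theorems exclude a discrete set of parameter pairs through conditions (\ref{eq:lemmaass1}), (\ref{eq:req1lambdapos1}), (\ref{eq:req1lambdapos2}) and (\ref{eq:req2lambdaneg1}); since here $q=1+k$ is a fixed integer $\geq2$, these are automatically satisfied except possibly at isolated values of $\lambda$, which are recovered by the same continuity argument already used in \S\ref{subsubsec:integralestimates} and \S\ref{subsubsec:generalcaselambdapospfin}.
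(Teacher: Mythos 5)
Your proposal is correct and follows essentially the same route as the paper: \S\ref{subsubsec:arbitrarysmooth} likewise observes that a zero of order $k$ of $u_0''$ at $\overline\alpha_i$ (resp.\ $\underline\alpha_j$) forces, via Taylor expansion and the parity/sign constraint at a local extremum of $u_0'$, the local form (\ref{eq:expnew0}) (resp.\ (\ref{eq:expnew00})) with $q=1+k$, and then reads off the conclusions from Theorems \ref{thm:lambdapos}, \ref{thm:lplambdapos}, \ref{thm:lambdaneg} and \ref{thm:genlplambdaneg}. Your write-up actually makes the Taylor/parity step and the handling of the excluded parameter values (\ref{eq:lemmaass1}), (\ref{eq:req2lambdaneg1}) more explicit than the paper does.
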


\begin{remark}
\label{rem:exception}
It turns out that, unless the initial data is smooth, the results established in this paper for periodic boundary conditions extend to a Dirichlet setting. For smooth initial data, if there are $\overline\alpha_i\in\{0,1\}$ when $\lambda>0$, or $\underline\alpha_j\in\{0,1\}$ for $\lambda<0$, then the results in the periodic setting will extend to Dirichlet boundary conditions as long as $u_0''$ vanishes at those end-points. This last condition prevents a Lipschitz-type behaviour of $u_0'$ at the boundary, which could otherwise lead to finite-time blow-up from smooth initial data under Dirichlet boundary conditions. Details on this will be presented in an upcoming paper. Also, notice that letting $q\to+\infty$ in either (\ref{eq:expnew0}) or (\ref{eq:expnew00}) implies that $u_0'\sim M_0$ near $\overline\alpha_i$, or $u_0'\sim m_0$ for $\alpha\sim\underline\alpha_j$, respectively. Then, letting $k\to+\infty$ in Corollary \ref{coro:gensmooth}(\ref{it:globalgen}) implies that, for this particular class of locally constant $u_0'$, a solution that exists locally in time for any $\lambda\in\mathbb{R}$, will persist for all time. 
\end{remark}

\section{Examples}
\label{sec:examples}

Examples for Theorems \ref{thm:p=1}, \ref{thm:lambdapos} and \ref{thm:lambdaneg} are now presented. For simplicity, we consider initial data satisfying Dirichlet boundary conditions\footnote[6]{The reader may refer to \cite{Sarria1} for examples involving periodic, mean-zero data satisfying (\ref{eq:expnew0}), and/or (\ref{eq:expnew00}), for $q=2$ or in the limit as $q\to+\infty$.}
$$u(0,x)=u(1,t)=0,$$
and we note that (\ref{eq:mainsolu}) is equivalent to the representation formula (see \cite{Sarria1})
\begin{equation}
\label{eq:finalsolu}
\begin{split}
u_x(\gamma(\alpha,t),t)=\frac{1}{\bar{\mathcal{K}}_0(t)^{^{2\lambda}}}\left(\frac{u_0'(\alpha)}{\mathcal{J}(\alpha,t)}-\frac{1}{\bar{\mathcal{K}}_0(t)}\int_0^1{\frac{u_0'(\alpha)d\alpha}{\mathcal{J}(\alpha,t)^{1+\frac{1}{\lambda}}}}\right).
\end{split}
\end{equation}
For several choices of $\lambda\in\mathbb{R}$, the time-dependent integrals in (\ref{eq:finalsolu}) are evaluated and pointwise plots are generated using \textsc{Mathematica}. Whenever possible, plots in the Eulerian variable $x,$ instead of the Lagrangian coordinate $\alpha,$ are provided. For practical reasons, details of the computations in most examples are omitted. Also, due to the difficulty in solving for the time variable $t$ through the IVP (\ref{eq:etaivp}) for $\eta(t)$, most plots for $u_x(\gamma(\alpha,t),t)$ are against the variable $\eta$ rather than $t$.

Example 1 below applies to stagnation point-form (SPF) solutions to the incompressible 3D Euler equations ($\lambda=1/2$). We consider two types of data, one satisfying (\ref{eq:expnew0}) for $q\in(0,1)$, and other having $q>1$. Recall from Table \ref{table:thecase} that if $q\geq1$, global existence in time follows, while, for $q\in(1/2,1)$, finite-time blow-up occurs. Below, we see that a spontaneous singularity may also form if $q=1/3$.

\textbf{Example 1.\, Regularity of SPF solutions to 3D Euler for $q=1/3$ and $q=6/5$} 

First, for $\lambda=1/2$ and $\alpha\in[0,1]$, let
\begin{equation}
\label{eq:data3d}
\begin{split}
u_0(\alpha)=\alpha(1-\alpha^{\frac{1}{3}}).
\end{split}
\end{equation}
Then $u_0'(\alpha)=1-\frac{4}{3}\alpha^{\frac{1}{3}}$ achieves its maximum $M_0=1$ at $\overline\alpha=0$. Also, $q=1/3$, $\eta_*=2$, and $u_0'(\alpha)\notin C^1(0,1)$, i.e. $\lim_{\alpha\downarrow0}u_0''(\alpha)=-\infty$; a jump discontinuity of infinite magnitude in $u_0''$. Evaluating the integrals in (\ref{eq:finalsolu}), we obtain
\begin{equation}
\label{eq:3d1}
\begin{split}
\bar{\mathcal{K}}_0(t)=-\frac{54(\eta(t)-6)\eta(t)-81(2-\eta(t))(6+\eta(t))\,\text{arctanh}\left(\frac{2\eta(t)}{\eta(t)-6}\right)}{4(6+\eta(t))\eta(t)^3}
\end{split}
\end{equation}
and
\begin{equation}
\label{eq:3d2}
\begin{split}
&\int_0^1{\frac{u_0'(\alpha)\,d\alpha}{\mathcal{J}(\alpha,t)^{3}}}=-\frac{27\left(9(2-\eta(t))(6+\eta(t))^2\,\log\left(\frac{24}{\eta(t)+6}-3\right)\right)}{8(6+\eta(t))^2\eta(t)^4}
\\
&\,\,\,\,\,\,\,\,\,\,\,\,\,\,\,\,\,\,\,\,\,-\frac{27\left(8\eta(t)(54-(\eta(t)-9)\eta(t))+6\eta(t)(6+\eta(t))^2\,\text{arctanh}\left(\frac{2\eta(t)}{\eta(t)-6}\right)\right)}{8(6+\eta(t))^2\eta(t)^4}
\end{split}
\end{equation}
for $0\leq\eta<2$. Furthermore, in the limit as $\eta\uparrow\eta_*=2$, $\bar{\mathcal{K}}_0(t_*)=27/16$
whereas $\int_0^1{\frac{u_0'(\alpha)\,d\alpha}{\mathcal{J}(\alpha,t)^{3}}}\to+\infty.$ Also, (\ref{eq:etaivp}) and (\ref{eq:3d1}) yield
$$t(\eta)=-\frac{9\left(2\eta(6-5\eta)+9(\eta-2)^2\text{arctanh}\left(\frac{2\eta}{\eta-6}\right)\right)}{16\eta^2},$$
so that $t_*=\lim_{\eta\uparrow2}t(\eta)=9/4.$ Using (\ref{eq:3d1}) and (\ref{eq:3d2}) on (\ref{eq:finalsolu}), we find that $u_x(\gamma(\alpha,t),t)$ undergoes a two-sided, everywhere blow-up as $t\uparrow 9/4$. 

Next, replace $q=1/3$ in (\ref{eq:data3d}) by $q=6/5$. Then, $u_0'(\alpha)=1-\frac{11}{5}\alpha^{\frac{6}{5}}$ so that $u_0''$ is now defined as $\alpha\downarrow0$. Also, for this data, both integrals now diverge to $+\infty$ as $\eta\uparrow2$. Particularly, this causes a balancing effect amongst the terms in (\ref{eq:finalsolu}) that was previously absent when $q=1/3$. Ultimately, we find that as $t\to t_*=+\infty$, $u_x(\gamma(\alpha,t),t)\to0$ for every $\alpha\in[0,1]$. See Figure \ref{fig:ex7} below.

\begin{center}
\begin{figure}[!ht]
\includegraphics[scale=0.33]{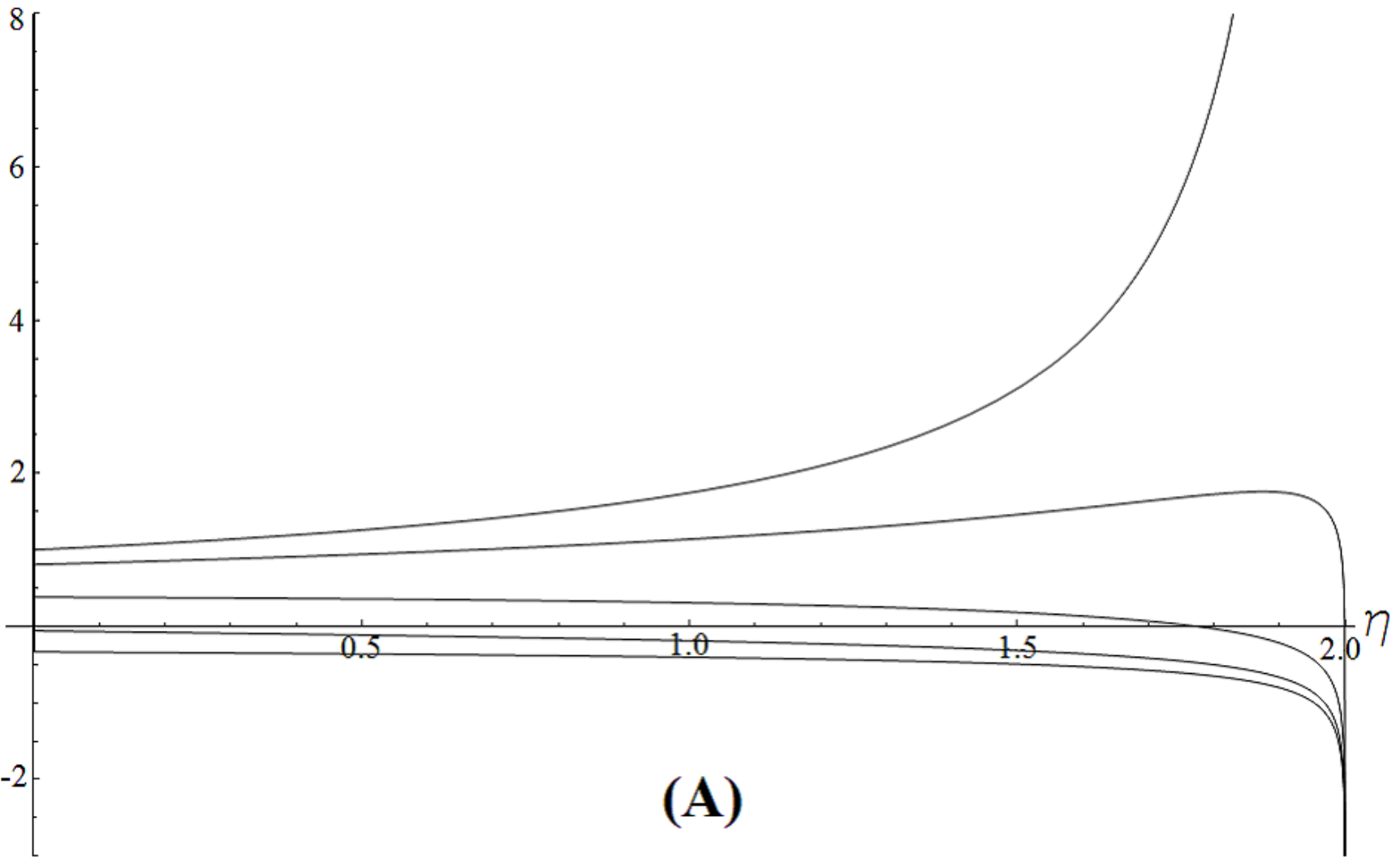}
\includegraphics[scale=0.32]{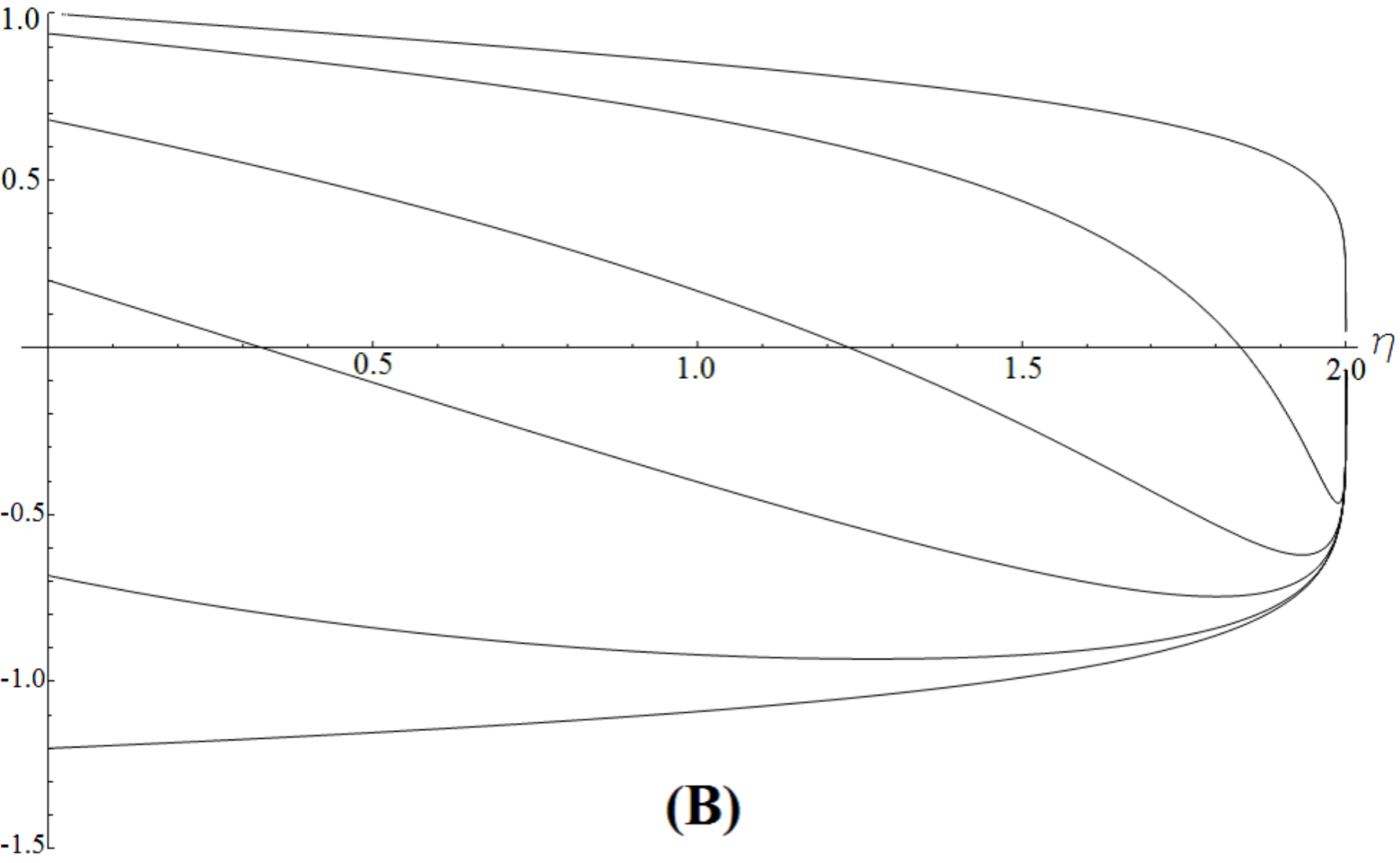}\\
\caption{Example 1 for $\lambda=1/2$ and $q\in\{1/3,6/5\}$. Figure $A$ depicts two-sided, everywhere blow-up of $u_x(\gamma(\alpha,t),t)$ for $q=1/3$ as $\eta\uparrow 2$ ($t\uparrow9/4$), whereas, for $q=6/5$, Figure $B$ represents its vanishing as $\eta\uparrow 2$ ($t\to+\infty$).}
\label{fig:ex7}
\end{figure}
\end{center}

In \cite{Sarria1} (see Theorem \ref{thm:sarria1} in \S\ref{sec:intro}), we showed that for a class of smooth, periodic initial data $(q=2)$, finite-time blow-up occurs for all $\lambda>1$. Example 2 below is an instance of Theorem \ref{thm:lambdapos}(\ref{it:global}). For $\lambda\in\{2,5/4\}$, we consider initial data satisfying (\ref{eq:expnew0}) for $q\in\{5,5/2\}$, respectively, and find that solutions persist globally in time. Also, the example illustrates the two possible global behaviours: convergence of solutions, as $t\to+\infty$, to nontrivial or trivial steady states. 

\textbf{Example 2.\, Global existence for $\lambda=2$, $q=5$ and $\lambda=5/4$, $q=5/2$}

First, let $\lambda=2$ and 
\begin{equation}
\label{eq:dat00}
\begin{split}
u_0(\alpha)=\alpha(1-\alpha^5).
\end{split}
\end{equation}
Then $u_0^\prime(\alpha)=1-6\alpha^5$ achieves its greatest value $M_0=1$ at $\overline\alpha=0$ and $\eta_*=1/2$. Since $\lambda=2\in[0,5/2)=[0,q/2)$, Theorem \ref{thm:lambdapos}(\ref{it:global}) implies global existence in time. Particularly, $u_x(\gamma(\alpha,t),t)\to0$ as $t\to+\infty.$ See Figure \ref{fig:ex34}$(A)$. Now, suppose $\lambda=5/4$ and replace $q=5$ in (\ref{eq:dat00}) by $q=5/2$. Then, $u_0'(\alpha)=1-\frac{7}{2}\alpha^{5/2}$ attains $M_0=1$ at $\overline\alpha=0$ and $\eta_*=4/5$. Because $\lambda=5/4=q/2$, Theorem \ref{thm:lambdapos}(\ref{it:global}) implies that $u_x$ converges to a nontrivial steady-state as $t\to+\infty$. See Figure \ref{fig:ex34}$(B)$.

\begin{center}
\begin{figure}[!ht]
\includegraphics[scale=0.37]{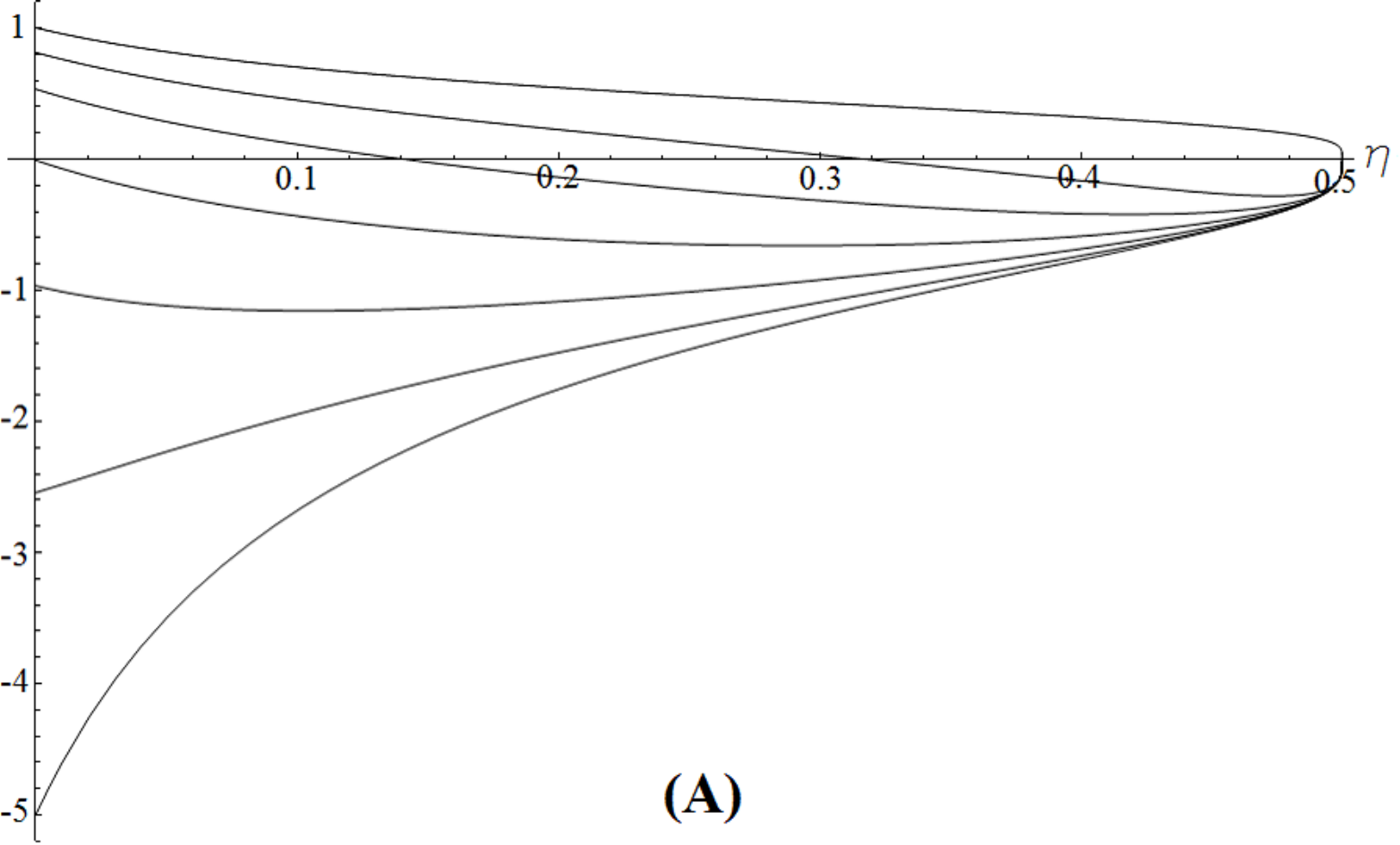}
\includegraphics[scale=0.34]{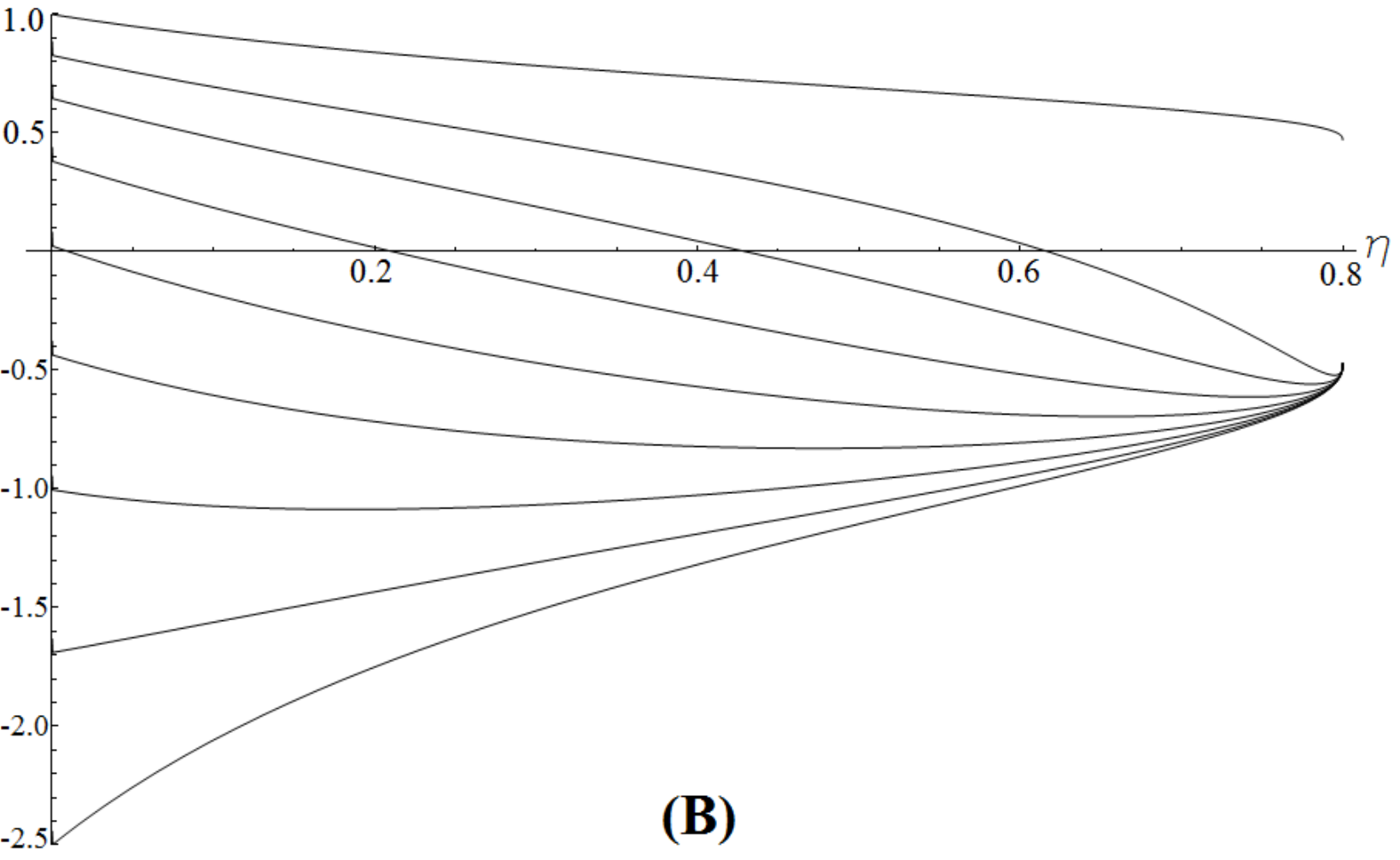} 
\caption{For example 2, Figure $A$ represents the vanishing of $u_x(\gamma(\alpha,t),t)$ as $\eta\uparrow1/2$ ($t\to+\infty$) for $\lambda=2$ and $q=5$, whereas, Figure $B$ illustrates its convergence to a nontrivial steady state as $\eta\uparrow4/5$ ($t\to+\infty$) if $q=5/2$ and $\lambda=5/4=q/2$.}
\label{fig:ex34}
\end{figure}
\end{center}

\textbf{Example 3.\, Two-sided, everywhere blow-up for $\lambda=\frac{11}{2}$ and $q=6$.}

Suppose $\lambda=11/2$ and $u_0(\alpha)=\frac{\alpha}{11}(1-\alpha^6)$. Then, $u_0'(\alpha)=\frac{1}{11}(1-7\alpha^6)$ attains its greatest value $M_0=1/11$ at $\overline\alpha=0.$ Also, $\eta_*=2$ and $\lambda=11/2\in(q/2,q)$. According to Theorem \ref{thm:lambdapos}(\ref{it:blow1}), two-sided, everywhere finite-time blow-up occurs. The estimated blow-up time is $t_*\sim 22.5.$ See Figure \ref{fig:ex340}$(A)$.

\textbf{Example 4.\, One-sided, discrete blow-up for $\lambda=-5/2$ and $q=3/2$}

Let $\lambda=-5/2$ and $u_0(\alpha)=\alpha(\alpha^{\frac{3}{2}}-1)$. Then $u_0'$ attains its minimum $m_0=-1$ at $\underline\alpha=0$ and $\eta_*=2/5.$ Since $\frac{q}{1-q}<\lambda<-1,$ Theorem \ref{thm:lambdaneg}(2) implies one-sided, discrete finite-time blow-up and $t_*\sim0.46$. See Figure \ref{fig:ex340}$(B)$. We remark that in \cite{Sarria1}, the same value for $\lambda$ with smooth, periodic initial data, and $q=2$ led to two-sided, everywhere blow-up instead.

\begin{center}
\begin{figure}[!ht]
\includegraphics[scale=0.34]{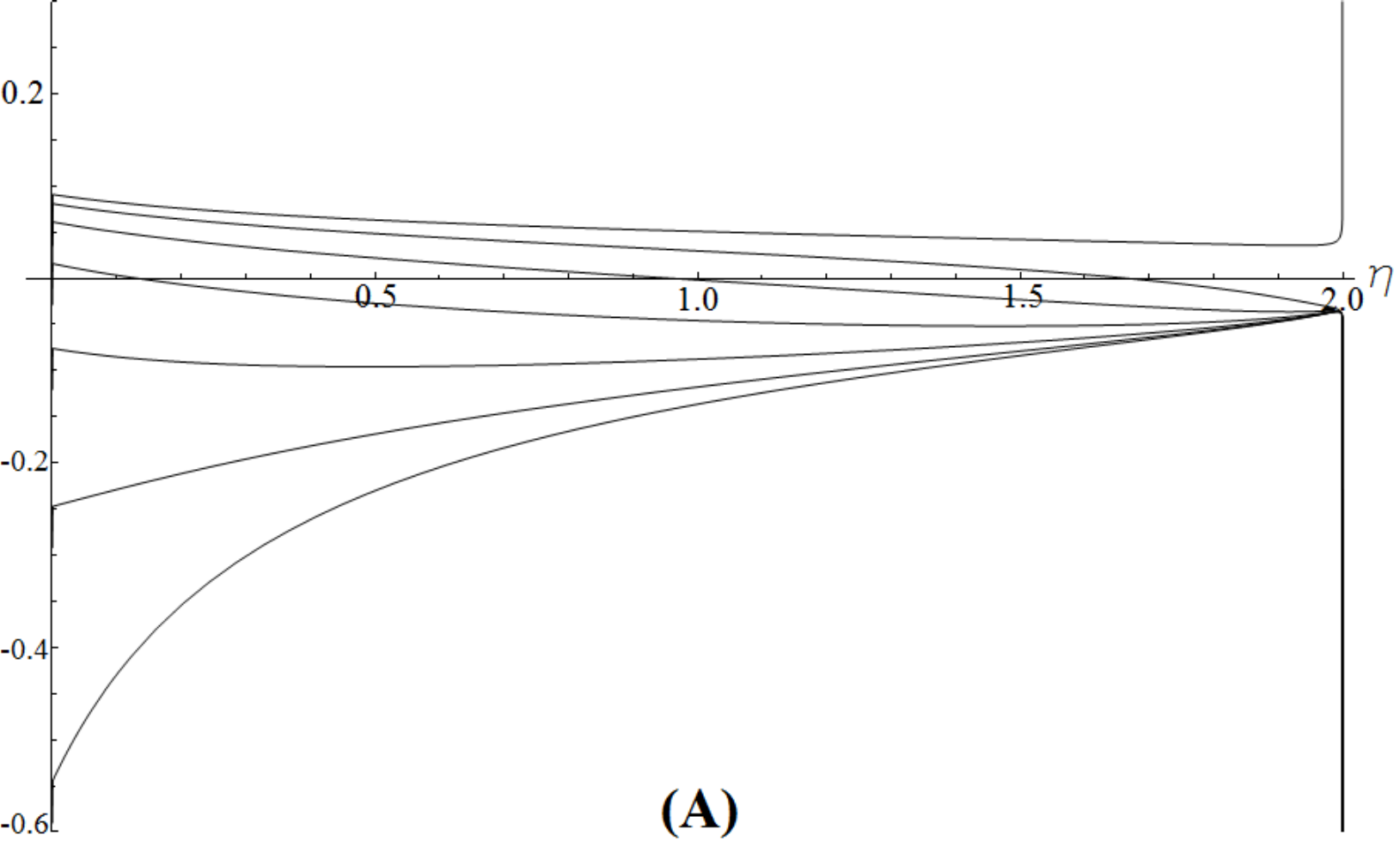} 
\includegraphics[scale=0.33]{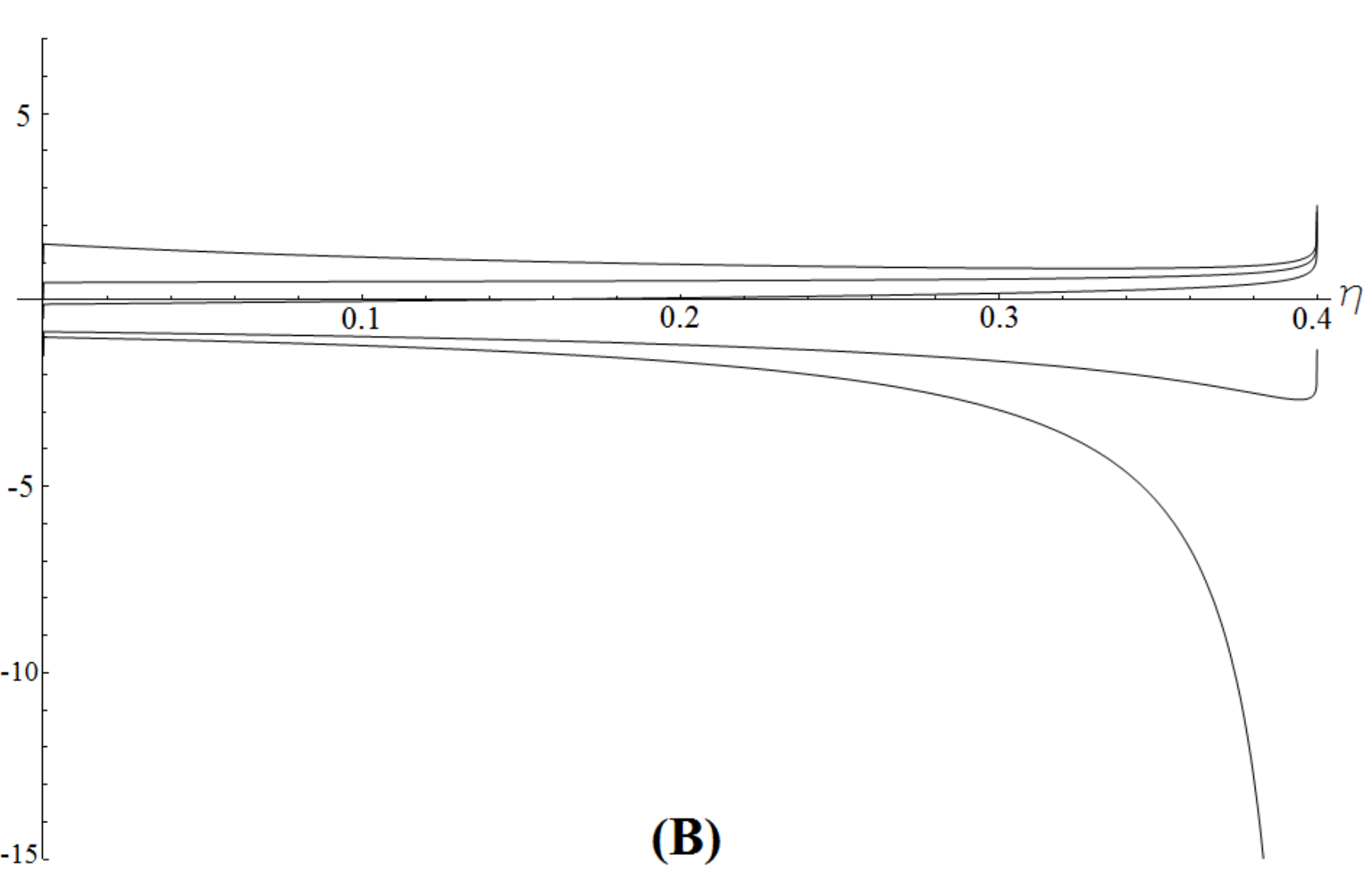}
\caption{Figure $A$ for example 3 depicts two-sided, everywhere blow-up of $u_x(\gamma(\alpha,t),t)$ as $\eta\uparrow2$ $(t\uparrow22.5)$ for $\lambda=11/2$ and $q=6$, while, Figure $B$ for example 4 illustrates one-sided, discrete blow-up, $m(t)=u_x(0,t)\to-\infty$, as $\eta\uparrow2/5$ ($t\uparrow t_*\sim0.46$) for $\lambda=-5/2$ and $q=3/2$.}
\label{fig:ex340}
\end{figure}
\end{center}

In these last two examples, we consider smooth data with either mixed local behaviour near two distinct locations $\underline\alpha_j$ for $\lambda=-1/3$, or $M_0$ occurring at both endpoints for $\lambda=1$.

\textbf{Example 5.\, One-sided, discrete blow-up for $\lambda=-1/3$ and $q=1,2$.} 
								
For $\lambda=-1/3$, let $$u_0(\alpha)=\alpha(1-\alpha)(\alpha-\frac{3}{4})\left(\alpha-\frac{1+4\sqrt{22}}{36}\right).$$
Then $m_0\sim-0.113$ occurs at both $\underline\alpha_1=1$ and $\underline\alpha_2=\frac{4+\sqrt{22}}{24}\sim0.36.$ Now, near $\underline\alpha_2$, $u_0'$ behaves quadratically ($q=2$), whereas, for $1-\alpha>0$ small, it behaves linearly ($q=1$). The quadratic behaviour is due to $u_0''$ having zero of order one at $\underline\alpha_2\sim0.36$, thus, Corollary \ref{coro:gensmooth} implies a discrete, one-sided blow-up. Similarly in the case of linear behaviour according to Theorem \ref{thm:p=1}. After evaluating the integrals, we find that $m(t)\to-\infty$ as $t\uparrow t_*\sim17.93$. Due to the Dirichlet boundary conditions, we have that $\gamma(0,t)\equiv0$ and $\gamma(1,t)\equiv1$ for a s long a s $u$ is defined. Then, one blow-up location is given by the boundary $\underline{x}_1=1$, while the interior blow-up location, $\underline x_2$, is obtained by integrating (\ref{eq:sum}). This yields the characteristics:
$$\gamma(\alpha,t)=\int_0^{\alpha}{\frac{dy}{\mathcal{J}(y,t)^{\frac{1}{\lambda}}}}\left(\int_0^{1}{\frac{d\alpha}{\mathcal{J}(\alpha,t)^{\frac{1}{\lambda}}}}\right)^{-1}.$$
Setting $\alpha=\alpha_2$ and letting $\eta\uparrow\eta_*=\frac{3}{\left|m_0\right|}$, we find that $\underline{x}_2\sim0.885$. See Figure \ref{fig:weird}$(A)$.

\textbf{Example 6.\, Two-sided, everywhere blow-up of SPF solutions to 2D Euler ($\lambda=1$) for $q=1$.} 

For $\lambda=1$, let $u_0(\alpha)=\alpha(\alpha-1)(\alpha-1/2).$ Then, $M_0=1/2$ occurs at both endpoints $\overline\alpha_i=\{0,1\}$. Also $\eta_*=2$ and since
$$u_0'(\alpha)=M_0-3\alpha+3\alpha^2=M_0-3\left|\alpha-1\right|+3(\alpha-1)^2,$$
the local behaviour of $u_0'$ near both endpoints is linear ($q=1$). The integrals in (\ref{eq:finalsolu}) evaluate to
$$\bar{\mathcal{K}}_0(t)=\frac{2\,\text{arctanh}(y(t))}{\sqrt{3\eta(t)(4+\eta(t))}},\,\,\,\,\,\,\,\,\,\,\int_0^1{\frac{u_0'(\alpha)\,d\alpha}{\mathcal{J}(\alpha,t)^2}}=\frac{\,\,d\bar{\mathcal{K}}_0(t)}{d\eta}$$
for $0\leq\eta<2$ and $y(t)=\frac{\sqrt{3\eta(t)(4+\eta(t))}}{2(1+\eta(t))}$. Using the above on (\ref{eq:finalsolu}), we find that $M(t)=u_x(0,t)=u_x(1,t)\to+\infty$ as $\eta\uparrow2$, while $u_x(x,t)\to-\infty$ for all $x\in(0,1)$. The blow-up time is estimated from (\ref{eq:etaivp}) and $\bar{\mathcal{K}}_0(t)$ above as $t_*\sim2.8$. See Figure \ref{fig:weird}$(B)$.

\begin{center}
\begin{figure}[!ht]
\includegraphics[scale=0.34]{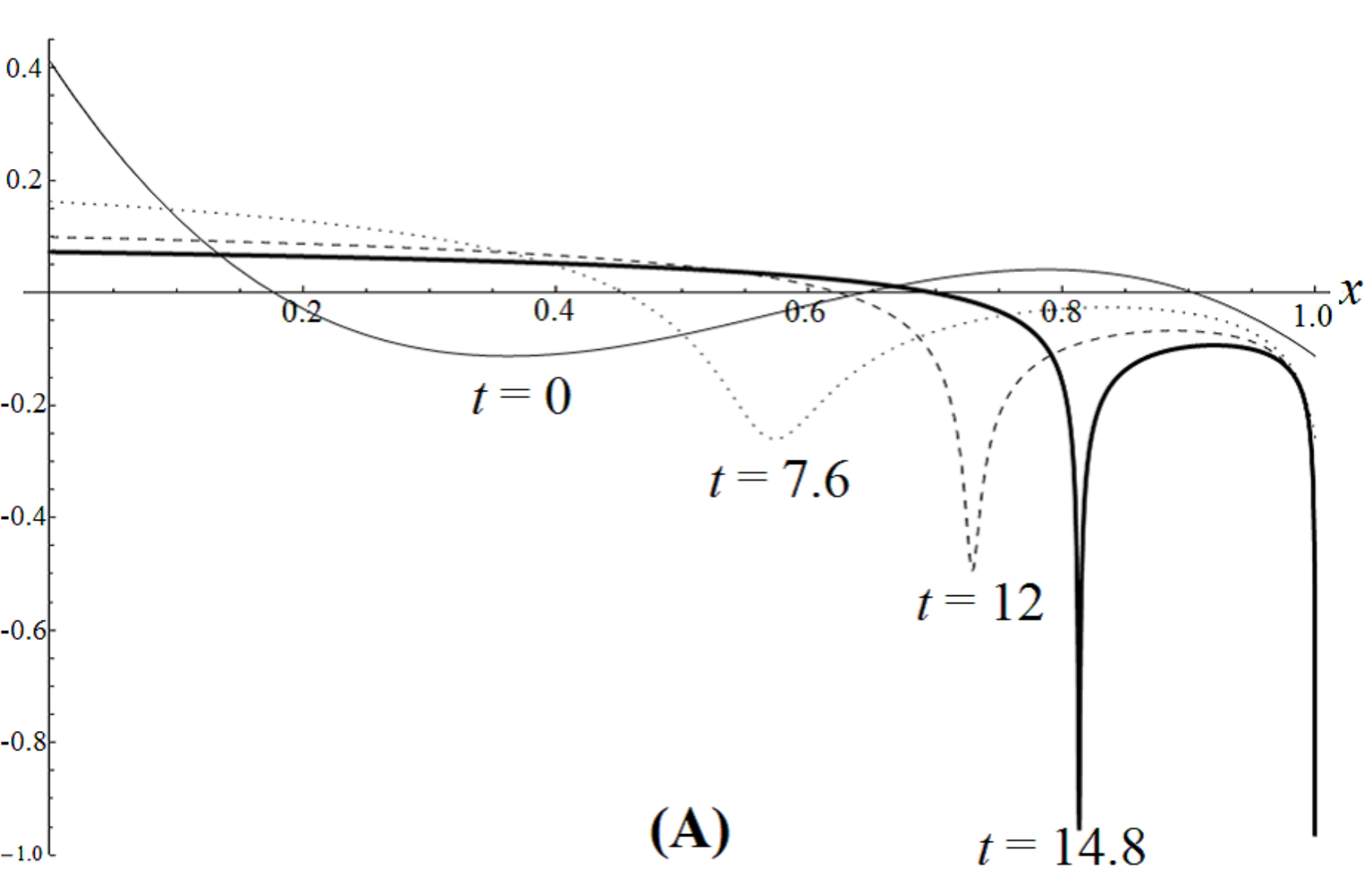}
\includegraphics[scale=0.33]{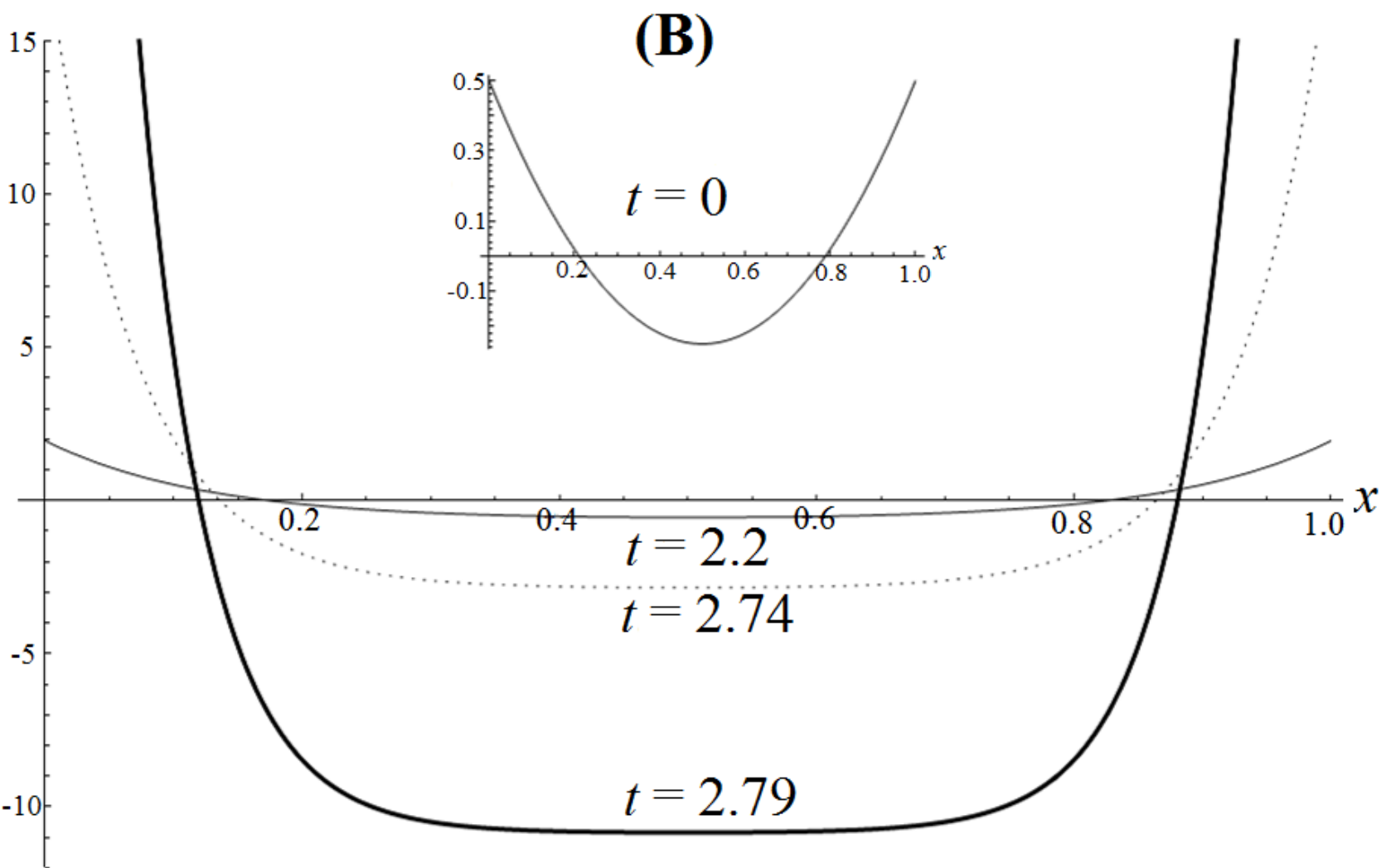}
\caption{Figure $A$ for example 5 with $\lambda=-1/3$ and $q=1,2$, depicts one-sided, discrete blow-up, $m(t)=\to-\infty$, as $t\uparrow17.93$. The blow-up locations are $\underline{x}_1=1$ and $\underline{x}_2\sim0.885$. Then, Figure $B$ for example 6 with $\lambda=1$ and $q=1$, represents two-sided, everywhere blow-up of $u_x(x,t)$, as $t\uparrow2.8$.}
\label{fig:weird}
\end{figure}
\end{center}


\end{document}